\DeclareMathOperator{\sgn}{sgn}
\DeclareMathOperator{\Ai}{Ai}
\DeclareMathOperator{\Bi}{Bi}
\newtheorem{theorem}{Theorem}[section]
\newtheorem{corollary}[theorem]{Corollary}
\newtheorem{lemma}[theorem]{Lemma}
\newtheorem{proposition}[theorem]{Proposition}
\newtheorem{remark}[theorem]{Remark}
\theoremstyle{definition}
\renewcommand*{\theequation}{\ifnumgreater{\value{section}}{0}{\thesection.}{\thesection.}\arabic{equation}}%
\newcommand{\beq}{\begin{equation}}  
\newcommand{\eeq}{\end{equation}}  
\newcommand\dd{\mathrm{d}}
\newcommand{\Z}{{\mathbb Z}}
\newcommand{\C}{{\mathbb C}}
\newcommand{\rd}{\mathrm{d}}
\newcommand{\rh}{\mathrm{h}}
\newcommand{\rp}{\mathrm{p}}
\newcommand\la{{\lambda}}
\newcommand\ka{{\kappa}}
\newcommand\al{{\alpha}}
\newcommand\be{{\beta}}
\newcommand\ze{{\zeta}}
\newcommand{\si}{\sigma}
\title{Confluent Darboux transformations 
and Wronskians for algebraic solutions of 
the Painlev\'e III ($D_7$) equation} 
\author{J.W.E. Harrow  and A.N.W. Hone\footnote{Corresponding author e-mail: A.N.W.Hone@kent.ac.uk}~\\
School of Mathematics, Statistics \& Actuarial Science~\\
    University of Kent, Canterbury CT2 7NF, UK}
\begin{document}

\maketitle

\begin{abstract} 
We describe the use of confluent Darboux transformations for Schr\"odinger operators, and how they give rise to explicit Wronskian 
formulae for certain algebraic solutions of 
Painlev\'e equations. As a preliminary illustration, we briefly describe how the 
 Yablonskii-Vorob'ev polynomials arise in this way, thus providing well-known expressions for the tau functions of the rational solutions of the Painlev\'e II equation. We then proceed to apply the method to obtain the main result, namely a new  Wronskian representation  for the Ohyama polynomials, which correspond to the algebraic solutions  of the Painlev\'e III equation of type $D_7$. 
\end{abstract}
% \begin{center}
%     \vspace{0.9cm}
%     \textbf{Abstract}\\
%     To be written. 
% \end{center}

\section{Introduction}

In its original form as discovered by Darboux 
\cite{DarbouxOriginal}, the Darboux transformation is a relation between the solutions and coefficients of a pair of linear ordinary differential equations of second order. Hence, in the case of a  1D Schr\"odinger equation, it gives a  
technique for obtaining a new  eigenfunction and potential, starting from some initially known eigenfunction and potential. As such, this technique has led to a powerful algebraic approach for constructing families of exactly solvable potentials, within the framework of supersymmetric quantum mechanics \cite{susyqm}. Furthermore, the generalization of the Darboux transformation to the case of higher order linear operators, or compatible matrix linear systems (zero curvature equations), has resulted in an extremely effective tool for 
deriving explicit solutions of integrable 
nonlinear partial differential equations (PDEs), including soliton solutions \cite{matveevsalle}. 

The focus of this article is on explicit solutions of  nonlinear ordinary differential  equations (ODEs), rather than PDEs. Specifically, here we will be concerned with solutions of Painlev\'e equations, which are second order  ODEs of the general form 
\beq\label{painleveeqn}
\frac{\dd^2 q}{\dd z^2} = F\left(z, q, \frac{\dd q}{\dd z}\right) , 
\eeq 
where the function $F$ on the right-hand side is rational in the first derivative of $q$, algebraic in $q$, and analytic in $z$, having the property that all solutions are meromorphic away from a finite number of fixed critical points (which are determined by the equation itself). Up to certain coordinate transformations, the Painlev\'e equations are classified into six canonical forms, referred to as Painlev\'e I-VI, given by particular functions $F$ depending on certain  parameters, which are at most four in number.  (See chapter XIV in \cite{ince} for details of this classification.)

It is known that the general solution of each Painlev\'e equation is a higher transcendental function, which cannot be expressed in terms of simpler functions, e.g. elliptic functions or classical special functions given by solutions of linear ODEs. For this reason, the solutions of Painlev\'e equations should be regarded as quintessentially nonlinear special functions, which provide solutions of many fundamental problems appearing in a diverse areas of application, including probability theory, random matrices, quantum gravity, orthogonal polynomials, and asymptotics of PDEs (see \cite{pac} and references therein). 
However, there are certain parameter values for which Painlev\'e equations admit special solutions that are expressed in terms of simpler functions. For instance, the 
Painlev\'e II equation, which is usually written in the form  
\beq\label{pii}
\frac{\dd^2 q}{\dd z^2} =2q^3 + zq+\al,  
\eeq 
has a set  of particular  solutions $q_n(z)$  
for integer parameter values $\al = n\in\Z$, 
given by the sequence of rational functions  
with $q_{-n}(z)=-q_n(z)$, as 
shown in Table \ref{piirat}; and for half-integer values of $\alpha$, it has a 1-parameter family of special solutions that can be written in terms of Airy functions and their derivatives. % below. 
%\beq\label{piirat}
%\al =0: \, q=0, \quad \al=1:\, q = -\frac{1}{z}, 
%\quad \al=2:\, q = \frac{1}{z}- \frac{3z^2}{z^3+4}, 
%\quad \al=3: \, q =  \frac{3z^2}{z^3+4}- \frac{6z^2(z^3+10)}{z^6+20z^3-80}, \quad\ldots \quad, 
%\eeq 

\begin{table}[]
    \centering 
\begin{tabular}{ |c||c|c|c|c| c| }
 \hline &&&&&
 \\
   $\al$ & \quad 0 \quad & \quad 1 \quad & 2 & 3 & 4 \\
 &&&&&\\\hline
 &&&&&\\
 $q$ & 0 & $-\tfrac{1}{z}$ & 
 $\frac{1}{z}- \frac{3z^2}{z^3+4}$ & 
  $\frac{3z^2}{z^3+4}- \frac{6z^2(z^3+10)}{z^6+20z^3-80}$ 
  & $\frac{6z^2(z^3+10)}{z^6+20z^3-80}- 
  \frac{10(z^9+42z^6+1120)}{z(z^9+60z^6+11200)}$
  \\
  &&&&& \\
 \hline
\end{tabular}
    \caption{Rational solutions of Painlev\'e II.}
    \label{piirat}
\end{table}

The standard way to obtain sequences of solutions such as these %(\ref{piirat}) 
is via 
the application of B\"acklund transformations (BTs), which are discrete symmetries of 
Painlev\'e equations that map solutions to solutions, while changing the parameters. In the case of Painlev\'e II, there are two independent symmetries of this kind, given by 
\beq\label{piibts} \begin{array}{rll}
S: & q\mapsto - q,  \qquad  \qquad\qquad & \al \mapsto -\al \\
T: & q\mapsto \tilde{q}=q+ \big(\al+\tfrac{1}{2}\big)
/\big(\frac{\dd q }{\dd z}+q^2+\tfrac{z}{2}\big), &
 \al\mapsto -\al-1.   
\end{array}
\eeq
The composition $S\circ T$ sends 
$q\mapsto -\tilde{q}$, maps the 
parameter $\al\mapsto \al +1$, and generates the 
sequence  in Table \ref{piirat}, starting from the seed solution $y=0$ when $\al=0$. It has been known since the work of Okamoto %for a long time 
that the BTs of each   
Painlev\'e equation are associated with birational actions of an (extended) affine Weyl group on the space of initial conditions, 
with the parameters corresponding to root variables (see \cite{okamoto}, for instance).  %symmetries
It was subsequently found by Sakai 
that the space of initial conditions for 
both continuous and discrete Painlev\'e equations can be identified with a 
smooth rational surface ${\cal X}$, whose anti-canonical class determines a pair of orthogonal  affine root subsystems inside the $E_8^{(1)}$ root lattice: one of these root subsystems corresponds to % 
the exceptional divisors obtained via a blowing up procedure, and determines the surface type; while the other is associated with the affine Weyl group that determines the discrete symmetries of the equation (and hence its BTs).    
%corresponds to 
%the affine: 
For the Painlev\'e II equation (\ref{pii}), the surface $\cal X$ is of  type  $E_7^{(1)}$, while the symmetry type is $A_1^{(1)}$, so that in particular there is only one parameter $\al$,  corresponding to  the fact that the $A_1$ root system has rank 1. %  above example of  

Another essential aspect of Okamoto's work
was the representation of Painlev\'e equations 
as non-autonomous Hamiltonian systems, and the tau function associated with this representation. In the case of Painlev\'e II, we have a variable $p$ which is the canonically conjugate momentum associated with $q$, and the pair of Hamilton's equations 
\beq\label{hampii}
\begin{array}{rclcc}
    q' & = &-q^2-p-\tfrac{z}{2} %\equiv
   & =&\frac{\partial h }{\partial p}   \\
    p' & = & 2pq-\ell & = & - \frac{\partial h }{\partial q}, 
\end{array}
\eeq
where the prime denotes the time evolution ($z$ derivative), while  
the Hamiltonian $h$ and auxiliary parameter $\ell$ are defined by
\beq\label{piihell} 
h = -\tfrac{1}{2}p^2-pq^2
-\tfrac{z}{2}p+\ell q, \qquad 
\ell =\al +\tfrac{1}{2}. 
\eeq
Regarding the Hamiltonian $h=h(q,p,z)$ 
for Painlev\'e II, evaluated on a solution $q(z),p(z)$ 
of the system (\ref{hampii}), as a function of the independent variable $z$, Okamoto's tau function $\tau(z)$ is defined (up to an overall constant multiplier) by 
\beq\label{taudef}
h(z) = -\frac{\dd}{\dd z}\log \uptau(z).
\eeq 
The Painlev\'e property for the equation (\ref{pii}) is equivalent to the statement that the tau function $\uptau(z)$ is holomorphic. Indeed, the birational action of $S\circ T$ on the extended phase space  coordinates $(q,p,z,\ell)$ is  
%of the and parameter 
\beq\label{birat}
S\circ T: \qquad \begin{cases}
    q & \mapsto \, -q +\frac{\ell}{p}\\
    p  &\mapsto \, -p-2(q-\frac{\ell}{p})^2-z \\
    z &\mapsto \, z \\
    \ell &\mapsto \, \ell+1 ,
\end{cases} 
\eeq
which preserves a contact 2-form $\Omega$, that is 
$$ 
(S\circ T)^*\Omega = \Omega, \qquad \Omega = \dd p \wedge \dd q -
\dd h\wedge \dd z, 
$$
and if the variables, Hamiltonians and tau functions are indexed %labelled 
by the parameter $\ell$, then 
$$ h_{\ell}=(S\circ T)^*(h_{\ell-1}) =h_{\ell-1}+q_{\ell}, $$
so that the solution of Painlev\'e II is 
given by the logarithmic derivative 
of a ratio of tau functions, as 
\beq\label{piitauq}
q_\ell (z)= \frac{\dd}{\dd z}\log \left(\frac{\uptau_{\ell-1}(z)}{\uptau_\ell(z)}\right), 
\eeq
while comparing (\ref{taudef}) with (\ref{piihell}) and using the fact 
that
$$
h_\ell '(z) = \frac{\partial}{\partial z} h_\ell (q,p,z), 
$$
the corresponding conjugate momentum variable 
is given by 
\beq\label{pltau}
p_\ell (z) = 2 \frac{\dd^2}{\dd z^2}\big( \log \uptau_\ell (z) \big). 
\eeq
Hence the solution of 
% brnwritten in terms of the 
(\ref{pii}) has simple poles at the places where one of the adjacent tau functions 
$\uptau_{\ell-1}, \uptau_\ell$ has a zero, 
while $p_\ell$ has double poles where $\uptau_\ell$ has a zero. 

For the particular sequence of rational solutions in  
Table \ref{piirat}, a determinantal representation was found by Kajiwara and Ohta, by making a scaling reduction from the rational solutions of the KP hierarchy. 
The following is a slightly adapted version of the statement in \cite{ko}.   
\begin{theorem}\label{piiratthm}
For the values $\ell=n+\sfrac{1}{2}$ of the parameter in the Hamiltonian (\ref{piihell}), 
the Painlev\'e II equation (\ref{pii}) 
has rational solutions 
given by 
$$ 
q_{n+\sfrac{1}{2}} = \frac{\dd}{\dd z}\log \left(\frac{\uptau_{n-\sfrac{1}{2}}(z)}{\uptau_{n+\sfrac{1}{2}}(z)}\right), 
$$
where the tau function associated with $h_{n+\sfrac{1}{2}}$ is given by 
\beq\label{piidet} \uptau_{n+\sfrac{1}{2}}(z)
= \exp\left(-\frac{z^3}{24}\right) \, 
\left| \begin{array}{cccc}
\rp_{1}(z) & \rp_{3}(z) & \cdots & \rp_{2n-1}(z) \\ 
\rp_{0}(z) & \rp_{2}(z) & \cdots & \rp_{2n-2}(z) \\
\vdots & \vdots & \ddots & \vdots \\ 
\rp_{-n+2}(z) & \rp_{-n+4}(z) & \cdots & \rp_{n}(z)
\end{array}\right|
\eeq 
for integer $n\geq 1$, with 
$y_{n+\sfrac{1}{2}}=-y_{-n-\sfrac{1}{2}}$ for $n\leq 0$, and  polynomials $p_k(z)$ 
defined by the %are
generating function 
\beq\label{pgenfn} 
\exp\left(z\la -\frac{4}{3}\la^3\right) = \sum_{k=0}^\infty \rp_k(z) \la^k, 
\qquad {\it  with}\quad  \rp_k(z)=0\quad {\it for}\quad k<0. 
\eeq
\end{theorem}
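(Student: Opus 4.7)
My plan is to derive the determinantal formula (\ref{piidet}) by iterating confluent Darboux transformations of a Schr\"odinger-type linear problem naturally associated to the Hamiltonian system (\ref{hampii}). The starting observation is that the B\"acklund symmetry $S\circ T$ in (\ref{piibts}) is induced by a Darboux factorization of an auxiliary Schr\"odinger operator $L = -\partial_z^2 + u$ whose potential $u$ is a rational expression in the current Painlev\'e II data $(q,p,\ell)$: one application of $S\circ T$ corresponds to the Darboux step $u \mapsto u - 2(\log\phi)''$ for a suitably chosen eigenfunction $\phi$ of $L$, and simultaneously sends $\ell \mapsto \ell+1$, multiplying the tau function by $\phi$ up to a time-dependent gauge.

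After $n$ such iterations, Crum's theorem \cite{matveevsalle} gives the new tau function (up to overall gauge) as a Wronskian $W(\phi_1,\ldots,\phi_n)$ of $n$ seed data for the initial operator $L_0 = -\partial_z^2$ corresponding to the seed $q = 0$ at $\ell_0 = \tfrac{1}{2}$. Because every iteration is performed at the same spectral value --- this is the \emph{confluent} case --- the $\phi_k$ are not independent eigenfunctions but form a Jordan chain $L_0\phi_1 = 0$, $L_0\phi_{k+1} = \phi_k$. Crucially, the generating function (\ref{pgenfn}) encodes exactly such a chain: writing $F(z,\lambda) = \exp(z\lambda - \tfrac{4}{3}\lambda^3)$, one has $\partial_z^2 F = \lambda^2 F$, so equating coefficients of $\lambda^k$ yields
\[
p_k''(z) = p_{k-2}(z),
\]
meaning the odd-indexed subsequence $p_1, p_3, p_5, \ldots$ is a Jordan chain of $\partial_z^2$ at eigenvalue $0$. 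The cubic term $-\tfrac{4}{3}\lambda^3$ in the generating function --- which is what makes the $p_k$ nontrivial polynomials rather than pure monomials $z^k/k!$ --- is the signature of the similarity reduction from the KP hierarchy to Painlev\'e II, and the parity-reversing symmetry $S$ forces the half-integer ladder $\ell = n+\tfrac{1}{2}$ to be generated precisely by the odd-indexed subchain $\phi_k = p_{2k-1}$.

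Substituting $\phi_k = p_{2k-1}$ into the Wronskian and using $p_k' = p_{k-1}$ (from $\partial_z F = \lambda F$), the $(i,j)$ entry becomes $\partial_z^{i-1} p_{2j-1} = p_{2j-i}$, recovering exactly the matrix in (\ref{piidet}). The overall prefactor $\exp(-z^3/24)$ is fixed by evaluating the seed tau function $\uptau_{1/2}$, which corresponds to the empty determinant: inserting $q = 0$, $p = -z/2$, $\ell = \tfrac{1}{2}$ into (\ref{piihell}) gives $h_{1/2}(z) = z^2/8$, and then (\ref{taudef}) integrates to $\uptau_{1/2}(z) = \exp(-z^3/24)$.

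The principal obstacle is the Jordan chain identification in the second paragraph. One must verify that the $S\circ T$ iteration really does correspond to a \emph{confluent} Darboux chain at a single fixed spectral value of $L_0$ (rather than a multi-level iteration at moving spectrum), and that the scaling reduction from KP produces precisely the polynomial chain generated by (\ref{pgenfn}), with the $S$-parity constraint singling out the odd-indexed seeds. Once this identification is established, the passage from the Wronskian to the Hankel-form determinant in (\ref{piidet}) and the normalization calculation are short algebraic checks.
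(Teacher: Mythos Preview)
Your proposal is correct and follows essentially the same approach as the paper: confluent Darboux transformations applied to the zero potential, with the Jordan chain $\psi_k''=\psi_{k-1}$ identified with the odd-indexed polynomials $\rp_{2k-1}$ from the generating function (\ref{pgenfn}), and the cubic exponent $-\tfrac{4}{3}\la^3$ arising from the scaling similarity reduction of the KdV/KP hierarchy. The paper routes this argument more explicitly through the Adler--Moser construction in the KdV variables $(x,t)$ before specializing via the similarity reduction (and is careful to invoke the confluent analogue of Crum's result rather than Crum's theorem proper), but the key identifications---including your normalization check $h_{1/2}=z^2/8$ giving the prefactor $e^{-z^3/24}$---match the paper's derivation leading to Corollary~\ref{yvco}.
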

Observe that, from differentiating the above generating function with respect to $z$, it follows that the derivatives of the polynomial entries appearing in (\ref{piidet}) satisfy 
$$ \rp'_k(z) = \rp_{k-1}, $$
so that each row is the derivative of the one above it, and hence each determinant is a Wronskian. The sequence of tau functions begins with 
$$
e^{-\sfrac{z^3}{24}},e^{-\sfrac{z^3}{24}}z,
\frac{e^{-\sfrac{z^3}{24}}(z^3+4)}{3},\frac{e^{-\sfrac{z^3}{24}}(z^6+20z^3-80)}{45},
\frac{e^{-\sfrac{z^3}{24}}(z^{10}+60z^7+11200z)}{4725},
$$
for $n=0,1,2,3,4$. The exponential prefactor 
$e^{-\sfrac{z^3}{24}}$ cancels out from the ratio of tau functions in  (\ref{piitauq}), and the numerical prefactors make no difference to the logarithmic derivative, 
so that the rational solutions are determined by a sequence of monic polynomials whose degrees are triangular numbers, known as the 
Yablonskii-Vorob'ev polynomials 
(see \cite{kaneko2003coefficients} and references). In 
\cite{ko}, these polynomials are constructed by making a 
reduction from the well-known polynomial tau functions for  the KP hierarchy; but in section 2 we will show how the 
rational solutions of Painlev\'e II 
arise naturally from a sequence 
of Darboux transformations applied to 
Schr\"odinger operators, starting from a zero potential, in a construction due to Adler and Moser  \cite{adler1978class}. 
%In fact
However, it turns out that these Darboux transformations are of a degenerate type, known as confluent, where the same eigenvalue is repeated in successive steps. In the confluent case,  the original formulation of Crum's theorem \cite{Crum} about iterated Darboux transformations does not apply.

Confluent Darboux transformations, and their  associated Wronskian representation under iteration, were studied systematically by 
Contreras-Astorga and Schulze-Halberg 
\cite{contreras2017recursive, 
schulze2013wronskian}. In the next section, 
we briefly review the theory of 
%give a very brief overview of  
Darboux transformations, and the confluent case in particular, before the following section, where we discuss the 
relation with the work of Adler and Moser on 
the vanishing rational solutions of KdV, 
thereby explaining %and 
how this leads to determinantal formulae for the  Yablonskii-Vorob'ev polynomials via similarity reduction. 
In section \ref{ohya}, we describe the Ohyama polynomials, which correspond to a 
sequence of tau functions  
for the algebraic solutions of the 
Painlev\'e equation 
\begin{equation}\label{P3D7}
    \frac{\dd^2P}{\dd z^2}=\frac{1}{P}\left( \frac{\dd P}{\dd z} \right)^2-\frac{1}{z}\left( \frac{\dd P}{\dd z} \right) +\frac{1}{z}(2P^2-\beta)-\frac{1}{P} . 
\end{equation} 
The latter is referred to as the  
Painlev\'e III ($D_7$) equation, because it 
has surface type $D_7^{(1)}$; and it has symmetry type $(A_1^{(1)})_{|\mathbf{\al}|^2=4}$, with the additional suffix 
denoting %corresponds to 
a simple 
root having double the usual squared length: see Table 4 in \cite{sakai2001rational}. The rest of the paper is devoted to the describing the main 
result, namely %the which is a construction of 
an explicit Wronskian representation  for the 
algebraic solutions of (\ref{P3D7}), 
which is constructed by means of a sequence of confluent Darboux transformations applied 
to a Schr\"odinger operator that was obtained 
in \cite{hone1999associated} 
by reduction from the Lax pair for the Camassa-Holm equation  (see also \cite{barnes2022similarity}). 
The corresponding algebraic solutions of 
the  
Painlev\'e III ($D_7$) equation are rational functions in $z^{\frac{1}{3}}$, while the associated sequence of Ohyama polynomials are polynomials in $z^{\frac{2}{3}}$.  
The Riemann-Hilbert problem for %the (\ref{P3D7}) 
these solutions was recently analysed and used to determine their asymptotic behaviour in \cite{buckingham2022algebraic}. Here our 
primary  concern is to present,  for the first time, a determinantal formula for these solutions,  
 which was identified as an open problem  in 
\cite{clarkson2003third}. The proof of our main result is given in section \ref{OhyamaSection}, with an associated 
generating function and 
its connection with Lax pairs being presented in section \ref{laxgen}. We end with some brief conclusions and an outlook on future work, while  some further technical details about intertwining operators and the associated 
algebraic structure for confluent Darboux transformations 
have been  relegated to an appendix.

\section{Confluent Darboux transformations}

The concept of the Darboux transformation was first proposed in %by Darboux 
\cite{DarbouxOriginal},  
as a covariance property of a 
general linear equation of second order, 
transforming any such equation 
into a new one with related coefficients. 
%This transformation is particularly effective and simple for transforming 
In the simplest case of an equation of Schr\"odinger type, % equations, where
written as 
\begin{equation}\label{schro0}
    \varphi''+(V_0+\la)\varphi= 0, %y(f_0(x)+m),
\end{equation}
with eigenvalue $\la$ and potential $V_0(z)$ (strictly speaking, this is 
minus the potential in the context of quantum mechanics),  
the new equation obtained % becomes 
under the action of the Darboux transformation 
is
\begin{equation}\label{schro1}
    \tilde{\varphi}''+\left(\psi\frac{\dd^2}{\dd z^2}\Big(\frac{1}{\psi}\Big) +\la \right)=0,  
\end{equation} 
where 
the primes denote derivatives  
with respect to the independent variable $z$, 
and 
the new eigenfunction is 
%through the use of the transformation
\begin{equation} \label{basetransform}
    \tilde{\varphi}=\varphi'-\frac{\psi'}{\psi}\varphi , 
\end{equation}
 for some particular solution $\psi$ to the original 
Schr\"odinger equation (\ref{schro0}) with an arbitrary fixed choice of eigenvalue, $\mu$ say. Notably, (\ref{schro1}) %this 
can also be written as 
\begin{equation}\label{newschro}
%u''=u[f_1(x)+m],
\tilde{\varphi}''+(V_1+\la)\tilde{\varphi}= 0
\end{equation}
so that the transformation has produced an eigenfunction for an equation of the same type, but with %a potential into 
a new potential $V_1(z)$ given by 
\beq\label{newpot} 
V_1=V_0+2(\log \psi)''.
\eeq 
\par

One of the most direct ways to 
understand the algebraic structure of the Darboux transformation is through factorization of the Schr\"odinger operator.    Upon %formulae best 
introducing the first order operators 
\beq\label{1storder} 
L = \frac{\dd}{\dd z} - y, \qquad 
L^\dagger = -\frac{\dd}{\dd z} - y, \qquad \mathrm{with} \quad y = \frac{\dd}{\dd z} (\log\psi), 
\eeq
which, from (\ref{basetransform}), are  
such that 
\beq\label{kerL} 
L\psi = 0, \qquad L\varphi=\tilde{\varphi}, 
\eeq 
%and 
one has the factorizations 
$$ 
-L^\dagger L = \frac{\dd^2}{\dd z^2} +V_0 +\mu, \qquad 
-L L^\dagger  = \frac{\dd^2}{\dd z^2} +V_1 +\mu,
$$
with $V_1=V_0+2y'$ by  (\ref{newpot}), 
and then by %from 
(\ref{schro0}) and (\ref{kerL}) it follows that 
$$ 
L^\dagger\tilde{\varphi}=(\la -\mu)\varphi,  
$$
from which the new Schr\"odinger equation 
(\ref{newschro}) is obtained  by applying $L$ to both sides. Also, from (\ref{1storder}) it follows that 
\beq\label{psiinv}
L^\dagger \psi^{-1}=0 \implies 
-LL^\dagger \psi^{-1} = 
\left( 
\frac{\dd^2}{\dd z^2} +V_1 +\mu\right) \, \psi^{-1} = 0.
\eeq

An elegant extension % generalisation or simplification 
of the above expressions, %that 
describing  the repeated action of successive Darboux transformations, was given by Crum \cite{Crum}, who provided a neat %compact 
formulation of the overall %and describes the 
transformation required to take the initial potential and solution % 
from a given base system to one that is %would be 
obtained by $n$ %repeated 
applications of the %Darboux 
transformation defined by (\ref{basetransform}) and (\ref{newpot}), essentially reducing $n$ steps to 
a single step. 
The key is to take $n$ independent 
eigenfunctions 
%solutions 
$\psi_i$ for the original equation, with associated eigenvalues $\mu_i$, that is %labeled by $i$;
\begin{equation}\label{psisolns}
\psi_i''+(V_0+\mu_i)\psi_i=0, \qquad i=1,\ldots,n,     
\end{equation}
and consider their %by letting the 
Wronskian %in $x$ of $n$ functions be:
%\begin{equation}
$$
    Wr(\psi_1,\psi_2,...,\psi_n)=\det 
    \left(\frac{\dd^{i-1}\psi_j }{\dd z^{i-1}}\right)_{i,j=1,\ldots,n}.  
    %(A), \quad A_{ij}=\frac{d^{i-1}}{dx^{i-1}}%f_j, \quad \text{for } i,j=1,2,\dots,n,
$$ 
%\end{equation}
Then the solution $\varphi_n$ of the Schr\"odinger equation 
\beq\label{ndtschro} 
\varphi_n''+(V_n+\la)\varphi_n=0
\eeq 
obtained by iterating the Darboux transformation 
$n$ times, starting from an initial solution 
$\varphi=\varphi_0$, is given by a ratio of Wronskians, namely %have a transformation to
\begin{equation}\label{ndt}
    %u[n]
    \varphi_n=\frac{Wr(\psi_1,\psi_2,...,\psi_n,\varphi)}{Wr(\psi_1,\psi_2,...,\psi_n)},
\end{equation}
while the potential in (\ref{ndtschro}) 
is given in terms of the original one by  
%the ch satisfies
\begin{equation}\label{ndtpot}
V_n =V_0+ 2\big(\log Wr(\psi_1,...,\psi_n)  \big)''  .
   % u[n]''=(f[n]+m)u[n], \quad\quad \text{where} \quad\quad f[n]=f(x)-2\left(\log Wr(y_1,...,y_n)  \right)''.
\end{equation}

Note that, in order to have distinct potentials $V_i$ and non-vanishing 
Wronskians $Wr(\psi_1,\psi_2,...,\psi_i)$ at each stage $0\leq i\leq n$,   Crum's description requires that the eigenfunctions 
$\psi_i$ chosen should have distinct eigenvalues $\mu_i$. However, it turns out that an analogous % similar 
Wronskian description can be found in the case of so-called confluent Darboux transformations, where the new eigenfunction introduced at each stage has the same eigenvalue. 
 This confluent case is the one that is relevant  
 to the repeated application of the  BTs   (\ref{piibts}) for Painlev\'e II, 
 and also applicable to the BTs for the 
 Painlev\'e III ($D_7$) equation 
 (\ref{P3D7}),  %system 
 which are the main object of our study here,  
 %and will be discussed in Section \ref{JordanSection}, 
 ultimately giving rise to a Wronskian representation for the Ohyama polynomials. 
 Before proceeding with the later, we 
 first summarize some of the results on 
 confluent Darboux transformations 
 from \cite{schulze2013wronskian} and 
 \cite{contreras2017recursive}.
% cases  , , where the successive solutions used for the transforms have the same eigenvalue. \par

In the confluent case, the entries of the Wronskian that produces the $n$th iteration of the Darboux transformation are no longer simple eigenfunctions of the original potential $V_0$, but instead are replaced by 
a sequence of generalized eigenfunctions 
$\psi_i$ satisfying a so-called Jordan chain, which means that, in particular,  
for each $i$ the condition 
%is a sequence of  %given by:
\begin{align} \label{JordanChain}
\left(\frac{\dd^2}{\dd z^2}+V_0+\mu \right)^{i}\psi_{i}&=0, \qquad i=1,2, \ldots %, 
\end{align}
must hold, where %$n>0$ is an arbitrary positive integer 
%for $n\neq0$, where $\lambda$ is the 
%and 
$\mu$ is the common eigenvalue shared by all the solutions used in the application of the Darboux transformation at each step $i$. The 
condition (\ref{JordanChain}) 
implies that 
$$ 
\left(\frac{\dd^2}{\dd z^2}+V_0+\mu \right)\psi_{i}\in 
\mathrm{ker}\left(\frac{\dd^2}{\dd z^2}+V_0+\mu \right)^{i-1},  
$$
but in order to iterate the Darboux transformation,  
%it is required that 
the Jordan chain %can be defined by 
should satisfy 
the stronger condition 
\beq\label{Jchain}
\left(\frac{\dd^2}{\dd z^2}+V_0+\mu \right)\psi_{i}=\psi_{i-1}, 
\eeq
%a pair 
up to an overall non-zero constant multiplier,  
corresponding to a choice of normalization for the eigenfunctions (and possible 
addition to right-hand side of  linear 
combinations of 
$\psi_k$ for $k<i-1$, has been suppressed, without loss 
of generality).
It is convenient to set $\psi_0=0$ so that the latter relation is valid for all $i\geq 1$. 
It can then be shown by induction 
(see the proof of Theorem 1 in \cite{schulze2013wronskian}) that, for each 
integer $n\geq 0$, after $n$ steps the function  
\begin{equation} \label{Jordanchi}
    \phi_n = \frac{Wr(\psi_1,\dots,\psi_{n+1})}{Wr(\psi_1,\dots,\psi_{n})}, 
\end{equation}
is a solution to 
\beq\label{ConfluentPotential}  \phi_n''+(V_n+\mu)\phi_n=0, \qquad 
\text{where}\quad V_n = V_0 + 
2\big(\log Wr(\psi_1,\dots,\psi_{n})\big)''. 
\eeq 
Furthermore, for all $n\geq 1$, $\phi_{n-1}^{-1}$ is another independent eigenfunction 
of the operator with potential $V_n$:
\beq\label{phiindept}
 (\phi_{n-1}^{-1})''+(V_n+\mu)\phi_{n-1}^{-1}=0, \qquad 
\text{with} \quad 
Wr(\phi_{n-1}^{-1},\phi_n)=1. 
\eeq 
It will be convenient to use $\theta_n$ to  denote the Wronskian of the first $n$ generalized eigenfunctions, 
%egne  $\theta_n = Wr(\psi_1,\ldots,\psi_n)$, 
%can also be shown, 
and then it follows directly %seen intuitively 
from \eqref{Jordanchi} that
\begin{equation}
   \theta_n= Wr(\psi_1,\dots,\psi_{n})=\prod^{n-1}_{j=0}\phi_j, \qquad \mathrm{for} \quad n\geq 1, 
\end{equation}
with $\theta_0=1$ corresponding to the ``empty'' Wronskian.   
\par

For completeness,  some further details about Jordan chains and associated Wronskian formulae have been included in the appendix.

\section{Yablonskii-Vorob'ev polynomials via   
Adler and Moser} 
% System}

In this section, 
we describe a simple example of a Jordan chain of generalized eigenfunctions that produces a sequence of Schr\"odinger operators, %which 
appearing  in the work of  Adler and Moser \cite{adler1978class}, who 
used iterated Darboux %present a similar system of Darboux 
transformations to construct the 
rational solutions of 
%associated with 
the Korteweg-deVries 
(KdV)  equation, which is the PDE  
\beq\label{kdv} 
V_t=V_{xxx}+6VV_x. 
\eeq
By a direct  argument, they were able to 
construct a Wronskian representation for these solutions from scratch, by matching them to the polynomial solutions of the recurrence relation 
\begin{equation} \label{AMTheta}
    \theta_{k+1}'\theta_{k-1}-\theta_{k+1}\theta_{k-1}'=\theta_k^2 \quad \quad \text{for } k\geq 1,  
    %in\mathbb{N}.
\end{equation}
starting with  $\theta_0=1$, $\theta_1=x$, where (above and 
in the most of the rest of this section) the $'$ denotes differentiation by $x$. 
(Note that, compared with 
\cite{adler1978class}, 
we have removed a factor $(2k+1)$ 
from the right-hand side above.) 
In fact, these 
polynomials and the recurrence (\ref{AMTheta}) 
were first discovered by Burchnall and Chaundy in the 1920s: see \cite{veselov2015burchnall} 
and references for further details. 
%for their $\theta_k$. 
However, here we show how 
%we can equivalently use 
the Wronskian expressions for the 
polynomials $\theta_k$ are obtained  
immediately by applying the theory of  confluent Darboux transformations to a specific Jordan chain. In addition, 
we explain how the rational 
solutions of Painlev\'e II, as in  
Table \ref{piirat}, arise as a 
special case of  this construction, 
by taking a scaling similarity reduction 
of the KdV equation. 
%construct these Wronskians from the 
%, as in the Painlev\'e system. \par

%Note that 
The solutions of the recurrence 
(\ref{AMTheta}) admit the freedom 
to replace $\theta_{k+1}\to \theta_{k+1}+c \theta_{k-1}$, for an arbitrary integration 
constant $c$, since each 
$\theta_{k+1}$ can be found from 
the 
previous two terms in the sequence by 
integrating 
$$
\frac{\dd }{\dd x} \left(\frac{\theta_{k+1}}{\theta_{k-1}}
\right) = \left(\frac{\theta_{k}}{\theta_{k-1}}
\right)^2   , 
$$
although it is by no means 
%found from the 
obvious that integration of the 
rational function on the right-hand side 
should automatically lead to a new 
polynomial $\theta_{k+1}$ at each stage. 
Adler and Moser showed directly that, for each $k\geq 0$, the pair of independent functions   
\begin{equation}\label{phipair} 
    \phi_k=\frac{\theta_{k+1}}{\theta_k}, \quad \quad (\phi_{k-1})^{-1}=\frac{\theta_{k-1}}{\theta_k},
\end{equation}
lie in the kernel of %are solutions to 
the Schr\"odinger operator 
%with potential $V_k$ given by 
\beq\label{VkKdV}
\frac{\dd^2}{\dd x^2}+V_k, 
\qquad \mathrm{with} \quad  
V_k = 2\frac{\dd^2}{\dd x^2}\left(\log \theta_k\right), 
\eeq 
where it is consistent to take $\theta_{-1}=1$ 
so that (\ref{phipair}) with $k=0$ yields the two independent solutions $\phi_1=x$, 
$\phi_0^{-1}=1$ of the Schr\"odinger equation 
with the initial potential $V_0=0$. %zero 
The Burchnall-Chaundy relation (\ref{AMTheta}) 
is equivalent to the normalization 
of the Wronskian of the pair of solutions  
(\ref{phipair}): 
$$ 
Wr(\phi_{k-1}^{-1},\phi_k)=1. 
$$ 

To apply a sequence of confluent Darboux 
transformations with repeated eigenvalue $\mu=0$, starting from the potential $V_0=0$,  
one could start from any solution 
$\phi_0=ax+b$, but by using the freedom to rescale 
and translate the independent variable, we take $\phi_0=x$.   
Then the  Jordan chain associated with this initial potential $V_0=0$ and zero eigenvalue 
$\mu=0$ is particularly simple: 
the condition (\ref{JordanChain}) becomes 
\begin{align}
    \left(\frac{d^2}{dx^2} +V_0+\mu\right)^{k}\psi_k&=\frac{\dd^{2k}\psi_k}{\dd x^{2k}}=0, 
\end{align}
which implies that $\psi_{k}$ is a polynomial 
of degree at most $2k-1$. 
For consistency with (\ref{Jordanchi}) we can fix  
%We similarly normalize 
$\psi_1=x$, 
%to align with the definition of $\theta_1=x$, 
and then from (\ref{Jchain}) all the  other generalized eigenfunctions $\psi_k$ are 
found recursively by integrating the relation 
\beq\label{2nd}
\psi_k''=\psi_{k-1},  
\eeq 
which yields  
%oatiecan be simply defined by
\begin{equation} \label{AMPsi}
    \psi_k=\frac{x^{2k-1}}{(2k-1)!}+\sum^{k-2}_{i=0}c_{k-i}\frac{x^{2i}}{(2i)!},
\end{equation}
for a set of arbitrary constants $c_i$. 
Observe that, despite performing two integrations at each step, the formula (\ref{AMPsi}) only contains half as many constants as one would expect, and only even powers of $x$ are added to the initial term of odd degree, because we have exploited the 
freedom to  subtract from $\psi_k$ any multiples of $\psi_j$ with $j<k$, 
%without changing   
%because this 
which makes no difference to the sequence of Wronskians $\theta_k$. Upon assigning a weight 
$2i-1$ to each constant $c_i$, we see that each generalized eigenfunction $\psi_k$ is a weighted homogeneous polynomial of degree  $2k-1$, and hence $\theta_k$ has weight 
$\sum_{i=1}^k(2i-1) = \tfrac{1}{2}k(k+1)$. Up to rescaling, the $c_i$ are equivalent to the constants denoted $\tau_i$ in \cite{adler1978class}, and they correspond to the times of the KdV hierarchy. 
Hence we arrive at
the following result. 

\begin{theorem}\label{burchau}
The sequence of polynomials  
given by $\theta_{-1}=1=\theta_0$ and the Wronskians 
\beq\label{bcpolys}
\theta_k(x,c_2,c_3,c_4,\ldots)=Wr(\psi_1,\dots,\psi_k) 
\qquad 
\mathrm{for}\quad k\geq 1
\eeq 
 with entries    
$\psi_i$ defined by %$\psi_1=x$ and 
\eqref{AMPsi}, 
satisfies the Burchnall-Chaundy relation  
\eqref{AMTheta}. Moreover, if we identify 
$c_2= 4t$, then each of the 
%sequence of 
potentials 
$$ 
V_k = 2\, \frac{\dd^2}{\dd x^2}(\log \theta_k) 
$$
is a rational solution of the KdV equation (\ref{kdv}), and similarly for each $c_i$, $i>2$ 
identified as the higher time of weight $2i-1$ in the KdV hierarchy. 
\end{theorem}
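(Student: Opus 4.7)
The plan is to apply the confluent Darboux transformation theory of Section 2 to the trivial initial data $V_0 = 0$, $\mu = 0$, and then identify the resulting family of rational potentials with solutions of the KdV hierarchy. The Burchnall-Chaundy relation will drop out as the Wronskian normalization (\ref{phiindept}), and the KdV assertion will follow from the commutativity of Darboux transformations with the KdV flows once $c_2$ is identified as a multiple of the KdV time.

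First, verify the Jordan chain. With $V_0 = 0$ and $\mu = 0$, the condition (\ref{Jchain}) reduces to $\psi_i'' = \psi_{i-1}$ with $\psi_0 = 0$, $\psi_1 = x$. Term-by-term differentiation of (\ref{AMPsi}) confirms this directly: the leading term $x^{2i-1}/(2i-1)!$ differentiates to $x^{2i-3}/(2i-3)!$, and each $c_{i-j}\,x^{2j}/(2j)!$ with $j \geq 1$ differentiates to $c_{i-j}\,x^{2(j-1)}/(2(j-1))!$, assembling to $\psi_{i-1}$ after reindexing. The absence of odd powers $x^{2j-1}$ with $j < i-1$ from (\ref{AMPsi}) reflects the gauge freedom noted after (\ref{Jchain}) to subtract multiples of $\psi_1, \ldots, \psi_{i-2}$, which does not affect $\theta_k$ by multilinearity of the Wronskian. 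Now invoke (\ref{Jordanchi}) and (\ref{phiindept}): the pair $\phi_k = \theta_{k+1}/\theta_k$ and $\phi_{k-1}^{-1} = \theta_{k-1}/\theta_k$ furnish independent solutions to the Schr\"odinger equation with potential $V_k = 2(\log\theta_k)''$, normalized by $Wr(\phi_{k-1}^{-1}, \phi_k) = 1$. Expanding that Wronskian yields
$$\frac{\theta_{k-1}\theta_{k+1}' - \theta_{k-1}'\theta_{k+1}}{\theta_k^2} = 1,$$
and clearing the denominator produces precisely (\ref{AMTheta}). As a by-product, the Darboux construction guarantees inductively that each $\theta_k$ is polynomial in $x$ and the $c_i$, resolving the non-obvious question of why integrating $(\theta_k/\theta_{k-1})^2$ at each step returns a polynomial.

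For the KdV claim, set $c_2 = 4t$ and invoke the classical fact that Darboux transformations intertwine with the KdV flow: if $V$ solves $V_t = V_{xxx} + 6VV_x$ and $\psi$ is an eigenfunction whose $t$-evolution is compatible with the time part of the KdV Lax pair, then $V + 2(\log\psi)''$ is again a KdV solution. Since $V_0 \equiv 0$ trivially solves KdV, and under $c_2 = 4t$ the chain element $\psi_i$ acquires $t$-dependence only through the single term $c_2\,x^{2(i-2)}/(2(i-2))!$, a direct check shows this $t$-dependence reproduces, up to the gauge ambiguity among lower-index $\psi_j$, the time part of the KdV Lax pair on the zero potential, so the intertwining propagates through every iterate $V_k$. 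The higher $c_i$ are matched to higher KdV-hierarchy times by the same argument, consistent with the weighting $\mathrm{wt}(c_i) = 2i - 1$ that renders $\psi_i$ homogeneous of weight $2i-1$ and $\theta_k$ of weight $\tfrac{1}{2}k(k+1)$. The main obstacle is this last step: pinning down the specific normalization $c_2 = 4t$ against the standard form (\ref{kdv}). The cleanest verification combines a base-case check at $k = 1, 2$ (where $V_1 = -2/x^2$ trivially satisfies KdV and $V_2$ first exhibits $t$-dependence) with induction along (\ref{AMTheta}); the conceptual Darboux-KdV intertwiner provides an alternative proof but requires writing down the time part of the Lax pair on $V_0 = 0$ explicitly to fix the factor of $4$.
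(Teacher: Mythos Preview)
Your proposal is correct and follows essentially the same route as the paper: the Burchnall-Chaundy relation is obtained by expanding the Wronskian normalization $Wr(\phi_{k-1}^{-1},\phi_k)=1$ from the confluent Darboux theory of Section~2 applied to $V_0=0$, $\mu=0$, and the KdV assertion is handled by the covariance of the Darboux transformation with both parts of the KdV Lax pair (the paper makes this slightly more explicit by writing out the Lax equations \eqref{lax1}--\eqref{lax2} and citing \cite{matveevsalle}, whereas you sketch a direct verification plus base-case check, but the underlying mechanism is identical).
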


The only part of the above result that we have not discussed %explained 
so far is the dependence on time $t$, and the other times in the KdV hierarchy. This is best 
understood by considering the Lax pair for the KdV hierarchy, which corresponds to an isospectral evolution of the Schr\"odinger 
operator $\cal L$, defined as the compatibility condition 
\beq\label{laxkdv}
\partial_t {\cal L}  = [{\cal M},{\cal L}] 
\eeq for the linear system 
\begin{align}
({\cal L}+\la)\phi & = 0,\label{lax1} \\ 
\partial_t\phi &={\cal M}\phi, \label{lax2} 
\end{align}
where 
$$ 
{\cal L}= \frac{\dd^2}{\dd x^2}+V, 
\qquad {\cal M} = 4{\cal L}^{\sfrac{3}{2}}_+ = 4\frac{\dd^3}{\dd x^3}+6 V\frac{\dd}{\dd x}+3 V .
$$
(Despite the fact that we are now considering partial derivatives, we reserve the ordinary 
derivative symbol for the distinguished variable $x$.) It turns out that the Darboux transformation acts  covariantly  
not only on the Schr\"odinger equation (\ref{lax1}), which is the $x$ part of the linear system, but also on the $t$ evolution of the wave function $\phi$, given by (\ref{lax2}). 
This leads to the familiar %important 
result that 
the Darboux transformation for the 
Schr\"odinger operator induces a 
BT on the KdV equation, sending solutions to solutions. (For a detailed discussion of this property of the Darboux transformation, and its extension to other equations of Lax or zero curvature type, and associated  integrable PDEs, see \cite{matveevsalle}.)     
Similarly, for each $i$,  up to scaling we can identify $c_i$ with a  time variable $t'$, and the corresponding member of the KdV hierarchy is given by %  there is an analogous result for 
the Lax flow 
$$
\partial_{t'} {\cal L}  = [{\cal M}_i,{\cal L}]
$$
where ${\cal M}_i={\cal L}^{\sfrac{(2i-1)}{2}}_+$, 
which is the compatibility condition of the 
Schr\"odinger equation (\ref{lax1}) with the 
time evolution 
$$ 
\partial_{t'}\phi = {\cal M}_i\phi. 
$$

Starting from the simplest (vacuum) solution 
$V_0=0$, a single Darboux transformation with 
eigenvalue zero produces 
the stationary rational solution, which is given 
by the second logarithmic derivative 
$$ 
V_1 = 2 %\frac{\dd^2}{\dd x^2} 
(\log\theta_1 )'' 
=-\frac{2}{x^2}.
$$
(In contrast, applying a Darboux transformation with a non-zero eigenvalue 
produces a soliton solution from the vacuum.) 
Under the action of the confluent Darboux transformation, the next solution is 
obtained from  (\ref{VkKdV}) for $k=2$, that  
is %we have 
\begin{equation}
   \theta_2= Wr(\psi_1,\psi_2)= \begin{vmatrix}
x & \frac{x^3}{6}+c_2 \\
1 & \frac{x^2}{2}
\end{vmatrix} =\frac{x^3}{3}-c_2,
\end{equation}
so with $c_2=4t$ the corresponding solution of 
(\ref{kdv}) is 
\beq\label{V2} 
V_2 = 
2\left(\log\Big(\frac{x^3}{3}-c_2\Big) \right)'' = 
-\frac{6x(x^3 + 24 t)}{(x^3 - 12 t)^2}. 
\eeq  

To see how the rational solutions of 
Painlev\'e II can be deduced from this construction, it is sufficient to 
consider scaling similarity solutions of KdV, 
which take the form 
\beq \label{kdvsred} 
V(x,t) = 
(-3t)^{-\sfrac{2}{3}} \big( p(z) +\tfrac{z}{2}\big) , \qquad 
\mathrm{with} \quad z=x(-3t)^{-\sfrac{1}{3}} . 
\eeq
Upon substituting the above expression into 
(\ref{kdv}), the 3rd order PDE reduces to the 
2nd order ODE  
\beq\label{p34}
\frac{\dd^2 p}{\dd z^2}=\frac{1}{2p}\left(\frac{\dd p}{\dd z}\right)^2 
-2p^2-zp-\frac{\ell^2}{2p}, 
\eeq 
where the coefficient $\ell^2$ arises as an 
integration constant. The latter ODE is called the Painlev\'e XXXIV equation \cite{ince}, and it is precisely the equation satisfied by the 
conjugate momentum variable $p$ when $q$ is 
eliminated from the pair of 
Hamilton's equations (\ref{hampii}). 
Thus, letting primes now denote derivatives 
with respect to $z$, 
\beq\label{pqform} 
q = \frac{p'+\ell}{2p} 
\eeq 
satisfies the Painlev\'e II equation 
(\ref{pii}) with parameter $\al=\ell-\tfrac{1}{2}$ whenever $p$ is a solution of (\ref{p34}); and conversely, 
whenever $q$ is a solution of Painlev\'e II, 
it follows that 
\beq\label{miura}
p=-q'-q^2-\tfrac{z}{2}
\eeq 
satisfies Painlev\'e XXXIV with $\ell=\al+\tfrac{1}{2}$. The latter formula also 
arises by reduction of the Miura 
transformation 
\beq \label{mmap} 
V=-y_x-y^2, 
\eeq 
which maps solutions of the modified 
Korteweg-deVries (mKdV) equation 
\beq\label{mkdv} 
y_t=y_{xxx}-6y^2y_x
\eeq 
to solutions of the KdV equation (\ref{kdv}). 
Indeed, if one takes the similarity reduction 
\beq \label{mkdvsred}
y(x,t)=(-3t)^{-\sfrac{1}{3}} q(z) 
\qquad 
\mathrm{with} \quad z=x(-3t)^{-\sfrac{1}{3}}, 
\eeq 
then the mKdV equation reduces to 
Painlev\'e II, 
while by substituting (\ref{mkdvsred}) into the Miura formula (\ref{mmap}), it is clear 
that $V(x,t)$ 
is a similarity solution of KdV, being of the 
form (\ref{kdvsred}) with $p$ given in terms of $q$ by (\ref{miura}). 

Under the similarity reduction (\ref{kdvsred}), 
the trivial potential $V_0=0$ corresponds to the solution $p=-\tfrac{z}{2}$ of (\ref{p34}) 
with $\ell=\sfrac{1}{2}$, while $V_1$ 
corresponds to 
$$ 
p=-\frac{2}{z^2} - -\frac{z}{2}, 
\qquad \ell=\sfrac{3}{2},  
$$ 
and from $V_2$ as in (\ref{V2}), the scaling reduction gives 
$$ 
p=-\frac{6z(z^3-8)}{(z^3+4)^2}  -\frac{z}{2}, 
\qquad \ell=\sfrac{5}{2}. 
$$
These solutions all have the correct 
weighted homogeneity 
under the scaling $x\to \gamma x$, $t\to\gamma^3 t$ of the KdV independent variables, but the other KdV solutions 
$V_k$ for $k>2$ do not have 
the right scaling behaviour unless we fix 
%take 
the higher time parameters $c_i$ to be zero for $i>2$. After making this adjustment, the 
fact that each of the  $\theta_k$ has homogeneous degree $\tfrac{1}{2}k(k+1)$ means that 
they can be rescaled to give polynomials in the similarity variable $z$.

\begin{lemma} After setting %the parameters 
$c_2=4t$ and $c_i=0$ for $i>2$, the polynomial 
solutions of the Burchnall-Chaundy relation 
satisfy the scaling property 
\beq\label{scaling}
\theta_k(x,c_2,0,0,\ldots) = (-3t)^{\sfrac{k(k+1)}{6}}\, \theta_k \big(z,-\tfrac{4}{3},0,0,\ldots\big).
\eeq 
\end{lemma}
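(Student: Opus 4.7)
The proof is essentially an exercise in verifying that $\theta_k$ is weighted-homogeneous in the variables $(x, c_2, c_3, \ldots)$ under a suitable assignment of weights, and then specialising. I would proceed in three short steps.

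First, I would fix the weight assignment already indicated just before the lemma: assign $x$ the weight $1$ and each constant $c_i$ the weight $2i-1$. Inspecting the explicit formula \eqref{AMPsi}, the leading term $x^{2k-1}/(2k-1)!$ is manifestly weighted-homogeneous of degree $2k-1$, and every remaining monomial $c_{k-i}\, x^{2i}/(2i)!$ has weight $\bigl(2(k-i)-1\bigr)+2i = 2k-1$. Hence, under the scaling $x\mapsto\la x$, $c_i\mapsto \la^{2i-1}c_i$,
\beq\label{psiscale}
\psi_k(\la x, \la^3 c_2, \la^5 c_3, \ldots) = \la^{2k-1}\, \psi_k(x, c_2, c_3, \ldots).
\eeq

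Second, I would lift this to $\theta_k = Wr(\psi_1,\ldots,\psi_k)$. Since $\partial_x$ carries weight $-1$, the $(i,j)$-entry $\partial_x^{i-1}\psi_j$ of the Wronskian matrix has weight $2j-i$ under the scaling above. Every term in the Leibniz expansion of $\det\bigl(\partial_x^{i-1}\psi_j\bigr)$ has the same total weight
$$
\sum_{i=1}^{k}\bigl(2\sigma(i)-i\bigr) \;=\; 2\cdot\tfrac{k(k+1)}{2}-\tfrac{k(k+1)}{2}\;=\;\tfrac{k(k+1)}{2},
$$
independently of $\sigma\in S_k$, so $\theta_k$ itself is weighted-homogeneous of degree $k(k+1)/2$:
\beq\label{thetascale}
\theta_k(\la x, \la^3 c_2, \la^5 c_3, \ldots) = \la^{k(k+1)/2}\, \theta_k(x, c_2, c_3, \ldots).
\eeq

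Finally, I would specialise. Setting $c_2=4t$ and $c_i=0$ for $i>2$, and choosing $\la = (-3t)^{-\sfrac{1}{3}}$, the hypothesis gives $\la x = z$, $\la^3 c_2 = (-3t)^{-1}(4t) = -\tfrac{4}{3}$, while all higher $c_i$ remain zero. Substituting into \eqref{thetascale} and rearranging yields exactly the claimed identity \eqref{scaling}.

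There is no substantive obstacle here; the only care needed is the bookkeeping of weights under differentiation and in the determinant expansion. The content of the lemma is that the two restrictions imposed on the Adler--Moser parameters, namely $c_2=4t$ and $c_i=0$ for $i>2$, are precisely the ones compatible with the scaling symmetry $(x,t)\mapsto(\gamma x,\gamma^3 t)$ of KdV, so that the resulting polynomials depend on $x$ and $t$ only through the similarity variable $z$ up to an overall prefactor — which is the statement that allows one to pass from KdV rational solutions to Painlev\'e~II rational solutions in the next step.
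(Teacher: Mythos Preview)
Your proof is correct and follows the same approach that the paper itself sketches in the paragraph immediately preceding the lemma (the paper does not give a separate formal proof). The paper simply asserts that each $\psi_k$ is weighted-homogeneous of degree $2k-1$ and that consequently $\theta_k$ has weight $\tfrac{1}{2}k(k+1)$; you have supplied the missing details, in particular the bookkeeping of how the derivatives in the Wronskian contribute, which the paper glosses over (indeed, the paper's displayed expression $\sum_{i=1}^k(2i-1)$ for the weight is a slip, since that sum equals $k^2$, not $\tfrac{1}{2}k(k+1)$; your determinant computation gives the correct value).
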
 

If we now compare the Jordan chain 
for these scaling similarity solutions to the iterated action of the BT (\ref{birat})
for Painlev\'e II, we find that they match up 
exactly. To see this, note that for any fixed parameter $\ell$, since (\ref{p34}) is invariant under $\ell\to-\ell$, the formula (\ref{pqform}) relates two solutions of Painlev\'e II to the same solution of 
Painlev\'e XXXIV, by writing
\beq\label{pqaddn}
q_{\ell}=\frac{p_\ell '+\ell}{2p_\ell}, \qquad 
q_{-\ell}=\frac{p_\ell '-\ell}{2p_\ell}. 
\eeq 
Upon adding these two equations, substituting 
for $q_{\pm \ell}$ in terms of the Okamoto tau function $\uptau_\ell = \uptau_{-\ell}$ 
via (\ref{piitauq}), and integrating the logarithmic derivative that 
appears on both sides, we find that 
$p_\ell$ is given by the ratio 
\beq\label{ptauratio} 
p_\ell = C_\ell \, \frac{\uptau_{\ell -1}\uptau_{\ell +1}}{\uptau_{\ell}^2}, 
\eeq 
for some normalization constant $C_\ell$. 
On the one hand, 
comparing the above expression with (\ref{pltau}) yields an equation for the sequence of tau functions, 
\beq\label{todal} 
C_\ell \, \uptau_{\ell -1}\uptau_{\ell +1} = 2 
\big(\uptau_{\ell }\uptau_{\ell}'' - (\uptau_{\ell }')^2\big), 
\eeq  
which is a bilinear form of the Toda lattice.  
On the other hand,  subtracting one of the two equations (\ref{pqaddn}) from the other, 
then substituting for $y_{\pm \ell}$ using (\ref{piitauq}) and for $p_\ell$ using 
(\ref{ptauratio}), produces the 
Burchnall-Chaundy relation in the modified 
form 
\beq\label{bcmod}
\uptau_{\ell+1}'\uptau_{\ell-1} - 
\uptau_{\ell+1}\uptau_{\ell-1}' 
= -\ell\, C_\ell \, \uptau_\ell^2 .
\eeq 
%More precisely
Furthermore, when we restrict to the sequence of rational solutions with parameters $\ell=n+\sfrac{1}{2}$, and compare  (\ref{kdvsred}) with 
(\ref{scaling}), for each integer $n$  we see that the ratios 
$\uptau_{n+\sfrac{3}{2}}/\uptau_{n+\sfrac{1}{2}}$, $\uptau_{n-\sfrac{1}{2}}/\uptau_{n+\sfrac{1}{2}}$ provide a pair of independent 
eigenfunctions for the potential  
$$
p_{n+\sfrac{1}{2}} +\frac{z}{2} = 2 \frac{\dd^2}{\dd z^2} \Big(\log 
\theta_n \big(z,-\tfrac{4}{3},0,0,\ldots\big)
\Big) 
=2 \frac{\dd^2}{\dd z^2}\Big( \log \uptau_{n+\sfrac{1}{2}} (z) \Big) +\frac{z}{2}. 
$$
Then we can fix the normalization 
$- (n+\sfrac{1}{2})
C_{n+\sfrac{1}{2}}=1$ to match 
(\ref{bcmod}) with (\ref{AMTheta}), and note that this relation is invariant under rescaling all the tau functions by the same factor $e^{-\sfrac{z^3}{24}}$. Hence, from the above lemma, we have   

\begin{corollary}\label{yvco}
Up to normalizing constants, the Yablonskii-Vorob'ev polynomials are obtained from the Burchnall-Chaundy polynomials 
(\ref{bcpolys}) by replacing $x\to z$, $c_2\to -\tfrac{4}{3}$, and setting $c_i=0$ for all $i>2$. Moreover, they are related to  
the Okamoto tau functions by 
$$
\theta_{n}(z,-\tfrac{4}{3},0,0,\ldots) = \exp\left(\frac{z^3}{24}\right)\, \uptau_{n+\sfrac{1}{2}}(z). 
$$
\end{corollary}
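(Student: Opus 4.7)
The plan is to recognize the corollary as a consequence of matching two bilinear structures: the Burchnall--Chaundy recurrence \eqref{AMTheta} satisfied by the polynomials $\theta_n$, and the Toda-type bilinear \eqref{bcmod} satisfied by the Okamoto tau functions $\uptau_\ell$, together with the KdV-to-Painlev\'e II reduction that has already been set up.

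First I would apply the scaling lemma to restrict attention to the case $c_2 = -\tfrac{4}{3}$, $c_i = 0$ for $i>2$, so that $\theta_n\bigl(z, -\tfrac{4}{3}, 0, \ldots\bigr)$ is a polynomial in $z$ of degree $\tfrac{1}{2}n(n+1)$, and the associated KdV potential $V_n = 2(\log\theta_n)''$ is a scaling similarity solution. Via the reductions \eqref{kdvsred}--\eqref{pqform}, this corresponds to the solution $p_{n+\sfrac{1}{2}}(z)$ of Painlev\'e XXXIV with parameter $\ell = n + \tfrac{1}{2}$, which governs the rational solutions $q_{\pm(n+\sfrac{1}{2})}$ of Painlev\'e II via \eqref{pqaddn}. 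Using the tau-function representation \eqref{pltau}, the same potential can also be written as $p_{n+\sfrac{1}{2}} + \tfrac{z}{2} = 2\frac{\dd^2}{\dd z^2}\log\uptau_{n+\sfrac{1}{2}}(z) + \tfrac{z}{2}$, so integrating twice in $z$ the identity
$$ 2\,\frac{\dd^2}{\dd z^2}\log\theta_n\bigl(z, -\tfrac{4}{3}, 0, \ldots\bigr) \;=\; 2\,\frac{\dd^2}{\dd z^2}\log\uptau_{n+\sfrac{1}{2}}(z) + \frac{z}{2} $$
yields $\theta_n = K_n\, e^{z^3/24}\,\uptau_{n+\sfrac{1}{2}}\, e^{a_n z + b_n}$, and the polynomial character of both sides forces $a_n = 0$ and pins down the overall multiplicative constant.

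To close the argument I would verify consistency of the recursive structure. The bilinear \eqref{bcmod} for $\uptau_{n+\sfrac{1}{2}}$, after the rescaling $\uptau_\ell \mapsto e^{z^3/24}\uptau_\ell$ (under which \eqref{bcmod} is manifestly invariant) together with the normalization $-(n+\tfrac{1}{2})C_{n+\sfrac{1}{2}} = 1$, becomes exactly the Burchnall--Chaundy recurrence \eqref{AMTheta}. Since both $\theta_n(z, -\tfrac{4}{3}, 0, \ldots)$ and $e^{z^3/24}\uptau_{n+\sfrac{1}{2}}$ then satisfy the same recurrence with matching initial data at $n = 0, 1$ (namely $1$ and $z$, using the explicit sequence after \eqref{pgenfn} and the fact that $\psi_1 = z$), they coincide for all $n$ by induction on $n$. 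The main technical point is confirming that the normalization choice $-(n+\tfrac{1}{2})C_{n+\sfrac{1}{2}} = 1$ can be imposed consistently along the entire sequence, which amounts to the non-vanishing of $C_{n+\sfrac{1}{2}}$, equivalently to the non-vanishing of $p_{n+\sfrac{1}{2}}$ as a rational function of $z$; this is immediate from the explicit form of the rational solutions generated by the sequence of confluent Darboux transformations.
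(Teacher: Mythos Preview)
Your approach is essentially the same as the paper's: match the Schr\"odinger potentials via the similarity reduction to identify $2(\log\theta_n)'' = 2(\log\uptau_{n+\sfrac{1}{2}})'' + z/2$, then use the Burchnall--Chaundy relation \eqref{bcmod} with the normalization $-(n+\tfrac{1}{2})C_{n+\sfrac{1}{2}}=1$ and its invariance under the common rescaling by $e^{-z^3/24}$ to conclude. The paper compresses this into the sentence just before the corollary; you have spelled out the integration step and the verification of initial data more explicitly.

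One small caution on your second paragraph: the Burchnall--Chaundy recurrence \eqref{AMTheta} alone does \emph{not} determine $\theta_{n+1}$ from $\theta_n,\theta_{n-1}$, because of the freedom $\theta_{n+1}\to\theta_{n+1}+c\,\theta_{n-1}$ noted earlier in the section. So your sentence ``they coincide for all $n$ by induction on $n$'' is not self-contained; what actually closes the argument is the combination of the recurrence with the potential identification from your first paragraph, which fixes $\theta_n$ up to a factor $K_n e^{a_n z}$ and then forces $a_n$ to be constant (hence zero by the $n=0,1$ check). Your ``polynomial character'' shortcut for killing $a_n$ is fine too, since the polynomial nature of $e^{z^3/24}\uptau_{n+\sfrac{1}{2}}$ is already on record from Theorem~\ref{piiratthm} and the explicit list of tau functions following it.
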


An alternative route to the Yablonskii-Vorob'ev polynomials,  
and the one taken in \cite{ko}, 
is to start from the rational solutions of the KP hierarchy, then reduce these to the rational solutions of KdV, and finally make the similarity reduction to 
the corresponding solutions of Painlev\'e XXXIV/Painlev\'e II. 
The polynomial tau functions $\tau=\tau_Y(\underline{t})$ of the KP hierarchy are the Schur functions 
$$ 
\tau_Y (\underline{t}) 
= Wr (\rp_{j_n}, \rp_{j_{n-1}+1},\ldots, \rp_{j_1+n-1}), 
$$
associated with a Young diagram $Y$ defined by integers $j_1\geq j_2\geq \cdots \geq j_n$,  where $\underline{t}=(t_1,t_2,t_3,\ldots)$ is the sequence of KP times, 
%of the KP hierarchy, 
and $\rp_j$ are the elementary Schur polynomials, with 
generating function 
\beq\label{schur} 
\exp\left(\sum_{i=1}^\infty t_i \la^i\right) =\sum_{j=0}^\infty \rp_j(\underline{t}) \la^j,  
\eeq  
which satisfy 
\beq\label{pderiv}
\partial_{t_i} \rp_j = \rp_{j-i}
\eeq 
and the infinite hierarchy of symmetries of the heat equation, that is 
$$
\partial_{t_k}\rp_j = \partial_x^k \rp_j$$
(with $t_1=x$). (For more details on the KP hierarchy and its 
solutions, see \cite{sato}.) 
The reduction from KP to KdV requires that all the even times should be discarded, so that only dependence on the odd times $t_1,t_3,t_5,\ldots$ remains, and the tau functions which survive are those that satisfy 
$$ 
\partial_{t_{2i}}\tau = 0, 
$$
which in the case of polynomial solutions requires that only the Schur functions with  triangular Young diagrams 
should remain, namely 
\beq\label{tri} 
\tau_{Y,KdV}(\underline{t}) = Wr (\rp_{1}, \rp_{3},\ldots, \rp_{2n-1}) . 
\eeq 
By comparison with (\ref{bcpolys}), we see that these are precisely the Burchnall-Chaundy polynomials, when we  identify $t_1=x$, $t_3=c_2=4t$ and $t_{2i-1}=c_i$ for $i>2$, and $\rp_{2i-1}=\psi_i$, with the Jordan chain condition  (\ref{2nd}) being a 
particular consequence of 
the general derivative property (\ref{pderiv}). 
Then under the scaling similarity reduction, the entries of the determinant (\ref{piidet}) and the generating function (\ref{pgenfn}) arise from (\ref{schur}) by 
replacing $t_1\to z$, $t_3\to -\tfrac{4}{3}$, and all other $t_i\to 0$. In a similar manner, for the PII hierarchy, which arises by taking scaling similarity reductions of the higher flows of the mKdV hierarchy (see e.g. \cite{chj}), one can obtain   
the rational 
solutions by replacing $x=t_1\to z$, fixing a non-zero value of the appropriate time $t_{2i-1}$, and setting all the other times to $0$.

\section{Ohyama polynomials 
and BTs for  Painlev\'e III ($D_7$)}\label{ohya} 

%\section{Darboux Transformations of Painlev\'e III ($D_7$) and Ohyama Polynomials} \label{DarbouxSection}

The Ohyama polynomials $\rho_n(s) $ are a sequence of polynomials %in a variable $s$ given 
defined recursively by the relation
\begin{equation}\label{ohrec}
(s+n)\rho_{n}^2-2s\rho_{n}\ddot{\rho}_{n}
+2s(\dot{\rho}_{n})^2-2\rho_{n}\dot{\rho}_{n} = 
\begin{cases}
    \rho_{n+1}\rho_{n-1} & \text{for $n$ odd,}\\
    s\rho_{n+1}\rho_{n-1}& \text{for $n$ even,}
\end{cases} 
\end{equation}
for $\rho_0=\rho_{\pm1}=1$, where the dots denote  differentiation with respect to the variable $s$, and $n\in\Z$. Despite it not being obvious from the form of this relation, it has been proven  \cite{ohyama2006studies} that each  $\rho_n$ is a monic polynomial  
%for all $n$ and are in fact monic 
in $s$, with integer coefficients, and $\rho_n(0)\neq0$.  As we shall see, these polynomials are in direct  correspondence with %can be directly related to 
the algebraic solutions of a special  %$\gamma=0$ 
case  of the Painlev\'e III equation, 
given by (\ref{P3D7}), 
arising as particular solutions when the parameter $\be$ therein is an even integer.   
%also known as the . This makes them  
As such, they play an analogous role  to the 
Yablonskii–Vorob’ev polynomials for Painlev\'e II, as in the previous section, and to other families of polynomials like the Umemura polynomials \cite{kajiwara1999umemura},  and the Okamoto polynomials \cite{clarkson2003third, okamoto}, which  are associated with another family of  
rational solutions of Painlev\'e III, %Painlev\'e II, 
and rational solutions of Painlev\'e IV, 
respectively. However, unlike these other polynomial families, until now,  no Wronskian or other determinantal 
representation was previously known for the Ohyama polynomials. In this section, we outline 
how such a 
representation arises from Darboux transformations.

The %$\gamma=0$ case of the 
Painlev\'e III ($D_7$) equation, as in (\ref{P3D7}), can be derived from the system of Hamilton's equations  
\beq\label{hampiii} 
\begin{array}{rcrl}
zQ' & = & Q(2PQ-\ka)-z=& \frac{\partial \rh}{\partial P}, \\
zP' & = & -P(2PQ-\ka)+z =& -\frac{\partial \rh}{\partial Q},
\end{array}
\eeq 
where the prime denotes the $z$ derivative, and 
\beq\label{piiih}
\rh = Q^2P^2-\ka \,QP-z(Q+P), \qquad \ka = \be+1. 
\eeq
By eliminating $Q$ from the system, $P$ is found to satisfy the equation  (\ref{P3D7}), that is 
$$ 
P'' 
=\frac{1}{P}\left( P'\right)^2-\frac{1}{z}\left( P' \right) +\frac{1}{z}(2P^2-\be)-\frac{1}{P}
$$ 
while if $P$ is  eliminated instead then $Q=P_+$  is found to satisfy the same ODE but 
with $\be\to \be +2$, and by reversing the roles of $P$ and $Q$ one 
can shift $\be$ down by 2, leading to 
a BT for (\ref{P3D7}) together with its inverse, 
namely  the pair of transformations 
\begin{equation} \label{Backlund}
    P\mapsto P_{\pm}=\frac{z(\mp P'+1)}{2P^2}+\frac{(\pm1+\beta)}{2P}, \qquad \beta\mapsto \beta\pm 2. 
\end{equation}
The forward shift can be written as a birational transformation $T_+$ acting on the extended phase space 
with coordinates $(Q,P,z,\ka)$, given by 
\beq\label{piiibir}
T_+: \qquad \begin{cases}
    Q & \mapsto \, -P +\frac{\ka+1}{Q}+\frac{z}{Q^2}\\
    P  &\mapsto \, Q \\    z &\mapsto \, z \\
    \ka &\mapsto \, \ka+2 ,
\end{cases} 
\eeq
preserving a contact 2-form $\Omega$, namely  
$$ 
 T_+^*\Omega = \Omega, \qquad \Omega = \dd P \wedge \dd Q -
\frac{1}{z}\dd \rh\wedge \dd z, 
$$
and there is a similar set of expressions 
defining the inverse $T_-=T_+^{-1}$. 
Moreover, if the sequence of Hamiltonians 
obtained under the iterated action of $T_+$ is  indexed %labelled 
by the parameter $\ka$, then 
\beq\label{hshift} 
\rh_{\ka}= T_+^*(\rh_{\ka-2}) =\rh_{\ka-2}-\frac{z}{P}-\ka+1.
\eeq
(For the full set of affine Weyl group symmetries of the 
Painlev\'e III ($D_7$) equation, see \cite{ohyama2006studies}.)

A deeper insight into the structure of the BT $T_+$, 
and the understanding of its connection 
with confluent Darboux transformations,  
was achieved  due to the investigations in 
 \cite{hone1999associated}, which were further 
 clarified in \cite{barnes2022similarity}, 
 where it was shown that the 
 Painlev\'e III ($D_7$) equation 
 (\ref{P3D7}) arises as a similarity 
 reduction of the Camassa-Holm equation 
\beq\label{ch}
u_t - u_{xxt}+3uu_x =uu_{xxx}+2u_xu_{xx}. 
\eeq 
Although the full details are somewhat involved 
(cf.\ Theorem 3.2 in \cite{barnes2022similarity}), the similarity solutions 
of the PDE 
(\ref{ch}) can be specified in parametric form by the hodograph 
transformation 
\beq\label{hodo}
u(x,t) = \frac{1}{2t}\left(\frac{z}{P(z)}+\be\right) 
, \quad  
x-\frac{\be}{2}\log t =\log\left(\frac{\phi_-(z)}{\phi_+(z)}\right) +\mathrm{const}, 
\eeq 
where $P(z)$ is a solution 
of (\ref{P3D7}), and $\phi_{\pm }$ are two solutions 
of an associated 
Schr\"odinger equation 
\begin{equation} \label{painleveschrodinger}
    \left( \frac{\dd^2}{\dd z^2} + V \right)\phi_\pm = 0,
\end{equation}
with the potential $V=V(z)$ defined in terms of $P(z)$ by 
\begin{equation}\label{VP}
    V= -\frac{1}{4P^2}\left(\left(P' \right)^2-1 \right)+\frac{1}{2zP}\left(P'-2P^2+\beta \right),
\end{equation} 
and these two solutions of 
(\ref{painleveschrodinger}) 
are constrained %completely determined 
by the 
requirement that their product is $P$ and their Wronskian is 1: 
\beq\label{psipmdet} 
P=\phi_+\phi_-, \qquad Wr(\phi_+,\phi_-)=1. 
\eeq 

As was explained in  \cite{hone1999associated}, 
the fact that the Camassa-Holm equation is related 
to a negative KdV flow  means that the BT $T_+$ for (\ref{P3D7}) can be obtained from a (confluent)  
Darboux transformation acting on the 
Schr\"odinger 
operator  in (\ref{painleveschrodinger}), 
where the potential $V$ is viewed as 
coming from a similarity %reduction of 
solution of a member of  
the KdV hierarchy. (However, note that the 
variables $x,t$ in (\ref{ch}) are not directly related 
to $x,t$ in the previous section.) Indeed, observe that for any 
%sequence of solutions
solution $P(z)$ of the    Painlev\'e III ($D_7$) equation, we can introduce a tau function $\si (z)$, 
which is defined by considering the quantity 
\beq\label{etadef}
\eta(z)=-P(z) - \frac{z}{2}\,V(z) 
\eeq 
and then a direct calculation  using (\ref{P3D7}) 
shows that 
\beq\label{etaV}
\eta'(z)=\frac{1}{2}V(z).  
\eeq
Thus, if %the tau function 
$\si$ is related to $\eta$ by 
\beq\label{etareln} 
\eta(z) = \frac{\dd}{\dd z}\big(\log \si (z)\big), 
\eeq  
then it transpires that 
the corresponding KdV potential $V$ in (\ref{VP}) 
is given in terms of the same tau function by the 
standard relation 
\beq\label{Vsig}
V(z) = 2\frac{\dd^2}{\dd z^2}\big(\log \si (z)\big).   
\eeq 
Then in turn, by rearranging (\ref{etadef}) and 
substituting for $\eta$ and $V$ in terms of $\si$, 
it follows that $P$ is specified by the tau function   
according to the formula 
\beq\label{Psig}
P(z) = -\frac{\dd}{\dd z}
\left(z\frac{\dd}{\dd z}\big(\log \si (z)\big)\right).   
\eeq 
The connection between (\ref{Backlund}) 
and Darboux transformations is explained  by the next result. 

\begin{lemma}\label{dtlem} Given a solution $P(z)$ 
of the Painlev\'e III ($D_7$) equation, let 
\beq\label{ypmdef} 
y_\pm = \frac{P'\mp 1}{2P}. 
\eeq 
Then the BT $T_+$ and its inverse $T_-=T_+^{-1}$, 
as in (\ref{Backlund}), can be expressed as 
\beq\label{altbt}
P_\pm = P  - \frac{\dd}{\dd z}\big( zy_{\pm}\big). 
\eeq 
Moreover, the 
associated potential, defined by (\ref{VP}), 
can be written as 
\beq \label{Vfactors} 
V=-y_+'-y_+^2 = -y_-'-y_-^2 , 
\eeq 
and 
the corresponding action of $T_{\pm}$ on  $V$ is 
equivalent to a Darboux transformation, 
being given by 
\beq\label{dtformula}
V_\pm =T_{\pm}^*(V)=V+2y_{\pm}'. 
\eeq 
\end{lemma}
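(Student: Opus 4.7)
For parts (1) and (2), my approach is direct calculation, using (\ref{P3D7}) to eliminate $P''$ wherever it appears. For (1), I would expand $(z y_\pm)' = y_\pm + z y_\pm'$ with $y_\pm = (P'\mp 1)/(2P)$, so that $y_\pm' = P''/(2P) - P'(P'\mp 1)/(2P^2)$, and substitute the expression for $P''$ furnished by (\ref{P3D7}); collecting terms in $1/P$ and $1/P^2$ should reduce $P - (z y_\pm)'$ precisely to the formula (\ref{Backlund}) for $P_\pm$. Part (2) is handled in the same spirit: form $-y_\pm' - y_\pm^2$, substitute for $P''$ via (\ref{P3D7}), and observe that the terms whose sign depends on the choice of $\pm$ cancel in the end, so that both $y_+$ and $y_-$ factorize the same potential $V$ and the result matches (\ref{VP}).

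For part (3), the strategy is to apply part (2) in the BT-transformed variables. Since $P_\pm$ solves (\ref{P3D7}) with parameter $\beta \pm 2$, part (2) applied with $P \to P_\pm$, $\beta \to \beta \pm 2$ yields the factorization $V_\pm = -Y_\pm' - Y_\pm^2$ with $Y_\pm \defeq (P_\pm' \pm 1)/(2P_\pm)$ (the choice whose constant matches the $\pm$ of the BT). Combining with (2) in the original variables to rewrite $V = -y_\pm' - y_\pm^2$, the target identity $V_\pm = V + 2y_\pm'$ becomes $V_\pm = y_\pm' - y_\pm^2$, which coincides with $-Y_\pm' - Y_\pm^2$ precisely when $Y_\pm = -y_\pm$. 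A short manipulation over a common denominator gives
\beq
Y_\pm + y_\pm = \frac{(PP_\pm)' \mp (P_\pm - P)}{2 P P_\pm},
\eeq
so the whole matter reduces to the first-order identity
\beq\label{keyidentity}
(P P_\pm)' = \pm(P_\pm - P).
\eeq

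The main work lies in establishing (\ref{keyidentity}), and the cleanest route is via the Hamiltonian formulation (\ref{hampiii}). Solving the second Hamilton equation for $Q$ gives $Q = z(1-P')/(2P^2) + \kappa/(2P)$, which coincides with $P_+$ as in (\ref{Backlund}); adding $P$ times the $zQ'$-equation to $Q$ times the $zP'$-equation causes the nonlinear $2P^2Q^2$ and $\kappa PQ$ contributions to cancel, leaving $z(PQ)' = z(Q-P)$, i.e.\ $(PP_+)' = P_+ - P$. The lower-sign case $(PP_-)' = P - P_-$ follows by the same argument applied in the Hamiltonian system with parameter $\beta - 2$, where the roles of $P$ and $P_-$ are interchanged (since $T_+T_-=\mathrm{id}$, so in the shifted system one has $T_+(P_-)=P$). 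Together with the reductions above, this yields $V_\pm = y_\pm' - y_\pm^2 = V + 2y_\pm'$, completing the proof.
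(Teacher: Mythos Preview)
Your proposal is correct. Parts (1) and (2) follow the same direct-computation approach as the paper.

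For part (3), your route differs from the paper's in an interesting way. The paper proceeds by computing the action of $T_+$ on the auxiliary quantity $\eta=-P-\tfrac{z}{2}V$: using the Hamiltonian expression \eqref{etaform} together with the shift \eqref{hshift}, one finds $T_+^*(\eta)=\eta+y_+$, and then differentiating via $\eta'=\tfrac12 V$ gives $V_+=V+2y_+'$. You instead apply part (2) to the transformed solution $P_\pm$, pick the appropriate factorization $V_\pm=-Y_\pm'-Y_\pm^2$ with $Y_\pm=(P_\pm'\pm1)/(2P_\pm)$, and reduce everything to the first-order identity $(PP_\pm)'=\pm(P_\pm-P)$, which drops out immediately from summing $P$ times the $Q$-equation and $Q$ times the $P$-equation in \eqref{hampiii} (using $Q=P_+$). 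Your argument is somewhat more elementary and self-contained for the purposes of this lemma; the paper's argument has the side benefit of producing the formula $T_+^*(\eta)=\eta+y_+$ in \eqref{Tshift}, which is then used in Remark~\ref{taurem} and Proposition~\ref{recprop} to derive the tau-function relations. Both approaches ultimately lean on the Hamiltonian system, but through different combinations of the equations.
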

\begin{proof}
A direct calculation, using the 
Painlev\'e III ($D_7$) equation (\ref{P3D7}), shows that 
$$\begin{array}{rcl}
y_+' =\frac{\dd}{\dd z}\left(\frac{P'- 1}{2P}\right) 
& = & \frac{P''}{2P}-\frac{(P')^2}{2P^2}+\frac{P'}{2P^2} 
\\
& = & -\frac{1}{z}\left(\frac{P'+\be}{2P}-P\right)+
\frac{P'-1}{2P^2} \\
& = & -\frac{1}{z}\left(y_+ + \frac{\be+1}{2P}-P\right)+
\frac{P'-1}{2P^2},  
\end{array}
$$
and hence,  from the formula for $T_+$ in (\ref{Backlund}), 
$y_+'=z^{-1}(y_+ + P -P_+)$, which yields the $+$ case 
of (\ref{altbt}), and a similar 
calculation yields the $-$ case. 
Furthermore, for both choices of sign we find  
$$
-y_\pm'-y_{\pm}^2 = -\frac{P''}{2P}+\frac{(P')^2-1}{4P^2}, 
$$
and upon using the Painlev\'e III ($D_7$) equation once again, to remove the $P''$ term, 
we arrive at the formula (\ref{VP}) for $V$.  
Thus, since (\ref{Vfactors}) holds, we have two different 
factorizations of the Schr\"odinger operator with this potential: 
$$ 
\frac{\dd^2}{\dd z^2} + V = 
\left(\frac{\dd}{\dd z} +y_+\right) 
\left(\frac{\dd}{\dd z} -y_+\right) 
=
\left(\frac{\dd}{\dd z} +y_-\right) 
\left(\frac{\dd}{\dd z} -y_-\right).  
$$
Now let us introduce the corresponding pair of eigenfunctions with eigenvalue zero: 
$$
y_{\pm}=\frac{\dd}{\dd z} \log \phi_{\pm}\implies 
\left(\frac{\dd^2}{\dd z^2} + V \right) \phi_{\pm}= 0.  
$$
Then from the definitions of $y_\pm$ in terms of $P$, by adding we obtain  
$$
\frac{\dd}{\dd z} \log (\phi_+\phi_-) = y_+ + y_- =
\frac{\dd}{\dd z} \log P \implies P = \phi_+\phi_-, 
$$
where we have fixed an overall normalizing constant, 
while by subtracting we find 
$$ 
\frac{\dd}{\dd z} \log \left(\frac{\phi_-}{\phi_+}\right) = y_- - y_+ = \frac{1}{P}=\frac{1}{\phi_+\phi_-} 
\implies Wr(\phi_+,\phi_-)=1. 
$$
So we have verified the assertions in (\ref{psipmdet}). To relate  the BT $T_+$ to a Darboux transformation, it is helpful to determine its action on the quantity $\eta$  defined by (\ref{etadef}). After a slightly tedious computation,  
using the second equation of the system (\ref{hampiii}) to  substitute  
$P'=1-z^{-1}P(2PQ-\ka)$ in the expression (\ref{VP}), 
we find that $\eta$ is very closely linked with the 
Hamiltonian, being given by 
\beq\label{etaform} 
\eta= \frac{1}{2z}\Big(\rh+PQ+\tfrac{\ka(\ka-2)}{4}\Big). 
\eeq 
(In fact, $\eta$ is the same as the Hamiltonian denoted $H$ in \cite{ohyama2006studies}, where a different Hamiltonian structure is used.)    
Then by applying the BT to shift all the variables in   (\ref{hshift}), we have 
$$ 
T_+^*(\rh) = \rh-\frac{z}{Q}-\ka-1,  
$$ so from 
(\ref{piiibir}) we get 
$$ \begin{array}{rcl}
T_+^*(\eta) & = &
\frac{1}{2z}\left(\rh-\frac{z}{Q}-\ka-1
+Q\Big(-P+\frac{\ka+1}{Q}+\frac{z}{Q^2}\Big)+\frac{(\ka+2)\ka)}{4}\right) \\
& = & \frac{1}{2z}\Big(\rh -QP +\tfrac{\ka(\ka+2)}{4}\Big) \\
& = & \eta -\frac{1}{2z}\Big( 2PQ-\ka \Big). 
\end{array}
$$
By using the formula for $P'$ in (\ref{hampiii}) 
once again, we see that 
$y_+=-\tfrac{1}{2z}(2PQ-\ka)$, and so 
\beq\label{Tshift} 
T_+^*(\eta) =\eta +y_+. 
\eeq 
Hence,  by differentiating and applying the relation 
(\ref{etaV}), 
we see that 
$$ 
T_+^*(V) =V +2y_+' = V+2\frac{\dd^2}{\dd z^2}
\Big(\log\phi_+\Big), 
$$
which shows that $T_+$ corresponds to a 
Darboux transformation acting on the potential $V$. 
An analogous chain of reasoning produces the $-$ case 
of (\ref{dtformula}), which shows that $T_-=T_+^{-1}$ 
also corresponds to a 
Darboux transformation. 
\end{proof}

\begin{remark}\label{taurem}
We can introduce additional tau functions that appear under the action of the BT and its inverse, so that 
$$ 
T_{\pm}(\eta)=\frac{\dd}{\dd z} \log \si_{\pm}, 
$$
and then from  
(\ref{Tshift}) and the analogous formula for $T_-$, 
it follows that  
\beq\label{ypm}
y_\pm = \frac{\dd}{\dd z} \left(\log \Big(\frac{\si_\pm}{\si}\Big) \right). 
\eeq 
Also, the formula (\ref{etaform}) implies that 
$$ 
\eta = \frac{1}{2z}\Big(\rh + \tfrac{\ka^2}{4}\Big) - \frac{1}{2}y_+ ,$$ 
which means that the Hamiltonian can be   expressed 
in terms of tau functions as 
$$
\rh = z \frac{\dd}{\dd z} \Big(\log (\si\si_+) \Big)-\frac{\ka^2}{4}. 
$$
\end{remark}

We now introduce some recursion relations for the tau functions of the Painlev\'e III ($D_7$) equation, which 
will be key to understanding how the Ohyama polynomials, as well as the relation (\ref{ohrec}), appear in this context. 

\begin{proposition}\label{recprop} Suppose that 
$\si_-,\si,\si_+$ are three adjacent tau functions for the equation (\ref{P3D7}), connected via the action of 
the BTs $T_-$ and $T_+$.   Then these 
three tau functions are connected by a bilinear equation 
of Toda lattice type, namely 
\beq\label{sigtoda}
 \si_+\si_- =C\Big( z\big((\si')^2-\si\si'' \big) - \si\si' \Big), 
\eeq
and also by a Burchnall-Chaundy relation, that is 
\beq\label{sigbc} 
\si_-' \si_+ - \si_-\si_+' = C \, \si^2\, 
\eeq 
where $C$ is a non-zero constant. 
\end{proposition}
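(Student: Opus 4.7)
The plan is to reduce both identities to algebraic consequences of the identification $\phi_\pm = C_\pm\, \sigma_\pm/\sigma$, valid for some non-zero constants $C_\pm$. This identification follows immediately on comparing (\ref{ypm}) in Remark \ref{taurem} with the definition $y_\pm = (\log\phi_\pm)'$ used in the proof of Lemma \ref{dtlem}: since both expressions agree as logarithmic derivatives, $\phi_\pm$ and $\sigma_\pm/\sigma$ can differ only by multiplicative constants, which will be absorbed into the overall constant $C$ appearing in (\ref{sigtoda}) and (\ref{sigbc}).

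For the Toda-type bilinear (\ref{sigtoda}), I would combine the two available expressions for $P$. On the one hand, (\ref{psipmdet}) gives
\begin{equation*}
P \,=\, \phi_+\phi_- \,=\, C_+C_-\,\frac{\sigma_+\sigma_-}{\sigma^2}.
\end{equation*}
On the other hand, formula (\ref{Psig}) expresses $P$ directly in terms of $\sigma$; expanding the outer derivative,
\begin{equation*}
P \,=\, -\frac{\dd}{\dd z}\!\left(z\,\frac{\sigma'}{\sigma}\right) \,=\, \frac{z\big((\sigma')^2 - \sigma\sigma''\big) - \sigma\sigma'}{\sigma^2}.
\end{equation*}
Equating the two expressions and clearing the common denominator $\sigma^2$ yields (\ref{sigtoda}) with $C = 1/(C_+C_-)$.

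For the Burchnall--Chaundy relation (\ref{sigbc}), I would use the Wronskian normalization $Wr(\phi_+,\phi_-)=1$ from (\ref{psipmdet}). Substituting $\phi_\pm = C_\pm\sigma_\pm/\sigma$ and expanding each logarithmic derivative, the cross terms involving $\sigma'$ cancel, leaving
\begin{equation*}
1 \,=\, \phi_+\phi_-' - \phi_-\phi_+' \,=\, \frac{C_+C_-\,(\sigma_-'\sigma_+ - \sigma_-\sigma_+')}{\sigma^2},
\end{equation*}
which rearranges to (\ref{sigbc}) with the same constant $C = 1/(C_+C_-)$.

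The only real task beyond routine algebra is the consistency check that a single constant $C$ serves for both identities, but this is automatic: both computations involve precisely the same factor $C_+C_-$ of normalization constants coming from $\phi_\pm = C_\pm\sigma_\pm/\sigma$. Once that identification is in hand, there is no significant obstacle to overcome; the heavy lifting has already been done in establishing (\ref{psipmdet}), (\ref{Psig}) and (\ref{ypm}) in Lemma \ref{dtlem} and Remark \ref{taurem}.
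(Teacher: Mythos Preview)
Your proposal is correct and follows essentially the same approach as the paper. The paper likewise obtains $P = C^{-1}\sigma_+\sigma_-/\sigma^2$ from the identification $\phi_\pm = K_\pm\sigma_\pm/\sigma$ and equates it with (\ref{Psig}) to get (\ref{sigtoda}); for (\ref{sigbc}) the paper subtracts the definitions (\ref{ypmdef}) to obtain $y_- - y_+ = 1/P$ and then substitutes (\ref{ypm}) and (\ref{Prat}), which is exactly the Wronskian computation you carry out, just phrased in terms of logarithmic derivatives rather than $\phi_\pm$ directly.
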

\begin{proof}
Either by adding the definitions (\ref{ypmdef}) of $y_\pm$ 
in Lemma \ref{dtlem}, and integrating the logarithmic derivative that appears on both sides after substituting 
in with the tau function expressions (\ref{ypm}), or by observing that the eigenfunctions  
in (\ref{psipmdet}) must be given by 
$$\phi_+ = K_+\, \frac{\si_+}{\si}, \qquad 
\phi_- = K_-\, \frac{\si_-}{\si} $$ 
for some non-zero constants $K_\pm$, 
we see that the solution $P$ of 
the Painlev\'e III ($D_7$) equation is given in terms of these three tau functions by the ratio 
\beq\label{Prat} 
P = C^{-1}\, 
\frac{\si_+\si_-}{\si^2}, 
\eeq 
for some constant $C=1/(K_+K_-)\neq 0$. Upon equating this with the logarithmic derivative  
in (\ref{Psig}), and clearing $\si^2$ from the denominator, the bilinear equation 
(\ref{sigtoda}) is obtained. 
If instead  the two definitions (\ref{ypmdef}) are 
subtracted one from the other, then we have 
$$ 
y_- - y_+ = \frac{1}{P}
$$ 
so that from substituting the  expressions (\ref{ypm}) on the left-hand side and the ratio (\ref{Prat}) on the 
right-hand side above, after clearing the 
denominator $\si_+\si_-$, this produces (\ref{sigbc}).  
\end{proof}    
\begin{remark} Both of the bilinear relations for the tau functions can be written somewhat more concisely by using the  Hirota  derivative $D_z$: the Toda-type relation is 
$$
\si_+ \si_- + C\Big(\tfrac{z}{2}\,D_z^2\, \si\cdot \si +\si \,\si' \Big) = 0,  
$$
while the Burchnall-Chaundy relation is 
$$ 
D_z\,  \si_-\cdot\si_+ = C\, \si^2.  
$$
\end{remark}

The algebraic solutions of the Painlev\'e III ($D_7$) equation arise for even 
integer values of the parameter $\be$, 
and are related to a family of ramp-like 
similarity solutions of the Camassa-Holm equation (see \cite{barnes2022similarity}). The PDE (\ref{ch}) 
has the elementary solution 
$$ 
u(x,t) = \frac{x}{3t}, 
$$
which is the same as the ramp solution for the inviscid 
Burgers' (Hopf) equation $u_t +3uu_x=0$, and is a 
particular similarity solution of the form 
(\ref{hodo}) with $\be=0$, corresponding to the solution 
$$ 
P = \left(\frac{z}{2}\right)^{1/3}
$$ 
of (\ref{P3D7}) for this value of $\be$.  
The action of one of the BTs (\ref{Backlund}) either raises or lowers  
the value of the parameter by 2 with each application, so by taking $P_0=\zeta$ as the seed solution, with 
$\zeta
=(z/2)^{1/3}$,  
a family of algebraic solutions is obtained for all even integer values $\beta=2n$, which we denote by 
$P_n$ for $n\in\Z$, and these are all rational functions   
of $\ze$; a few of these solutions are presented in the first row of Table \ref{FirstExamples}.     
Similarly, 
since the action of each BT or Darboux transformation increases or decreases $n$ by 1 at each step, 
it will be convenient to index all relevant 
quantities with this integer, while for the 
independent variables it will be necessary to switch between $\ze$ and the variables 
$$z=2\zeta^3, \qquad s=3\zeta^2. $$
The majority of the subsequent formulae are written most simply in terms of 
$\ze$, but in all of the Wronskians the derivatives  are taken with respect to the variable $z$, while the Ohyama polynomials defined by 
(\ref{ohrec}) are polynomials in $s$.

The precise connection between the Ohyama polynomials and the algebraic solutions of the Painlev\'e III ($D_7$) equation is that, up to multiplying by certain 
$n$-dependent gauge factors  and a change of independent variable, the polynomials $\rho_n(s)$ are 
equivalent to the tau functions $\si_n(z)$. 
Now to fix our notation: we will be considering the sequence of potentials $V_n=V_n(z)$ associated with the 
algebraic solutions $P_n(z)$. In terms of tau functions
$\si_n(z)$, we have 
$$ 
V_n = 2 \frac{\dd^2}{\dd z^2} (\log \si_n), 
\qquad 
P_n = -\frac{\dd}{\dd z}\left(z \frac{\dd}{\dd z}( \log \si_n)\right),
$$
while the action of the corresponding (confluent) 
Darboux transformations on the potentials can be written 
in terms of eigenfunctions $\phi_n, \tilde{\phi}_n$, 
so that 
$$ 
 V_{n+1}=V_{n}+2\frac{\dd^2}{\dd z^2}(\log\phi_{n}), 
 \qquad 
 V_{n-1}=V_{n}+2\frac{\dd^2}{\dd z^2}(\log\tilde{\phi}_{n}). 
$$
Comparing this with the notation used above in Lemma \ref{dtlem} and 
Proposition \ref{recprop}, we see that for $\ka=2n+1$ we have a solution $P=P_n$ of (\ref{P3D7}), with 
two adjacent solutions $T_\pm^*(P) = P_{n\pm 1}$ obtained 
by the action of the BT that shifts the parameter one step  up/down;  
the associated potential $V=V_n$ gets sent to a new potential $V_+=V_{n+1}$, via the Darboux transformation generated by the 
eigenfunction $\phi_+=\phi_n$ (with eigenvalue zero) satisfying  the Schr\"odinger equation  \eqref{painleveschrodinger},  
or is sent 
to the potential $V_-=V_{n-1}$, by the Darboux transformation generated by the 
eigenfunction $\phi_-=\tilde{\phi}_n$. 
Finally, our goal will be to construct the Jordan chain 
for the associated sequence of  confluent Darboux transformations applied to the initial potential 
\beq\label{V0}
V_0 = \frac{5}{36z^2} - \frac{1}{4(z/2)^{\frac{2}{3}} }
=\frac{(5-36\ze^4)}{144\ze^6}, 
\eeq
which will allow us to write each potential in the form  
\beq
 \label{Transform} 
 V_{n}=V_{0}+2\frac{\dd^2}{\dd z^2}(\log\theta_{n}) , 
\eeq 
where $\theta_n$ is a Wronskian built from generalized eigenfunctions $\psi_i$. 
%pair of 

Any consecutive triple of tau functions 
$\si_{n-1},\si_n,\si_{n+1}$  can be identified with the triple  
$\si_-,\si,\si_+$ in Proposition \ref{recprop}, 
and with a suitable choice of normalization we 
can write the Wronskians and eigenfunctions in terms of 
these tau functions, which leads to 
\beq
\label{thetasigma}  \theta_{n}=\frac{\sigma_{n}}{\sigma_0}, 
\eeq 
meaning that each of the Wronskians $\theta_n$ can be 
seen to be a kind of renormalized tau function,  
as well as the relations  
\beq 
\label{phitheta}   % {thetasigma} 
\phi_{n}=\frac{\sigma_{n+1}}{\sigma_{n}}=\frac{\theta_{n+1}}{\theta_{n}}, \quad \mathrm{and} \quad 
    %\label{tildephitheta}    
    \tilde{\phi}_{n}=\frac{\sigma_{n-1}}{\sigma_{n}}=\frac{\theta_{n-1}}{\theta_{n}}=(\phi_{n-1})^{-1}.
\eeq 

Note that, for each $n$, the corresponding quantity $\eta=\eta_n$ is determined by substituting $P=P_n$ and $V=V_n$ into (\ref{etadef}), which means that, by (\ref{etareln}), the corresponding tau function is given by 
$$ 
\si_n = \exp \left(\int \eta_n\, \dd z\right ), 
$$
and hence is fixed up to an overall constant multiplier 
(coming from the implicit integration constant above). 
It is instructive to list here the first few tau 
functions, for $-3\leq n\leq 3 $: 
$$
\begin{array}{ll}
        \sigma_0=\zeta^{-\frac{5}{24}}e^{-\frac{9}{8}\zeta^4}, \quad & 
    \sigma_{\pm1}=3^{1/4}\zeta^{\frac{7}{24}}e^{-\frac{9}{8}\zeta^4\mp\frac{3}{2}\zeta^2}, \\ %\quad 
    \sigma_{\pm2}=(3\zeta^2\pm1)\zeta^{-\frac{5}{24}}e^{-\frac{9}{8}\zeta^4\mp3\zeta^2},\qquad &
    \sigma_{\pm3}=3^{1/4}(9\zeta^4\pm12\zeta^2+5)\zeta^{\frac{7}{24}}e^{-\frac{9}{8}\zeta^4\mp\frac{9}{2}\zeta^2}.
   \end{array}
$$ 
Note that, compared with 
%we are taking the opposite sign of $n$ from 
\cite{barnes2022similarity} we have switched $n \rightarrow -n$, in order to be consistent with the convention used to define the Ohyama polynomials  by 
(\ref{ohrec}). Then, by the results of Proposition \ref{recprop}, this sequence of tau functions satisfies the bilinear Toda-type relation 
\begin{equation}
    \label{tauToda}
    \sigma_{n+1}\sigma_{n-1}+C_n \left(\tfrac{z}{2}D^2_z\sigma_{n}\cdot\sigma_{n}+\sigma_{n}\sigma_{n}'\right)=0,
\end{equation}
where the constant $C$ from (\ref{sigtoda}) 
(which depends on the choice of scaling) 
is allowed to depend on $n$, while from (\ref{sigbc}) we also have the Burchnall-Chaundy relation 
\beq \label{taubc}
\si_{n-1}' \si_{n+1} -  \si_{n-1} \si_{n+1}' = C_n \, \si_n^2. 
\eeq 
The  scaling chosen  in \cite{barnes2022similarity} was to take $C_n=1$ for even $n$, and $C_n=3$ for odd $n$, 
but here we find it convenient to choose the scaling for  $\si_n$ to be such that 
 %however a scaling of $3^{\frac{1}{4}}$ to the odd $n$ tau allows simply 
$C_n=\sqrt{3}$  for all $n$. 
However, in several subsequent statements we will leave the choice of scaling arbitrary. 

It follows that, for a given choice of (non-zero) initial tau functions $\si_0$, $\si_1$, if $C_n$ has been fixed for all $n$, then all of the  $\si_n$ for $n\in\Z$ are completely determined     
from (\ref{tauToda}). 

\begin{lemma}\label{ohsiglem} 
The tau functions $\si_n(z)$ that satisfy (\ref{tauToda}) with $C_n=\sqrt{3}$ for all $n$, with initial conditions 
$\sigma_0=\zeta^{-\frac{5}{24}}e^{-\frac{9}{8}\zeta^4}$, 
$\sigma_{1}=3^{1/4}\zeta^{\frac{7}{24}}e^{-\frac{9}{8}\zeta^4-\frac{3}{2}\zeta^2}$, are given by 
\begin{equation}
      \label{sigmarho} \sigma_{n} = 
\begin{cases}
    3^{1/4}\zeta^{\frac{7}{24}}e^{-\frac{9}{8}\zeta^4-\frac{3}{2}n\zeta^2}\rho_{n}(3\zeta^2)& \text{for $n$ odd},\\
    \zeta^{-\frac{5}{24}}e^{-\frac{9}{8}\zeta^4-\frac{3}{2}n\zeta^2}\rho_{n}(3\zeta^2)& \text{for $n$ even},
\end{cases} 
\end{equation} 
where $z=2\ze^3$ and the Ohyama polynomials are evaluated 
at %the argument 
$s=3\ze^2$. 
\end{lemma}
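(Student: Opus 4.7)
The plan is to verify \eqref{sigmarho} by induction in $|n|$, exploiting the fact that \eqref{tauToda} with $C_n=\sqrt{3}$ is a three-term recursion that algebraically determines $\sigma_{n+1}$ from $\sigma_n,\sigma_{n-1}$ (and symmetrically $\sigma_{n-1}$ from $\sigma_n,\sigma_{n+1}$), so the bi-infinite sequence of tau functions is uniquely fixed by the two stated initial conditions $\sigma_0,\sigma_1$. Since $\rho_0=\rho_1=1$, the base cases $n=0,1$ hold tautologically on comparing the ansatz with the given $\sigma_0,\sigma_1$. What remains is the core step: show that substitution of the ansatz \eqref{sigmarho} into \eqref{tauToda} reduces, after cancellation of the gauge prefactors, to the Ohyama recursion \eqref{ohrec} in the variable $s=3\zeta^2$, with the parity-dependent extra factor of $s$ on the right-hand side for $n$ even appearing naturally.

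To carry out the reduction, I would first write $\sigma_n=g_n(\zeta)\,\rho_n(s)$ with $g_n$ the obvious elementary $n$-dependent gauge factor read off from \eqref{sigmarho}, and rewrite the bilinear equation \eqref{tauToda} in the logarithmic form
\[ \frac{\sigma_{n+1}\sigma_{n-1}}{\sigma_n^{2}} \;=\; -\sqrt{3}\bigl(z(\log\sigma_n)''+(\log\sigma_n)'\bigr), \]
which neatly separates the contributions from $g_n$ and $\rho_n$. Using the chain-rule data $z=2\zeta^3$, $s=3\zeta^2$, and $\tfrac{\dd}{\dd z}=(6\zeta^2)^{-1}\tfrac{\dd}{\dd\zeta}$, I would compute the elementary piece $z\alpha_n'+\alpha_n$ coming from $\alpha_n=(\log g_n)'$ and verify that the terms proportional to the fractional exponent $a_n\in\{7/24,-5/24\}$ cancel identically, leaving a clean residue linear in $\zeta$ and $1/\zeta$. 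Combined with the $\rho_n$-dependent terms and multiplied through by $\zeta$, the right-hand side should collapse exactly to $1/\sqrt{3}$ times $(s+n)\rho_n^{2}-2s\rho_n\ddot{\rho}_n+2s(\dot{\rho}_n)^{2}-2\rho_n\dot{\rho}_n$ divided by $\rho_n^{2}$, i.e.\ the left-hand side of \eqref{ohrec}.

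The final bookkeeping step is to match the ratio $g_{n+1}g_{n-1}/g_n^{2}$ on the left: I expect this to equal $3^{-1/2}\zeta^{-1}$ for $n$ odd and $3^{1/2}\zeta$ for $n$ even, which is exactly what is needed to reproduce the two cases of \eqref{ohrec}, since the $\zeta^{-1}$ absorbs the overall factor of $\zeta$ introduced when multiplying through, while the $3^{1/2}\zeta$ in the even case supplies the additional $s=3\zeta^{2}$ multiplying $\rho_{n+1}\rho_{n-1}$. The main obstacle is the bookkeeping of the three variables $z$, $\zeta$, $s$ and the parity-dependent gauge factors; once these are organised carefully and the cancellation of the $a_n$ terms is checked, the matching is purely algebraic, with no analytic subtlety because each gauge factor is nowhere vanishing on the domain of interest and $\rho_n(0)\neq 0$ by \cite{ohyama2006studies}, ensuring the recursion does not degenerate at any step.
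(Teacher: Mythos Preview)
Your proposal is correct and follows essentially the same route as the paper's proof: substitute the ansatz \eqref{sigmarho} into the Toda-type bilinear relation \eqref{tauToda}, cancel the gauge factors, and reduce the result to the Ohyama recursion \eqref{ohrec} in the variable $s=3\zeta^2$, then invoke uniqueness of the three-term recursion together with the matching initial conditions $\rho_0=\rho_1=1$. The paper presents the intermediate computation in Hirota-derivative form in the variable $\zeta$ (the relation \eqref{rhorelation}) rather than via your logarithmic reformulation, but the two computations are equivalent, and your explicit bookkeeping of the ratios $g_{n+1}g_{n-1}/g_n^{2}$ is exactly what produces the parity-dependent factor on the right-hand side.
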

\begin{proof}
As in intermediate step, for each choice of parity of $n$, we can substitute the expressions 
(\ref{sigmarho}) into the bilinear 
Toda-type equation \eqref{tauToda} with $C_n=\sqrt{3}$, in order to write  everything in terms of the variable
$\zeta=(z/2)^{\frac{1}{3}}$, and thus obtain
\begin{equation} \label{rhorelation}
      (3\zeta^2+n) \rho_{n}^2-\frac{1}{6\zeta}\left(\frac{\zeta}{2} D^2_{\zeta}\rho_{n}\cdot\rho_{n}+\rho_{n}\rho_{n,\ze} \right) = 
\begin{cases}
    \rho_{n+1}\rho_{n-1} ,& \text{for $n$ odd}\\
    3\zeta^2\rho_{n+1}\rho_{n-1},& \text{for $n$ even}, 
\end{cases} 
\end{equation}
where $D_\ze$ is the Hirota derivative with respect to $\ze$, and the additional subscript denotes an ordinary   derivative: 
$$ 
\rho_{n,\ze} =\frac{\dd}{\dd \ze}\, \rho_n(s).
$$
Note that, for the initial conditions at $n=0,1$, the prescribed choice of $\si_0$ and $\si_1$ implies that 
$\rho_0=1=\rho_1$. 
After making a further change of variables to
the independent variable $s=3\zeta^2$, the relation 
(\ref{rhorelation}) is transformed 
into the recurrence  
(\ref{ohrec}) for Ohyama's polynomials (where there, and throughout, the dot is used to mean the $s$ derivative). 
Since the two sequences $\si_n$ and $\rho_n$ are specified by an equivalent Toda-type relation, modulo the given $n$-dependent rescaling  
and the change of independent variables, and the two initial conditions match, this guarantees that $\si_n$ is specified in terms of the Ohyama polynomials by 
(\ref{sigmarho}) for all $n\in\Z$.    
\end{proof} 

As a consequence of the relation (\ref{taubc}), 
we can reverse the correspondence (\ref{sigmarho}) 
to obtain an analogous relation for $\rho_n(s)$. 
\begin{corollary}
The Ohyama polynomials also obey a relation 
of Burchnall-Chaundy type, namely  
     \begin{equation} \label{newrho}
    \rho_{n+1}\rho_{n-1}+\rho_{n+1}\dot{\rho}_{n-1}-\dot{\rho}_{n+1}\rho_{n-1} = 
\begin{cases}
    s\rho^2_{n}& \text{for $n$ odd,}\\
     \rho^2_{n}& \text{for $n$ even,}
\end{cases} 
\end{equation}
with initial conditions $\rho_0=\rho_{1}=1$, 
where the dot denotes the derivative with respect to $s$.
\end{corollary}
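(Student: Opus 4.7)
The plan is to deduce the relation (\ref{newrho}) by substituting the explicit form (\ref{sigmarho}) of the tau functions into the Burchnall-Chaundy relation (\ref{taubc}) with $C_n=\sqrt{3}$, and then simplifying. Since $n-1$ and $n+1$ have the same parity (opposite to that of $n$), in each of the two parity cases for $n$, the prefactors in $\sigma_{n-1}$ and $\sigma_{n+1}$ share a common algebraic/quartic-exponential factor and differ only in the quadratic exponential $e^{-\tfrac{3}{2}m\zeta^2}$, while $\sigma_n$ involves the opposite-parity algebraic prefactor. Introduce the abbreviations $A(\zeta)=3^{1/4}\zeta^{7/24}e^{-\tfrac{9}{8}\zeta^4}$ and $B(\zeta)=\zeta^{-5/24}e^{-\tfrac{9}{8}\zeta^4}$, so that a short calculation shows $A^2/B^2=\sqrt{3}\,\zeta$.

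First, I convert $z$-derivatives to $s$-derivatives using $z=2\zeta^3$, $s=3\zeta^2$, which gives $\dd/\dd z = (1/\zeta)\,\dd/\dd s$. Then, writing the prefactor of $\sigma_{n\pm 1}$ as $h_{n\pm 1}(\zeta)$, the Burchnall-Chaundy combination expands as
\begin{equation*}
\si_{n-1}'\si_{n+1}-\si_{n-1}\si_{n+1}'
= \bigl(h_{n-1}'h_{n+1}-h_{n-1}h_{n+1}'\bigr)\rho_{n-1}\rho_{n+1}
+ h_{n-1}h_{n+1}\bigl(\rho_{n-1}'\rho_{n+1}-\rho_{n-1}\rho_{n+1}'\bigr).
\end{equation*}
The key short computation is that, since $h_m$ depends on $m$ only through $e^{-\tfrac{3}{2}m\zeta^2}$, we get $(\log h_{n-1}-\log h_{n+1})' = (3\zeta^2)' = 1/\zeta$ (with prime $= \dd/\dd z$), so the first bracket collapses to $h_{n-1}h_{n+1}/\zeta$. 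Converting the remaining derivatives using $\rho'=\dot{\rho}/\zeta$, the whole left-hand side becomes $(h_{n-1}h_{n+1}/\zeta)\bigl[\rho_{n-1}\rho_{n+1}+\dot{\rho}_{n-1}\rho_{n+1}-\rho_{n-1}\dot{\rho}_{n+1}\bigr]$.

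Finally, I divide by the right-hand side $\sqrt{3}\,\sigma_n^2$. For $n$ even, $h_{n-1}h_{n+1}=A^2 e^{-3n\zeta^2}$ while $\sigma_n^2$ carries $B^2 e^{-3n\zeta^2}$, so the overall constant is $A^2/(\sqrt{3}\,\zeta B^2)=1$, yielding the "$\rho_n^2$" case; for $n$ odd the ratio flips, giving $B^2\cdot\sqrt{3}\zeta/(\sqrt{3}B^2\cdot\zeta)\cdot (A^2/B^2)\cdot\zeta=3\zeta^2=s$, yielding the "$s\rho_n^2$" case. The initial conditions $\rho_0=\rho_1=1$ are inherited from the prescribed $\sigma_0,\sigma_1$ in Lemma \ref{ohsiglem}. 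I expect no conceptual obstacle: the only thing to watch is the bookkeeping of prefactors and the consistent chain-rule reduction between $z$, $\zeta$ and $s$, which must in particular produce the parity-dependent factor $A^2/B^2=\sqrt{3}\zeta$ responsible for the two different cases on the right-hand side of (\ref{newrho}).
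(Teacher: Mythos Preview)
Your proposal is correct and follows exactly the approach indicated in the paper: substitute the expression (\ref{sigmarho}) for $\sigma_n$ in terms of $\rho_n$ into the Burchnall--Chaundy relation (\ref{taubc}) with $C_n=\sqrt{3}$, use the chain rule $\dd/\dd z=(1/\zeta)\,\dd/\dd s$, and collapse the prefactor ratio via $A^2/B^2=\sqrt{3}\,\zeta$ to obtain the two parity cases. The bookkeeping in your final sentence for the $n$ odd case is slightly garbled (the cleaner statement is that the required factor is $\sqrt{3}\,\zeta\cdot A^2/B^2=3\zeta^2=s$), but the computation and conclusion are sound.
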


Using \eqref{ohrec}, it was proved in \cite{ohyama2006studies} that $\rho_n(s)$ 
%that these $\rho_{n}$ 
are monic polynomials %in $s$ 
with integer coefficients, which remain invariant up to an overall sign after switching $n\to -n$ and $s\to -s$, with the precise  relation being 
\begin{equation} \label{rhosignsymmetry}
    \rho_{-n}(s) = \begin{cases}
        (-1)^{\frac{n^2}{4}}\,\,\,\rho_n(-s) \qquad\, \text{for even } n, \\
         (-1)^{\frac{n^2-1}{4}}\rho_n(-s) \quad\quad
         \text{for odd } n. 
    \end{cases}
\end{equation}
As a result of the latter property, it would be sufficient to state all subsequent 
results for $n\geq 0$ only, and just use this symmetry to obtain the corresponding statements for negative $n$, 
but for completeness we will usually present everything 
with both choices of sign. 

By using the formulae (\ref{sigmarho}) for even/odd $n$, 
the algebraic solutions of the equation (\ref{P3D7}) are 
manifestly rational functions of the variable $\ze$, being given in terms of the $\rho_n$ by 
\begin{equation} \label{painleverho}
       P_{n} = 
\begin{cases}
    %\frac{1}{3\zeta}\frac{\rho_{n+1}(3\ze^2)\rho_{n-1}(3\ze^2)}{\rho_{n}(3\ze^2)^2}
    \frac{1}{3\zeta}\rho_{n+1}(3\ze^2)\rho_{n-1}(3\ze^2)/ \rho_{n}(3\ze^2)^2
    & \text{for $n$ odd}\\
    %\zeta\frac{\rho_{n+1}(3\ze^2)\rho_{n-1}(3\ze^2)}{\rho_{n}(3\ze^2)^2}
    \,\zeta\,\,\rho_{n+1}(3\ze^2)\rho_{n-1}(3\ze^2)/ \rho_{n}(3\ze^2)^2 & \text{for $n$ even}.
\end{cases} 
\end{equation} 
Most of the rest of the paper is devoted to the proof of our main  
result, which can be stated as follows: 
\begin{theorem} \label{OhyamaWronskian} 
For %non-zero 
$n \in \mathbb{Z}$, the Ohyama polynomials 
$\rho_n(s)$ with $s=3(z/2)^{\frac{2}{3}}$ are given 
    %up to scaling 
in terms of Wronskian determinants,  
by 
\begin{equation} \label{rhowronskian}
% powers/factors of 3 were previously missing in front 
%for n odd and also dividing powers of s here!!!
    \rho_{n}=
    \begin{cases}
    3^{-\sfrac{1}{4}}\left({s}/{3}\right)^{\frac{|n|-1}{4}}\,\,Wr(W_{\pm1},\dots,W_n),& \text{for $n$ odd}\\
    \left({s}/{3}\right)^{\frac{|n|}{4}}\quad Wr(W_{\pm 1},\dots,W_n),& \text{for  $n$ even}
\end{cases}
\end{equation} 
where the $\pm$ sign is given by $\delta=\sgn(n)$ and 
each Wronskian is taken in the variable $z$. 
The entries $W_n$ are given by the polynomials:
\begin{equation}
   W_{n} = \sum^{2|n|-2}_{k=0}A_{|n|-1,k}\left(\frac{\pm s}{3}\right)^{k}
\end{equation}
where the coefficients $A_{m,k}$ can be non-zero only for $k,m\in\Z_{\geq 0}$ %\mathbb{N}_0$:
and are specified by the recursion 
$$
\frac{k+1}{3}A_{m,k+1}-A_{m,k} +\frac{3\sqrt{3}}{k}A_{m-1,k-2}=0, 
$$ 
together with 
$$A_{m,2m} = \left(\frac{3\sqrt{3}}{2}\right)^m \frac{3^{\frac{1}{4}}}{m!},\quad\text{and}\quad A_{m,0} = 0 \quad \text{for}\quad m\geq1, \quad  A_{0,k}= 0 \quad \text{for}\quad k\geq1 .
$$
\end{theorem}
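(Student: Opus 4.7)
The plan is to construct the Jordan chain of confluent Darboux transformations acting on the base Schr\"odinger operator with potential $V_0$ given in (\ref{V0}), and then to match the resulting Wronskian $\theta_n$ with the Ohyama polynomials via the tau function identification in Lemma~\ref{ohsiglem}.

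First I would identify the initial Jordan chain element. Since the seed solution is $P_0 = \zeta$, combining (\ref{phitheta}) at $n=0$ with the initial tau functions listed just before Lemma~\ref{ohsiglem} gives
$$ \psi_1 = \phi_0 = \sigma_1/\sigma_0 = 3^{1/4}\zeta^{1/2}e^{-3\zeta^2/2}. $$
I would then make the ansatz $\psi_n = \zeta^{1/2}e^{-3\zeta^2/2}g_n(u)$ with $u = \zeta^2 = s/3$, and use $z = 2\zeta^3$ to conjugate the Schr\"odinger operator. A direct computation yields
$$ \left(\frac{\dd^2}{\dd z^2}+V_0\right)\Big(\zeta^{1/2}e^{-3\zeta^2/2}g(u)\Big) = \zeta^{1/2}e^{-3\zeta^2/2}\cdot\frac{1}{9u}\Big(\partial_u^2 g - 3\partial_u g\Big), $$
so that the Jordan chain relation (\ref{Jchain}) reduces to the recursive ODE
$$ \partial_u^2 g_n - 3\partial_u g_n = 9u\, g_{n-1}, \qquad g_1 = 3^{1/4}. $$
A straightforward induction shows that each $g_n$ can be chosen as a polynomial in $u$ of degree exactly $2(n-1)$, with the freedom of adding a multiple of $g_1$ (which does not alter the Wronskian) fixed by requiring $g_n$ to have no constant term for $n\geq 2$.

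The next step is to introduce the rescaled polynomials $W_n(u) = (-\sqrt{3})^{n-1}g_n(u)$ and expand $W_n = \sum_k A_{n-1,k}u^k$. Matching powers of $u$ in the rescaled recurrence $\partial_u^2 W_n - 3\partial_u W_n = -9\sqrt{3}\,u\,W_{n-1}$ reproduces precisely the three-term recursion for $A_{m,k}$ stated in the theorem, together with the prescribed boundary values for $A_{m,2m}$ and $A_{m,0}$. Using the standard Wronskian identity $Wr_z(\chi f_1,\ldots,\chi f_n) = \chi^n Wr_z(f_1,\ldots,f_n)$ and multilinearity in the columns, I would then compute
$$ \theta_n = Wr_z(\psi_1,\ldots,\psi_n) = (-1)^{n(n-1)/2}\,3^{-n(n-1)/4}\,\zeta^{n/2}e^{-3n\zeta^2/2}\, Wr_z(W_1,\ldots,W_n). $$
By the confluent Darboux formula (\ref{ConfluentPotential}) combined with Proposition~\ref{recprop}, the potentials $V_0 + 2(\log\theta_n)''$ and $V_0 + 2(\log(\sigma_n/\sigma_0))''$ coincide, so $\theta_n$ and $\sigma_n/\sigma_0$ agree up to a multiplicative constant. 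Substituting the explicit form of $\sigma_n$ from Lemma~\ref{ohsiglem} and solving for the Wronskian of $W_1,\ldots,W_n$ then yields the claimed formulae for $\rho_n$ in the case $n>0$.

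The case $n<0$ is handled by restarting the Jordan chain from the second base eigenfunction $\tilde\phi_0 = \phi_-$; equivalently, one exploits the $\mathbb{Z}_2$ symmetry $\zeta \mapsto i\zeta$ (i.e.\ $s\mapsto -s$), under which the two initial eigenfunctions are interchanged and $W_n(s)$ is replaced by $W_n(-s)$, consistent with the symmetry (\ref{rhosignsymmetry}). The main obstacle is the bookkeeping in the final matching step: three distinct normalization factors must be reconciled --- the overall $(-1)^{n(n-1)/2}3^{-n(n-1)/4}$ arising from the $(-\sqrt{3})^{n-1}$ rescaling, the parity-dependent gauge factors $3^{1/4}\zeta^{7/24}$ versus $\zeta^{-5/24}$ appearing in Lemma~\ref{ohsiglem}, and the final prefactors $3^{-1/4}(s/3)^{(|n|-1)/4}$ or $(s/3)^{|n|/4}$ in the theorem statement --- and verifying that they combine correctly for each parity and sign of $n$ is where the real content of the proof lies.
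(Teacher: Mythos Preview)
Your approach is essentially the paper's: both construct the Jordan chain for $V_0$ explicitly (you in the variable $u=\zeta^2$, the paper in $\zeta$ via Proposition~\ref{psiproposition}), pull out the common prefactor $\zeta^{1/2}e^{-3\delta\zeta^2/2}$ with the Wronskian identity, and match against $\sigma_n/\sigma_0$ through Lemma~\ref{ohsiglem} to read off $\rho_n$. The only substantive difference is normalization: the paper works with the Jordan chain constant $a=\sqrt{3}$ from the outset so that $\theta_n=\sigma_n/\sigma_0$ holds exactly (this is where the Burchnall--Chaundy relation \eqref{sigbc} with $C_n=\sqrt{3}$ enters), whereas you take the convention of (\ref{Jchain}) and rescale by $(-\sqrt{3})^{n-1}$ afterwards, which accounts for your extra factor $(-1)^{n(n-1)/2}3^{-n(n-1)/4}$ and indeed cancels in the final matching.

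One small slip worth flagging in your matching step: equality of second logarithmic derivatives only gives $\theta_n = C e^{Az}\,\sigma_n/\sigma_0$, not agreement up to a multiplicative constant. The gap is harmless here because both sides have already been computed explicitly as $\zeta^{c}e^{-3n\zeta^2/2}$ times a polynomial in $\zeta^2$, and $e^{Az}=e^{2A\zeta^3}$ cannot be absorbed into such a form unless $A=0$; equivalently, the polynomial ansatz for $g_n$ has implicitly suppressed the second homogeneous solution $\tilde\psi_1\propto\zeta^{1/2}e^{+3\zeta^2/2}$ at each step, which is exactly what selects the $T_+$ branch of the Darboux transformation rather than $T_-$.
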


The preceding result is based on finding the Wronskian  determinants $\theta_n$ which encode the action of   
confluent Darboux transformations applied to the potential $V_0$, given by (\ref{V0}), together with the sequence of 
eigenfunctions, as in (\ref{phitheta}). The  details 
of the proof  are provided in the next section, 
while in section \ref{laxgen} this is followed by a discussion of the 
generating function for the entries of the Wronskian 
and its connection with the Lax pair for (\ref{P3D7}), 
and 
some further technical details about the intertwining operators of 
the Jordan chain are included in an appendix. 

Before proceeding with the complete proof, it is instructive to start by considering the 
first few generalized eigenfunctions  $\psi_i$ 
which appear as entries in the Wronskians  $\theta_{n}$, and how to go about constructing them. After the trivial case $\theta_0=1$, the first case to consider is  
$\theta_1$, which is  a Wronskian of just one function, namely $\psi_1=\phi_0$, an ordinary eigenfunction (with 
eigenvalue zero) for the initial 
Schr\"odinger operator with potential 
%which is a single transform from the original 
$V_0$.   
So we have simply
\begin{equation}
    \theta_1=\psi_1=3^\frac{1}{4}\left(\frac{z}{2}\right)^{\frac{1}{6}}e^{-\frac{3}{2}\left(\frac{z}{2}\right)^{\frac{2}{3}}} = 3^\frac{1}{4}\ze^\frac{1}{2} e^{-\frac{3}{2}\ze^2}, 
\end{equation}
and a solution of the Schr\"odinger equation 
$$ 
\left(\frac{\dd^2}{\dd z^2}+V_0\right) \, \phi_0 =0, \qquad \phi_0 = 3^\frac{1}{4}\ze^\frac{1}{2} e^{-\frac{3}{2}\ze^2}, $$
with $V_0$ given by (\ref{V0}). 
Another independent solution of the same 
Schr\"odinger equation is provided by 
$$ 
\tilde{\phi}_0=(\phi_{-1})^{-1} = 
 3^\frac{1}{4}\ze^\frac{1}{2} e^{\frac{3}{2}\ze^2}, 
 \qquad Wr\big(\phi_{0},(\phi_{-1})^{-1} \big) = \sqrt{3}. 
$$
The new potential obtained by applying a Darboux transformation with the eigenfunction $\phi_0$ is 
$$ 
V_1 = 
V_0 + 2 \frac{\dd^2}{\dd z^2}(\log\phi_0) =
\frac{-7+24\ze^2-36\ze^4}{144\ze^6},   
$$ 
which is the same as the result of applying the BT $T_+$ to find $P_1$ and then using the formula (\ref{VP}). 
By construction, $(\phi_0)^{-1}$ is an eigenfunction of the Schr\"odinger operator with the latter potential, 
but we wish to construct another eigenfunction $\phi_1$, 
such that 
\beq\label{phiwr1}
Wr\big(\phi_{1},(\phi_{0})^{-1} \big) = \sqrt{3}, 
\eeq 
and write it as 
$$ 
\phi_1 = \frac{\theta_2}{\theta_1} 
$$
with $\theta_1=\psi_1$ as before and  $\theta_2$ being a 
Wronskian, that is 
$$ 
\theta_2=\begin{vmatrix}
\psi_1 & \psi_2 \\
\psi_1' & \psi_2'
\end{vmatrix},  
$$
where $\psi_2 $ is to be determined. One way to go about this is to apply the BT $T_+$ once more to obtain the next potential $V_2$ (see Table \ref{FirstExamples}), 
then integrate 
$$ 
V_{2} = V_0 + 2(\log\theta_{2})'' $$ 
twice to find $\theta_2$, and finally use the Wronskian 
formula for $\theta_2$ to extract $\psi_2$ from another integral:  
\begin{equation}\label{psi2sc}
    \frac{\dd}{\dd z}\left( \frac{\psi_2}{\psi_1}\right) = \frac{\theta_2}{\psi_1^2} 
    \implies  \psi_2 
    %=
    \propto 
    \left(\frac{z}{2}\right)^{\frac{1}{6}}e^{-\frac{3}{2}\left(\frac{z}{2}\right)^{\frac{2}{3}}}\left( \frac{9}{2}\left(\frac{z}{2}\right)^{\frac{4}{3}}+3\left(\frac{z}{2}\right)^{\frac{2}{3}}\right)
\end{equation}
(where we have ignored an overall choice of scale, 
and the addition of an arbitrary constant multiple of $\psi_1$). 

However, a much more direct approach is to use the Jordan chain associated with the sequence of confluent 
Darboux transformations, upon which  we will base the proof in the next section.
Indeed, the condition (\ref{phiwr1}) is equivalent to the 
Burchnall-Chaundy relation 
$$ 
-\theta_2' = \sqrt{3}\, \theta_1^2
$$
(noting that $\theta_0=1$), and substituting $\theta_1=\psi_1$ as before and $\theta_2$  as a 
$2\times 2 $ Wronskian produces 
$$ 
-\begin{vmatrix}
\psi_1 & \psi_2 \\
\psi_1' & \psi_2'
\end{vmatrix} ' = -\begin{vmatrix}
\psi_1 & \psi_2 \\
\psi_1'' & \psi_2''
\end{vmatrix} =\sqrt{3}\psi_1^2
\implies \psi_1\psi_2''-\psi_2\psi_1''= -\sqrt{3}\psi_1^2, 
$$
so that from $\psi_1''+V_0\psi_1=0 $ we obtain the next step of the Jordan chain, namely the generalized 
eigenfunction equation for $\psi_2$, that is 
$$ 
\psi_2''+V_0\psi_2= -\sqrt{3}\psi_1. 
$$
Then the latter inhomogeneous equation can be solved 
by variation of parameters, using two independent solutions of the homogeneous problem, namely
$\psi_1=\phi_0$ and $\tilde{\psi_1}=(\phi_{-1})^{-1}$, 
leading to 
$$ 
\psi_2 = \psi_1 \left(\int   \tilde{\psi_1}\psi_1\,\dd z\right) -\tilde{\psi_1}\,  \left(\int  \psi_1^2 \,\dd z\right) 
= 3^{-\frac{1}{4}}\zeta^{\frac{1}{2}} 
e^{-\frac{3}{2}\ze^2} \left(\tfrac{9}{2}\ze^4 + 3\ze^2+1+c_1\right). 
$$
Note that the integration constant $c_1$ in the first integral above can be ignored, because adding constant multiples  of $\psi_1$ 
to $\psi_2$ makes no difference to $Wr(\psi_1,\psi_2)$, 
but we can set $c_1=-1$ so that it agrees with (\ref{psi2sc}); 
meanwhile, another integration constant in the second integral has been suppressed, because it corresponds to adding a 
multiple of $\tilde{\psi_1}$ with the wrong type of 
exponential factor $e^{\frac{3}{2}\ze^2}$: we shall return to this point in due course.  
Some other examples of algebraic solutions, potentials, eigenfunctions and Wronskians $\theta_n $ for small $n$ are listed in Table  \ref{FirstExamples}.

%The first few examples of various terms appearing so far are given from these relations in Table \ref{FirstExamples}.

\begin{table}
    
    \hspace*{-1.625cm}
    \renewcommand{\arraystretch}{3}%
    \begin{tabular}{ |c|c|c|c|c| } 
    
\hline
$n = \be/2$ & $0$ & $1$ & $2$ & $3$ \\
\hline
$P_{n}$ & $\zeta$ & $\frac{3\zeta^2+1}{3\zeta}$ & $ \frac{\zeta\left(9\zeta^4+12\zeta^2+5\right)}{\left( 3\zeta^2+1\right)^2}$ & $\frac{(3\zeta^2+1)(81\zeta^8+270\zeta^6+360\zeta^4+210\zeta^2+35)}{3\zeta(9\zeta^4+12\zeta^2+5)^2}$\\ 
$V_{n}$ &$-\frac{36\zeta^4-5}{144\zeta^6}$& $-\frac{36\zeta^4-24\zeta^2+7}{144\zeta^6}$ &  $-\frac{324\zeta^8-216\zeta^6+135\zeta^4-30\zeta^2-5}{144\zeta^6(3\zeta^2+1)^2}$ & $-\frac{2916\zeta^{12}+1944\zeta^{10}+1215\zeta^8+1080\zeta^6-630\zeta^4+175}{144\zeta^6(9\zeta^4+12\zeta^2+5)^2}$ \\ 
$\sigma_{n}$ & $\zeta^{-\frac{5}{24}}e^{-\frac{9}{8}\zeta^4}$ & $3^{1/4}\zeta^{\frac{7}{24}}e^{-\frac{9}{8}\zeta^4-\frac{3}{2}\zeta^2}$ & $(3\zeta^2+1)\zeta^{-\frac{5}{24}}e^{-\frac{9}{8}\zeta^4-3\zeta^2}$ & $3^{1/4}(9\zeta^4+12\zeta^2+5)\zeta^{\frac{7}{24}}e^{-\frac{9}{8}\zeta^4-\frac{9}{2}\zeta^2}$\\
$\theta_{n}$ & $1$ &  $3^{1/4}\zeta^{\frac{1}{2}}e^{-\frac{3}{2}\zeta^2}$ & $(3\zeta^2+1)e^{-3\zeta^2}$ & $3^{1/4}(9\zeta^4+12\zeta^2+5)\zeta^{\frac{1}{2}}e^{-\frac{9}{2}\zeta^2}$ \\
$\phi_{n}$ & $3^{1/4}\zeta^{\frac{1}{2}}e^{-\frac{3}{2}\zeta^2}$ & $3^{-1/4}(3\zeta^2+1)\zeta^{-\frac{1}{2}}e^{-\frac{3}{2}\zeta^2}$ & $3^{1/4}\frac{(9\zeta^4+12\zeta^2+5)}{(3\zeta^2+1)}\zeta^{\frac{1}{2}}e^{-\frac{3}{2}\zeta^2}$ & $3^{-1/4}\frac{(81\zeta^8+270\zeta^6+360\zeta^4+210\zeta^2+35)}{(9\zeta^4+12\zeta^2+5)}\zeta^{-\frac{1}{2}}e^{-\frac{3}{2}\zeta^2}$\\
$\rho_n$ & 1 & 1 & $s+1$ & $s^2+4s+5$ \\ 
\hline
\end{tabular}
    \caption{Algebraic solutions of \eqref{P3D7}, and associated potentials, tau functions, Wronskians,   eigenfunctions %of \eqref{painleveschrodinger} 
    in terms of $\zeta = \left(\frac{z}{2}\right)^{\frac{1}{3}}$, 
    with Ohyama polynomials in $s=3\ze^2$ for $n = 0,1,2,3$. }
    \label{FirstExamples}
\end{table}

\section{Proof of main theorem} 
%{Determinant forms for $\rho_n$ Ohyama Polynomials} 
\label{OhyamaSection}
%%\section{Jordan chain for special solutions of Painlev\'e III ($D_7$)} %  {$V_0$}
%%\label{JordanSection} 

In this section, we present the proof of Theorem \ref{OhyamaWronskian}, 
based on the properties 
of confluent Darboux transformations and the 
associated 
generalized eigenfunctions, forming a Jordan chain,   
in the context of the %as applied to a 
sequence of 
potentials $V_n$ associated with the 
algebraic solutions of 
the Painlev\'e III  ($D_7$) equation.  
Since we are considering the effect of the BTs (\ref{Backlund}) applied successively to the 
algebraic seed solution $P_0=\ze$ for $\be =0$, and the corresponding sequence of solutions 
$P_n$ with parameter $\be=2n$ for $n\in\Z$, it will be convenient to use $\psi_n$ 
to denote each member of an 
associated set of generalized eigenfunctions, labelled by the same integer $n$.   
Thus, after $|n|$ applications of a confluent Darboux tranformation applied to the 
Schr\"odinger operator with potential $V_0$ given by (\ref{V0}), we find two different sequences  
of generalized eigenfunctions 
namely $\psi_{\delta},\dots,\psi_{n}$, where $\delta=\pm 1=\sgn(n)$   
(so one for positive values of $n$, the other for negative $n$). 
%for the Painlev\'e potential $V_0$, with $\delta=\sgn(n),$ can be found  
This allows us to write 
Wronskian formulae for the eigenfunctions (with eigenvalue zero) 
of each of the Schr\"odinger operators with potential $V_n$, which  are valid for 
all $n\in\Z$. So we have 
$$ \phi_n =  \frac{\theta_{n+1}}{\theta_{n}}, \qquad \text{with} \quad 
\theta_{n}=Wr(\psi_\delta,\dots,\psi_{n}), 
$$
according to the choice of sign $\delta$, 
where 
$$ 
\left(\frac{\dd^2}{\dd z^2}+V_n\right)\, \phi_n = 0 
$$ 
holds for each integer $n$. 

The main idea of the proof is to use the Frobenius method to obtain an explicit recursion for the generalized eigenfunctions $\psi_n$, which (up to an exponential factor and a prefactor of $\sqrt{\ze}$) 
are given by polynomials in $\ze^2$. However, before we proceed with this it is convenient to state a 
slightly technical result 
on the coefficients that will appear in the Frobenius expansion, which reflects the fact that the whole construction has a certain symmetry property under exchanging $n\leftrightarrow -n$. (This can also be seen from the property (\ref{rhosignsymmetry}) 
of the Ohyama polynomials.) 
\begin{lemma}  \label{ACoefficientSymmetry}
Let coefficients $A_{m,k}$, labelled by integers $m,k$ with $k\geq 0$, be generated  recursively by the relations \beq\label{deltaArelation}
\begin{array}{rcl}
{A}_{m,k}&=& %\frac{3{a}}{k}
(3a/k) 
{A}_{m-\delta,k-2}+\frac{k+1}{3}\delta {A}_{m,k+1} \quad \text{for}\quad m\neq 0,\,\, k\geq 2, \\
{A}_{m,1}&=&\frac{2}{3}\delta{A}_{m,2}
\end{array} 
\eeq 
 with boundary conditions  
\beq\label{Abcs} 
{A}_{m,0}= 0 \quad \text{for}\quad m\neq 0, 
\qquad 
{A}_{m,2|m|}=\left(\frac{3a}{2} \right)^{|m|} \frac{c}{|m|!}, \quad \text{for all}\quad m, 
\eeq 
where $\delta=\sgn(n)$ and $a,c\in\C$ are arbitrary non-zero 
parameters.  
Then the coefficients satisfy 
\begin{equation} 
    A_{m,k}=(-1)^{k}A_{-m,k}  \label{Acoefficientsymmetry}
\end{equation}
for all $m\in\Z$.
\end{lemma}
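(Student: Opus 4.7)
The plan is to show the symmetry by a uniqueness argument: define $B_{m,k}\defeq(-1)^k A_{-m,k}$ and verify that the array $\{B_{m,k}\}$ satisfies exactly the same recursion and boundary data as $\{A_{m,k}\}$; the conclusion then follows once uniqueness is established.

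First I would observe that the recursion plus boundary data in \eqref{deltaArelation}--\eqref{Abcs} does uniquely determine $A_{m,k}$ for each integer $m$ and each $k$ with $0\leq k\leq 2|m|$. For fixed $m\neq 0$ one rearranges the main recursion so that $A_{m,k+1}$ is expressed in terms of $A_{m,k}$ and the already-known $A_{m-\delta,k-2}$, and walks this down from the top value $A_{m,2|m|}$ (given by \eqref{Abcs}) while inducting on $|m|$; the $k=1$ relation and the boundary $A_{m,0}=0$ fix the remaining entries. The case $m=0$ is trivial since $A_{0,k}=c\,\delta_{k,0}$ by fiat. This uniqueness step is the main technical point on which the whole argument hinges.

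Next I would verify the boundary data for $B$. For $m\neq 0$ one has $B_{m,0}=A_{-m,0}=0$ by \eqref{Abcs}, and for every $m$
\[
B_{m,2|m|}=(-1)^{2|m|}A_{-m,2|m|}=A_{-m,2|{-m}|}=(3a/2)^{|m|}c/|m|!,
\]
because $|-m|=|m|$ and $2|m|$ is even, so $B$ satisfies the same boundary data as $A$.

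Then I would verify that $B$ satisfies the same recursion as $A$. The key observation is that $\sgn(-m)=-\sgn(m)=-\delta$, so the recursion of the lemma, applied with index $-m$ in place of $m$, reads
\[
A_{-m,k}=(3a/k)\,A_{-(m-\delta),k-2}-\tfrac{k+1}{3}\delta\,A_{-m,k+1}.
\]
Multiplying through by $(-1)^k$ and using $(-1)^k=(-1)^{k-2}$ together with $(-1)^k=-(-1)^{k+1}$ produces
\[
B_{m,k}=(3a/k)\,B_{m-\delta,k-2}+\tfrac{k+1}{3}\delta\,B_{m,k+1},
\]
which is precisely the recursion for $A_{m,k}$. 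The same sign-flip calculation applied to $A_{-m,1}=\tfrac{2}{3}(-\delta)A_{-m,2}$ gives $B_{m,1}=\tfrac{2}{3}\delta\,B_{m,2}$, matching the $k=1$ relation for $A_{m,k}$.

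Combining Step 2 and Step 3 with the uniqueness of Step 1, we conclude $B_{m,k}=A_{m,k}$ for all $m\in\Z$ and $k\geq 0$, which is the claim \eqref{Acoefficientsymmetry}. The only non-routine aspect is the uniqueness argument in Step 1; the remainder is a careful accounting of signs, and the proof can be presented as a single short induction on $|m|$ (with $m=0$ as base case) if a more self-contained presentation is preferred.
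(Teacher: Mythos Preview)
Your proposal is correct and follows essentially the same strategy as the paper: establish that the recursion plus boundary data determine the coefficients uniquely (via induction on $|m|$), and then observe that the system is invariant under the substitution $A_{m,k}\mapsto(-1)^kA_{-m,k}$ with $\delta\mapsto-\delta$. The paper phrases this invariance in one line rather than introducing an auxiliary array $B_{m,k}$, and it spells out slightly more carefully the uniqueness step by first showing $A_{m,2|m|+1}=0$ (hence $A_{m,k}=0$ for all $k>2|m|$) before stepping down; you should make this termination explicit in your Step~1, since your ``walk down from the top value'' tacitly relies on knowing that the coefficients vanish above $k=2|m|$.
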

\begin{proof} The result follows by a straightforward induction on $|m|$. First of all, the result is trivially true for $m=0$, with $A_{0,0}=c$. Now we wish to show by 
induction that, for each $m\neq 0$, the recursion relations (\ref{deltaArelation}) and boundary values \eqref{Abcs} completely determine the coefficients $A_{m,k}$ for all $k\geq 0$, and moreover that 
$A_{m,k}=0$ when $k>2|m|$. When $m>0$, so $\delta=1$, it is convenient to start by taking $k=2m$ in  
(\ref{deltaArelation}), in which case the boundary values
$A_{m,2m}$ and $A_{m-1,2m-2}$ give 
$$ 
\left(\frac{3a}{2} \right)^{m} \frac{c}{m!} 
=\left(\frac{3a}{2m}\right)\, \left(\frac{3a}{2} \right)^{m-1} \frac{c}{(m-1)!} 
+\frac{2m+1}{3}\, A_{m,2m+1}
 $$
 which  
implies that $A_{m,2m+1}=0$, and thus, by increasing the value of $k$ and using the inductive hypothesis, we see 
that $A_{m,k}=0$ for all $k\geq 2m+1$, as required. 
Hence, by decreasing $k$ at each step, we can use the recursion (\ref{deltaArelation}) to determine the other (non-zero) coefficients $A_{m,k}$ 
for $1\leq k\leq 2m-1$, while $A_{m,0}=0$ is fixed by \eqref{Abcs}. 
The coefficients $A_{m,k}$ with $m<0$ are determined  similarly by the recursion with $\delta =-1$.  
Then the statement is trivially true for 
$k>2|m|$ and $k=0$, and for other values of $k$ it follows 
from the fact that \eqref{deltaArelation} and the 
boundary conditions are left invariant under replacing 
$A_{m,k}\to(-1)^{k}A_{-m,k}$, $\delta\to-\delta$. 
\end{proof} 
We are now ready to present generalized eigenfunctions $\psi_n$ for the  potential $V_0$ given by \eqref{V0}, associated with the algebraic seed solution of the Painlev\'e III ($D_7$) equation, which is given in terms of $\ze$ by $P_0=\zeta$. 
\begin{proposition} \label{psiproposition}
    For $n\in\mathbb{Z}\setminus\{ 0\}$, %$n\neq0$, 
    $\psi_{n}$ defined by
\begin{equation}\label{psinsol}
    \psi_{n} = \zeta^{\frac{1}{2}}e^{-\frac{3}{2}\delta\zeta^2}\sum^{2|n|-2}_{k=0}A_{|n|-1,k}(\delta\zeta^{2})^{k}
\end{equation}
where $\delta=\sgn(n)=\pm 1$, $a,c\in\mathbb{C}$ are arbitrary non-zero constants, 
and the coefficients $A_{m,k}$ are subject to the relations 
\eqref{deltaArelation} and \eqref{Abcs}, 
are generalized eigenfunctions for the %Painlev\'e 
potential $V_0$, 
satisfying 
\begin{align} \label{JordanChainpsi}
\begin{split}
    %{\psi}_{\delta}''+V_0{\psi}_{\delta}&=0,\\
    {\psi}_{n}''+V_0{\psi}_{n}&=-\delta a{\psi}_{n-\delta},
\end{split}
\end{align}
where $'$ denotes the derivative with respect to  $z$, and $\psi_0=0$. 
\end{proposition}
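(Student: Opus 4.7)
The plan is to verify the Jordan chain equation \eqref{JordanChainpsi} by direct substitution of the ansatz \eqref{psinsol}, reading off the recursion \eqref{deltaArelation} and the boundary data \eqref{Abcs} as the precise conditions for equality. The argument divides into a base case $|n|=1$ and an inductive step $|n|\geq 2$, with the $\delta=-1$ branch deducible from $\delta=+1$ via the symmetry from Lemma \ref{ACoefficientSymmetry}.

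For the base case, the sum in \eqref{psinsol} reduces to the constant $A_{0,0}=c$, and the right-hand side of \eqref{JordanChainpsi} vanishes because $\psi_0=0$. Thus it suffices to check that $F_\delta := \zeta^{1/2}e^{-\sfrac{3}{2}\delta\zeta^2}$ lies in the kernel of $L_0 := \dd^2/\dd z^2 + V_0$ for both signs $\delta=\pm 1$. Rewriting $L_0$ in the variable $\zeta$ via $\dd/\dd z = (6\zeta^2)^{-1}\dd/\dd \zeta$ gives
\begin{equation*}
L_0 = \frac{1}{36\zeta^4}\frac{\dd^2}{\dd\zeta^2} - \frac{1}{18\zeta^5}\frac{\dd}{\dd\zeta} + \frac{5-36\zeta^4}{144\zeta^6},
\end{equation*}
and a short computation confirms $L_0 F_\delta = 0$; this is consistent with $F_{+1}$ and $F_{-1}$ being proportional to the two independent eigenfunctions $\phi_0$ and $(\phi_{-1})^{-1}$ identified in the text preceding the statement.

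For the inductive step, fix $|n|\geq 2$ and decompose $\psi_n = F_\delta G_n$ with $G_n = \sum_k A_{|n|-1,k}(\delta\zeta^2)^k$. Since $L_0 F_\delta = 0$, the Leibniz rule reduces $L_0\psi_n$ to the cross-derivative terms $2 F_\delta'\,G_n' + F_\delta\,G_n''$, where primes denote $z$-derivatives. Using the explicit formulae $F_\delta'/F_\delta = (12\zeta^3)^{-1} - \delta/(2\zeta)$ and $G_n' = (3\zeta^3)^{-1}\sum_k k\,A_{|n|-1,k}\delta^k\zeta^{2k}$, together with the analogous expression for $G_n''$, and collecting terms, one obtains
\begin{equation*}
L_0\psi_n = \frac{F_\delta}{9\zeta^6}\sum_k k(k-1)A_{|n|-1,k}\delta^k\zeta^{2k} - \frac{F_\delta\,\delta}{3\zeta^4}\sum_k k\,A_{|n|-1,k}\delta^k\zeta^{2k}.
\end{equation*}
Since for $|n|\geq 2$ we have $\sgn(n-\delta)=\delta$ and $|n-\delta|-1=|n|-2$, the right-hand side of \eqref{JordanChainpsi} reads $-\delta a F_\delta \sum_k A_{|n|-2,k}\delta^k\zeta^{2k}$. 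Matching the coefficients of $\zeta^{2p}$ on both sides then produces: a trivial identity at $p=-2$; the low-boundary relation $A_{|n|-1,1}=\tfrac{2}{3}A_{|n|-1,2}$ at $p=-1$, which is the $\delta=+1$ specialisation of the second line of \eqref{deltaArelation}; and, for each $p\geq 0$, after dividing by $(p+2)\delta^{p+1}$ and setting $k=p+3$, the relation $A_{|n|-1,k} = \frac{3}{k}A_{|n|-1,k-1} - \frac{9a}{k(k-1)}A_{|n|-2,k-3}$, which (after the rearrangement $k\mapsto k+1$) is exactly the first line of \eqref{deltaArelation} with $\delta=+1$.

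The top-boundary condition in \eqref{Abcs} is then consistent with the truncation $A_{m,k}=0$ for $k>2|m|$ already established in Lemma \ref{ACoefficientSymmetry}, so the sum in \eqref{psinsol} is automatically finite, and the $\delta=-1$ case is obtained by applying the symmetry \eqref{Acoefficientsymmetry}, under which the ansatz and the Jordan chain equation are both invariant (up to the substitutions $n\mapsto -n$ and $\delta\mapsto -\delta$). The main technical obstacle is the careful bookkeeping of signs and inverse powers of $\zeta$ arising from the conversion $\dd/\dd z = (6\zeta^2)^{-1}\dd/\dd\zeta$ and the two distinct cross-term contributions; once this is done, the identification with \eqref{deltaArelation}--\eqref{Abcs} is immediate.
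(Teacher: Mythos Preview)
Your proof is correct and follows essentially the same strategy as the paper: substitute the ansatz into the Jordan chain equation, pass to the variable $\zeta$, and read off the coefficient recursion \eqref{deltaArelation}--\eqref{Abcs}. The presentational differences are minor. You exploit the factorization $\psi_n=F_\delta G_n$ together with $L_0F_\delta=0$ to reduce everything to the Leibniz cross-terms, whereas the paper first rewrites the Jordan chain as an explicit ODE for $w_{n-1}=G_n$ (their \eqref{wrecursion}) and then runs a standard Frobenius argument on an a priori infinite series, showing afterwards that it truncates. You also keep $\delta$ symbolic throughout and use $\delta^2=1$ so that the derived recursion is sign-independent; the paper instead treats $n>0$ and $n<0$ by two parallel substitutions \eqref{psiw} and \eqref{barpsiw}. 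Your route is slightly more economical, but the underlying computation is the same.
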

\begin{proof}
Firstly considering the case %positive 
$n>0$, changing variables from $z$ to $\zeta$ in 
\eqref{JordanChainpsi}, writing 
$$\psi_n (z)=\hat{\psi}_n(\ze)
$$ 
and substituting in the formula \eqref{V0} for $V_0$ in terms of $\zeta$ 
%, as given earlier, 
gives
\begin{equation} \label{SubzetaV0}
    4\zeta^2\hat{\psi}_{n}''-8\zeta\hat{\psi}_{n}'-36\zeta^4\hat{\psi}_{n}+5\hat{\psi}_{n}=-144a\zeta^6\hat{\psi}_{n-1} , 
\end{equation}
where the primes on $\hat{\psi}_n$ denote derivatives with respect to the argument $\ze$.
Define $w_n = w_n(\ze)$ for $n\geq 0$ by 
\begin{equation} \label{psiw}
    \hat{\psi}_{n}(\ze) =\zeta^{\frac{1}{2}}e^{-\frac{3}{2}\zeta^2}w_{n-1}(\zeta), 
\end{equation}
%for some function $w_{n}$ of $\zeta$. 
where shifting down the index to $w_{n-1}$ makes certain powers of $\zeta$ easier to keep track of later, 
and we set $w_{-1}=0$ to be consistent with the convention that 
$\psi_0=0$. 
This simplifies the modified form of the Jordan chain to be 
\beq
    \zeta^2\frac{\rd^2 w_{n}}{\rd\ze^2}-\left(6\zeta^3+\zeta \right)\frac{\rd w_{n}}{\rd\ze}+36a\zeta^6w_{n-1}=0, \label{wrecursion}
    %\\
    %\zeta^2w_{0}''&-\left(6\zeta^3+\zeta \right)w_{0}'=0.
\eeq
%where again the primes denote $\ze$ derivatives. 
Taking the initial constant solution $w_0=c\neq 0$ when $n=0$ in \eqref{wrecursion}, we may follow Frobenius' method by taking a series solution   
\begin{equation}
    w_{n}=\sum^{\infty}_{k=0}A_{n,k}\zeta^k,
\end{equation}
which, when  substituted into the preceding relation between $w_n$ and $w_{n-1}$, 
produces 
\begin{equation} \label{Frobenius}
    \sum^{\infty}_{k=0}k(k-2)A_{n,k}\zeta^{k-2} - 6\sum^{\infty}_{k=0}kA_{n,k}\zeta^k + 36a\sum^{\infty}_{k=0}A_{n-1,k}\zeta^{k+4} = 0.
\end{equation}
Shifting $k\rightarrow k-2$ and $k\rightarrow k-6$, respectively, and collecting terms with the same power of $\ze$ in \eqref{Frobenius}, 
we begin by 
explicitly calculating the coefficients for the 
first 6 values of $k$ in the sum. 
As each coefficient must vanish, we  see that  $A_{n,0}$ and $A_{n,2}$ can be freely chosen, while $A_{n,1}$, $A_{n,3}$ and $A_{n,5}$ must all be zero, and for $k=5$ we have that $A_{n,4} = \frac{3}{2}A_{n,2}$. 
The remaining infinite sum gives the relations defining $A_{n,k}$ 
for $k\geq 6$, which are  %by requiring each term goes to zero 
obtained as 
$$
    A_{n,k}=\frac{6}{k}A_{n,k-2}-\frac{36a}{k(k-2)}A_{n-1,k-6}.
$$
From this and $A_{n,1}=A_{n,3}=A_{n,5}=0$ we can see that for all odd $k$, $A_{n,k}=0$. We also choose to take  $A_{n,0}=0$ for $n \neq 0$, since including a  non-zero $A_{n,0}$ is equivalent to adding a multiple of the homogeneous solution $\psi_1$. (In the context of the 
 Wronskian form of the solution with entries 
$\psi_{n}$, this amounts to adding one column onto another, having no effect on the determinant.) \par
From the examples of  $\psi_1$ and $\psi_2$ calculated previously, we would expect there to be some choice of $A_{n,2}$ which truncates this infinite sum, which follows from \eqref{wrecursion} if we assume that the $w_{n}$ are polynomials in $\ze$. If  $w_{n}$ and $w_{n-1}$ are polynomials of degree $m$ and $p$ respectively, then in %by observing 
the relation between them, the highest powers appearing are $m+2$ and $p+6$; hence in order for them to agree, we require $m=p+4$. 
Thus if the $w_{n}$  are a sequence of polynomials, each of  
degree $4$ greater than the previous one, then the fact that $w_0=c$ has degree $0$ implies that $w_{n}$ must be of degree $4n$, and so we must have $A_{n,k}=0$ for $k>4n$. All greater odd $k$ terms are zero automatically, so the first that must be set to zero is 
$$
    0=A_{n,4n+2}= \frac{6}{4n+2}A_{n,4n} - \frac{36a}{4n(4n+2)}A_{n-1,4n-4} \implies A_{n,4n}=\frac{3a}{2n}A_{n-1,4n-4}.
$$ 
By induction, setting $A_{n,4n+2}=0$ also sets  $A_{n,k}=0$ for 
all higher $k$  as well, as they only rely linearly on  it and on values of $A_{n-1,k}$  beyond the point at which all these coefficients are fixed to zero. Since $A_{0,0}=c=w_0$,  iterating the above recursion for $A_{n,4n}$ in terms of $A_{n-1,4n-4}$ 
gives 
    $$
    A_{n,4n}=\left( \frac{3a}{2} \right)^n \frac{c}{n!}.
    $$
\par 
The relation for $A_{n,k}$ can also be rearranged to step down in $k$ instead of up, in the form 
$$
    A_{n,k}=\frac{k+2}{6}A_{n,k+2}+\frac{6a}{k}A_{n-1,k-4},
$$
and so, for a given $n$, starting from the above formula for $A_{n,4n}$, this relation used to find all terms $A_{n,k}$ with lower $k$ all the way down to $k=2$, with the inductive assumption that the 
previous terms  $A_{n-1,k}$ have been found already. This produces a sequence of polynomials $w_n=\sum^{n}_{k=0}A_{n,k}\zeta^{2k}$ for $n\geq 0$ 
(with  no odd powers of $\zeta$ in the sum), whose coefficients are 
determined by 
\eqref{deltaArelation} and \eqref{Abcs}, and as a consequence of 
\eqref{Acoefficientsymmetry} we also have 
$$ 
A_{0,k}= 0 \qquad \text{for}\quad k\geq1.
$$ 
This completes the proof of the result on the 
generalized eigenfunctions $\psi_{n}$ for $n>0$, 
corresponding to the statement with $\delta=1$. 
%\par

For the case of negative $n$, we instead let
\begin{equation} \label{barpsiw}
    \psi_n(z)=\hat{\psi}_{n}(\ze)=\zeta^{\frac{1}{2}}e^{\frac{3}{2}\zeta^2}{u}_{n+1}(\zeta), 
\end{equation}
and then substituting into the Jordan chain 
equation \eqref{SubzetaV0} with the replacement $a\to -a$ (corresponding to $\delta=-1$)  
results in a relation for ${u}_{n}$ which is subtly different from 
\eqref{wrecursion}, namely 
$$ 
%\begin{align}
    \zeta^2\frac{\rd^2 u_{n}}{\rd\ze^2}+\left(6\zeta^3-\zeta \right)\frac{\rd u_{n}}{\rd\ze}-36a\zeta^6{u}_{n+1}=0, %\\
    %\zeta^2{u}_{0}''&+\left(6\zeta^3-\zeta \right){u}_{0}'=0.
%\end{align}
$$
where we set $u_{1}=0$ to ensure the validity of the above when $n=0$. 
Mutatis mutandis, using the symmetry properties of the coefficients $A_{n,k}$ as in Lemma \ref{ACoefficientSymmetry}, the rest of the proof proceeds by induction in the same way as for the case of positive $n$. 
Hence we obtain the expression \eqref{psinsol} for the sequence of generalized eigenfunctions of the Schr\"odinger operator with 
potential $V_0$, valid for both choices of the sign $\delta=\sgn (n)=\pm 1$.
\end{proof}

The application of Frobenius' method in the above proof shows that the 
factors $w_n$ in \eqref{psiw} and $u_n$ in \eqref{barpsiw} are given by infinite series in general, as different choices of the coefficients $A_{n,0}$ and $A_{n,1}$ are allowed at each step, corresponding to the freedom to add arbitrary multiples of $\psi_k$ with $|k|<|n|$ to each 
generalized eigenfunction $\psi_n$. While such choices lead to 
valid Darboux transformations of the potential $V_0$, 
they do not result in the correct forms for the Wronskians 
$\theta_n$ to produce the Ohyama polynomials, 
as required by Theorem \ref{OhyamaWronskian}, whose proof 
will be presented shortly.

\begin{remark} It is worth commenting on the choice of scaling at this stage, as reflected in the parameters $a,c$. The constant $c$ 
determines the choice of scaling for $\psi_1$, the initial eigenfunction for the Schr\"odinger operator with potential $V_0$, while the parameter $a$ 
corresponds to the Wronskian between pairs of eigenfunctions generated by subsequent iterations of the confluent Darboux transformation (see the Appendix for more details). In order to be consistent with 
\cite{sato}, we usually fix $a=\sqrt{3}$ and $c=3^{\frac{1}{4}}$, e.g. 
in the statement of Theorem \ref{OhyamaWronskian}. 
However, other choices make the form of the coefficients 
 $A_{n,k}$ and the relations between them somewhat simpler. 
 For instance, the choice $c=1$ and $a=\frac{2}{3}$ can be taken, which gives 
 \begin{align}
 A_{n,2n}&=\frac{1}{n!},\nonumber \\ 
 A_{n,2n-1} &= \frac{2}{9}(2n+1)\frac{1}{(n-1)!},\nonumber \\
    A_{n,2n-2}&= \frac{2}{81}(4n^2+10n+9)\frac{1}{(n-2)!},\nonumber \\
    A_{n,2n-3}&= \frac{4}{2187}(2n-1)(4n^3+18n^2+23n-15)\frac{1}{(n-2)!},\nonumber \\
    A_{n,2n-4}&= \frac{2}{19683}(16n^5+128n^4+356n^3+292n^2+513n+1125)\frac{1}{(n-3)!},\nonumber
\end{align}
and so on, which appear to be the most concise explicit forms of these 
coefficients. 
It is also possible to produce an exact generating function $\Psi$ for 
the generalized eigenfunctions that appear as entries in the Wronskians: in the case 
$\delta=+1$, upon multiplying \eqref{JordanChainpsi}  by $\la^{n-1}$ and summing from $n=1$ to $\infty$, we find 
$$
\Psi''+V_0\Psi = -a\la\Psi, 
$$
which is a Schr\"odinger equation with a non-zero eigenvalue term. 
After fixing the scale so that $a=-1$, an explicit 
formula for $\Psi$ is given in the next section (see Proposition \ref{genprop}). 
\end{remark}

We now complete the proof of our main result 
on Ohyama polynomials.  

\begin{proof}[Proof of Theorem \ref{OhyamaWronskian}]
Recall from the results of Lemma \ref{dtlem}, Proposition \ref{recprop}, and  Lemma \ref{ohsiglem}, that the Ohyama polynomials 
$\rho_n$ are equivalent (up to some scale factors) to a sequence of 
tau functions $\si_n$ for the Painlev\'e III ($D_7$) equation. In turn, via the formula 
$$
V_n = 2 \frac{\rd^2}{\rd z^2} \log\si_n, \qquad n\in\Z, 
$$ 
these tau functions correspond to a sequence of Schr\"odinger potentials obtained from successive application of confluent 
Darboux transformations applied to the initial potential $V_0$, with 
 one Jordan chain for $n\geq  0$ and another for $n\leq 0$. 
General results on confluent Darboux transformations (which are described in more detail in the Appendix) 
imply that the tau functions and new eigenfunctions obtained 
via this process can be written in terms of quantities $\theta_n$, as in \eqref{thetasigma} and \eqref{phitheta}, where 
$$\theta_{n}= Wr(\psi_{\delta},\dots,\psi_{n})$$
are 
Wronskians of generalized eigenfunctions $\psi_j$. Moreover, the 
Burchnall-Chaundy relation \eqref{sigbc} fixes a choice of 
normalization for the new eigenfunction introduced at each stage. 

The problem is then to specify the explicit choices of the 
generalized eigenfunctions  %$\psi_n$ 
that appear as entries in the Wronskians $\theta_n$. The proof of Proposition \ref{psiproposition} 
shows that in general, these have the form 
$$\psi_{n}=\zeta^{\frac{1}{2}}e^{-\frac{3}{2}\delta\zeta^2}W_n(\zeta), $$
(with $W_n=w_{n-1}$ for $n>0$,  $W_n=u_{n+1}$ for $n<0$, respectively), 
where each $W_n$ is given by an infinite series in $\ze$, unless 
suitable coefficients are fixed to be zero, in which case it is the 
polynomial
\beq\label{Wpoly}
W_n = \sum^{2|n|-2}_{k=0}A_{|n|-1,k}(\delta\zeta^{2})^{k}. 
\eeq
Using the Wronskian identity 
$$Wr(g\,f_1,g\, f_2,\dots,g\, f_n)=g^n\, Wr(f_1,f_2,\dots,f_n), $$
we can extract all the prefactors $g=\zeta^{\frac{1}{2}}e^{-\frac{3}{2}\delta\zeta^2}$ to obtain the formula 
\beq\label{thetafin}
     \theta_{n}= Wr(\psi_{\delta},\dots,\psi_{n}) = \zeta^{\frac{|n|}{2}}e^{-\frac{3}{2}n\zeta^2}Wr(W_{\delta},\dots,W_n).
\eeq 
On the other hand,  using \eqref{thetasigma} and \eqref{sigmarho} to 
express $\theta_{n}$ in terms of $\rho_{n}$, we find 
\begin{equation} \label{thetarho}
       \theta_{n} = 
\begin{cases}
    3^{1/4}\zeta^\frac{1}{2}e^{-\frac{3}{2}n\zeta^2}\rho_{n}(3\zeta^2) & \text{for $n$ odd,}\\
    e^{-\frac{3}{2}n\zeta^2}\rho_{n}(3\zeta^2)& \text{for $n$ even.}
\end{cases} 
\end{equation}
By comparing \eqref{thetafin} with \eqref{thetarho} we obtain, 
\begin{equation}\label{rhozeta}
    \rho_{n}=
    \begin{cases}
    3^{-\frac{1}{4}}\zeta^{\frac{|n|-1}{2}}Wr(W_{\delta},\dots,W_n)& \text{for $n$ odd},\\
    \zeta^{\frac{|n|}{2}}Wr(W_{\delta},\dots,W_n)& \text{for $n$ even},
\end{cases}
\end{equation}
where the Wronskian entries are as in \eqref{Wpoly}. 
Converting (\ref{rhozeta}) to the variable $s=3\zeta^2$ gives the statement of the theorem.
\end{proof} 

\begin{remark}
The formula \eqref{rhozeta} does not make all the properties of 
the Ohyama polynomials obvious. In particular, the presence of the derivatives $\frac{d}{dz}=\frac{1}{6\zeta^2}\frac{d}{d\zeta}$ in the 
Wronskian means that it can be inferred immediately that 
\begin{equation} \label{rationalRho}
    \rho_{n}=\zeta^{L_n}D_{n}(\zeta),
\end{equation} 
for some $L_n\in\Z$, where $D_n$ is a polynomial in $\ze$ with 
non-zero constant term, and it is also easy to see that 
$\rho_n$ is an even function of $\ze$; but it is not clear why 
$L_n\geq 0$, so the best conclusion 
that can be made from \eqref{rationalRho} is that 
$\rho_n$ is a Laurent polynomial in $s=3\zeta^2$. Furthermore, 
it is not immediately apparent why $\rho_n(s)$ should have integer 
coefficients, which is another feature of the Ohyama polynomials, 
with the normalization chosen as in \cite{sato}.
\end{remark}

\section{Lax pair for Painlev\'e III ($D7$) and generating functions}
\label{laxgen}

In this section, we consider  a Lax pair for the 
Painlev\'e
III ($D_7$) equation, and present a particular solution to this linear system. This solution is then used to derive a generating function for the generalized eigenfunctions $\psi_i$ which appear as Wronskian entries in the tau functions for the algebraic solutions. 

A scalar Lax pair for the Painlev\'e III ($D_7$) equation 
(\ref{P3D7}) is given by 
\begin{equation} \label{PIIILax}
    \begin{cases}
        \Psi_{zz} +V\Psi = \lambda\Psi, \\
        \Psi_\lambda = \frac{1}{2}\left(\frac{z}{\lambda} - \frac{P}{\lambda^2} \right)\Psi_z - \frac{1}{4}\left(\frac{1}{\lambda} - \frac{P'} {\lambda^2}\right)\Psi, %(P_n)_z}
    \end{cases}
\end{equation} 
where $V$ is defined in terms of $P$ and $P'$ (its 
$z$ derivative) by (\ref{VP}). With $V$ specified in this way, the compatibility condition $\Psi_{zz\la}=\Psi_{\la zz}$ produces (\ref{P3D7}) directly. Alternatively, one can start with $V,P$ as unspecified functions, and 
then the compatibility condition yields the system 
$$ 
\begin{array}{rcl}
P'''+4VP'+2V'P & = & 0, \\ 
P'+\tfrac{z}{2}V' +V & = & 0, 
\end{array}
$$ 
from which both (\ref{VP}) and (\ref{P3D7}) can be derived, with the parameter $\be$ appearing as an 
integration constant. 
This Lax pair can be derived by 
applying the similarity reduction (\ref{hodo}), as 
found in \cite{hone1999associated} and  used in 
\cite{barnes2022similarity}, to the Lax pair for 
the Camassa-Holm equation, 
which is related via a reciprocal 
(hodograph-type) 
transformation to the Lax pair for the first negative KdV flow \cite{hone2002prolongation}, that is %given by  
\begin{equation} \label{reciprocalLax}
    \begin{cases}
        \Phi_{XX}+\Bar{V}\Phi = \xi\Phi, \\
        \Phi_T = A\Phi_X - \frac{1}{2}A_X\Phi,
    \end{cases}
\end{equation}
where  $\xi$ is the  spectral parameter and  
%\begin{equation}
$$ 
    \Bar{V} = - \frac{p_{XX}}{2p} + \frac{p_X^2}{4p^2} - \frac{1}{4p^2}, \qquad A=\frac{p}{2\xi} .
$$ 
%\end{equation}
%and with $A=\frac{P}{2\xi}$ for some 
%spectral parameter $\xi$. 
We can simultaneously perform a scaling similarity reduction on the coefficients and the wave function of this Lax pair, 
by setting 
$$   
 p(X,T)=T^{-\sfrac{1}{2}}P(z), \qquad z = XT^{\sfrac{1}{2}}, 
    \qquad
    \Phi(X,T;\xi) = \la^{\sfrac{1}{4}}{\Psi}(z,\lambda), \qquad\lambda = \xi T^{-1},
$$ 
which 
transforms the linear system  
 (\ref{reciprocalLax}) into  
the Lax pair \eqref{PIIILax} above.
%which takes the reciprocal Camassa-Holm equation to  \eqref{P3D7} is via
%\begin{equation}

Recently, Buckingham \& Miller \cite{buckingham2022algebraic} presented  a Riemann-Hilbert representation of the algebraic solutions of the Painlev\'e III ($D_7$) equation, which involved solving the linear system coming from an alternative Lax pair of Jimbo-Miwa type in terms of Airy functions. Since 
 \eqref{PIIILax} is connected to the Jimbo-Miwa Lax pair by a gauge transformation, we can follow their lead  %example 
 somewhat 
 and see that the wave function $\Psi$ can be solved in terms of Airy functions when we take the seed solution $P_0=(z/2)^{\sfrac{1}{3}}$. 
 \begin{proposition}
    The Lax pair for the algebraic seed solution of 
    Painlev\'e III ($D_7$) with $\be=0$,  given by the linear system \eqref{PIIILax},  has the general solution 
    \begin{equation}\label{Psigen}
    \Psi = \lambda^{-\frac{1}{6}}\zeta^{\frac{1}{2}}\left[c_1\Ai\left(\left(\frac{9}{\lambda^2}\right)^{\frac{1}{3}}\left(\frac{1}{4}+\lambda\zeta^2 \right)\right) + c_2\Bi\left(\left(\frac{9}{\lambda^2}\right)^{\frac{1}{3}}\left(\frac{1}{4}+\lambda\zeta^2 \right)\right) \right],
\end{equation}
for arbitrary constants $c_1, c_2$.
\end{proposition}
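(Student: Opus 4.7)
The plan is to exhibit explicit solutions in the form $\Psi = \lambda^{-1/6}\zeta^{1/2}f(\eta)$ for a suitable variable $\eta=\eta(\zeta,\lambda)$, and to verify that both equations of the Lax pair \eqref{PIIILax} reduce to the statement that $f$ satisfies the Airy equation $f''(\eta)=\eta f(\eta)$; since the latter has a two-dimensional solution space spanned by $\Ai$ and $\Bi$, this will produce the general solution.

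First I would substitute $P=P_0=\zeta$ and the associated potential $V=V_0$ from \eqref{V0} into the Schr\"odinger equation $\Psi_{zz}+V\Psi=\lambda\Psi$, change variables from $z$ to $\zeta$ via $\tfrac{\dd}{\dd z}=\tfrac{1}{6\zeta^2}\tfrac{\dd}{\dd \zeta}$, and set $\Psi=\zeta^{1/2}F(\zeta)$ to eliminate the first-order derivative term that would otherwise appear. The resulting ODE is
\[
F''(\zeta)-\zeta^{-1}F'(\zeta)=9\zeta^2(1+4\lambda\zeta^2)F(\zeta),
\]
where primes now denote derivatives with respect to $\zeta$. Next I would introduce $\eta=(9/\lambda^2)^{1/3}(\tfrac14+\lambda\zeta^2)$ and set $F=f(\eta)$; noting that $\eta_\zeta=2\nu\zeta$ with $\nu^3=9\lambda$, the combination $F''-\zeta^{-1}F'$ collapses into $4\nu^2\zeta^2 f''(\eta)$, while the right-hand side equals $4\nu^2\zeta^2\eta f(\eta)$ by the very definition of $\eta$. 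Hence the Schr\"odinger equation reduces to Airy's form, and at fixed $\lambda$ its general solution has the stated shape, with coefficients that a priori might still depend on $\lambda$.

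To finish I would impose the second, isomonodromic equation of \eqref{PIIILax} and show that it fixes the prefactor $\lambda^{-1/6}$ while leaving $c_1,c_2$ as genuine ($\lambda$-independent) constants. Writing $\Psi_z=(6\zeta^2)^{-1}\Psi_\zeta$ and substituting the ansatz, the right-hand side of the $\lambda$-equation splits into a piece proportional to $\Psi$ itself and a piece proportional to $c_1\Ai'(\eta)+c_2\Bi'(\eta)$. A direct calculation will show that the $\Psi$-coefficient on the right collapses to $-1/(6\lambda)$, matching exactly the contribution $-\tfrac16\lambda^{-7/6}\zeta^{1/2}f$ obtained by differentiating the prefactor on the left; meanwhile the coefficient of the Airy derivatives reproduces $\lambda^{-1/6}\zeta^{1/2}\eta_\lambda$ with $\eta_\lambda=9^{1/3}(2\lambda\zeta^2-1)/(6\lambda^{5/3})$. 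The hard part is this final verification: both sides carry several fractional powers of $\lambda$ and $\zeta$, and one must watch the pieces $\zeta^2/\lambda$ and $\zeta^{-2}/\lambda^2$ telescope against each other to isolate the clean identity $-1/(6\lambda)$, with no residual constraint on $c_1,c_2$.
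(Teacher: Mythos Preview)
Your proof is correct and follows essentially the same route as the paper: both reduce the Schr\"odinger equation to the Airy equation via the substitution $\Psi=\zeta^{1/2}\,(\text{function of }\tfrac14+\lambda\zeta^2)$, and then use the $\lambda$-part of the Lax pair to pin down the $\lambda$-prefactor. The only cosmetic difference is that the paper leaves the prefactor as an unknown $\tilde A(\lambda)$, substitutes into the second Lax equation to obtain the ODE $\tilde A'=-\tfrac{5}{12\lambda}\tilde A$, and integrates to $\lambda^{-5/12}$ (combining with an earlier $\lambda^{1/4}$ to give $\lambda^{-1/6}$), whereas you insert $\lambda^{-1/6}$ from the outset and verify it directly.
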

\begin{proof}
For the algebraic solution of Painlev\'e III ($D_7$),  we substitute $V_0$ from (\ref{V0}) and $P_0=(z/2)^{\sfrac{1}{3}}=\ze$ into the Lax pair. Then, 
by rewriting the first equation in (\ref{PIIILax}) in terms of $\zeta$, 
we obtain
$$
    4\zeta^2\Psi_{\zeta\zeta}-8\zeta\Psi_\zeta - (36\zeta^4-5)\Psi = 144\lambda\zeta^6\Psi,
$$
and making a further change of variables to $w=\frac{1}{4}+\lambda\zeta^2$, we  get
$$
    16\left(w-\frac{1}{4}\right)^2\Psi_{ww}-8\left(w-\frac{1}{4}\right)\Psi_w=
    \Big(144\big(w-\tfrac{1}{4}\big) 
    %\lambda\zeta^2
    +36\la^{-2}\big(w-\tfrac{1}{4}\big)^2  
    %\zeta^4
    -5\Big)\Psi.
$$ 
To clean up the right-hand side of the preceding equation, % to get an equation in $w$, let 
we set 
$$
   \Psi = \left(w-\frac{1}{4}\right)^{\frac{1}{4}}\vartheta(w)=\lambda^{\frac{1}{4}}\zeta^{\frac{1}{2}}\vartheta\left(\frac{1}{4}+\lambda\zeta^2\right),
$$
which gives a scaled Airy equation for $\vartheta(w)$, that is 
$$
    \vartheta_{ww} = \left(\frac{9}{\lambda^2}w \right) \vartheta, 
$$
with general solution
$$ %\begin{equation}
    \vartheta(w)= \tilde{A}\Ai\left(\left(\frac{9}{\lambda^2}\right)^{\frac{1}{3}}w\right) + \tilde{B}\Bi\left(\left(\frac{9}{\lambda^2}\right)^{\frac{1}{3}}w\right)
$$ %\end{equation}
where $\Ai(w), \Bi(w)$ are the standard Airy functions and $\tilde{A}, \tilde{B}$ are independent of $w$ but otherwise arbitrary. % terms that are constant in $w$. \par
Hence the solution of the first part of the Lax pair is given by
\begin{equation} \label{FirstLaxSolution}
    \Psi = \lambda^{\frac{1}{4}}\zeta^{\frac{1}{2}}\left[\tilde{A}(\lambda)\Ai\left(\left(\frac{9}{\lambda^2}\right)^{\frac{1}{3}}\left(\frac{1}{4}+\lambda\zeta^2 \right)\right) + \tilde{B}(\lambda)\Bi\left(\left(\frac{9}{\lambda^2}\right)^{\frac{1}{3}}\left(\frac{1}{4}+\lambda\zeta^2 \right)\right) \right],
\end{equation} 
where %we again have arbitrary terms 
$A(\lambda),B(\lambda)$ are arbitrary functions of $\la$ (that do not depend on $\zeta$).  
%but are thus free to be any function of $\lambda$. 
We can fix these coefficients using terms via the second part of the Lax pair to get a general solution.  
Substituting the solution (\ref{FirstLaxSolution}) 
into the second equation 
in (\ref{PIIILax}) gives an equation in terms of $\Ai, \Bi$ and their derivatives $\Ai',\Bi'$. The derivative terms %$\Ai'$ and $\Bi'$ parts 
cancel identically, hence place no restriction on the coefficients $A(\lambda), B(\lambda)$. The remaining part can be written concisely as %gives:
$$
        \lambda^{\frac{1}{4}}\zeta^{\frac{1}{2}}\left(A'(\lambda)\Ai\left(\left(\frac{9}{\lambda^2}\right)^{\frac{1}{3}}\left(\frac{1}{4}+\lambda\zeta^2 \right)\right)  + B'(\lambda)\Bi\left(\left(\frac{9}{\lambda^2}\right)^{\frac{1}{3}}\left(\frac{1}{4}+\lambda\zeta^2 \right)\right)\right) = -\frac{5}{12}\lambda^{-1}\Psi,
$$
which yields separate equations relating the coefficients in front of the independent functions $\Ai$ and $\Bi$ appearing on each side. %parts and treat these separately. 
For the coefficient of $\Ai$ we obtain
$$
    A'(\lambda)=-\frac{5}{12\lambda}A(\lambda) 
    \implies 
     A(\lambda) = c_1\lambda^{-\frac{5}{12}},
$$
and  likewise for the coefficient of $\Bi$ we get the same linear ODE, so that $B(\lambda) = c_2\lambda^{-\frac{5}{12}}$, for arbitrary constants $c_1, c_2$. Hence overall we have the solution 
(\ref{Psigen}),  in terms of the  standard Airy functions $\Ai,\Bi$, as required. 
\end{proof}
Although the independent variable in (\ref{P3D7}) is $z$, it is more convenient for the algebraic solutions to use 
$\zeta$ instead, so we have written $\Psi(\ze,\lambda)$ 
in terms of the latter variable in (\ref{Psigen}).  
Returning to  the consideration of the generalized eigenfunctions $\psi_i$ which are entries of the Wronskians $\theta_n$, we noted previously that a generating function $\Psi$ %(\zeta,\lambda)$ 
for these entries would be a solution of the 
Schr\"odinger equation  
\begin{equation}\label{gensch}
   \left( \frac{\dd^2}{\dd z^2} +V_0\right)\Psi = \lambda\Psi,
\end{equation}
with $V_0$ given by (\ref{V0}), 
which is exactly the $z$ part of the Lax pair 
for Painlev\'e III ($D_7$), so from the proof of the preceding proposition, we see that such a $\Psi$ must be of the form \eqref{FirstLaxSolution}, leading us to the 
following result.
\begin{proposition}\label{genprop}
For the generalized eigenfunctions 
$\psi_i$ with $i>0$ of the algebraic potential $V_0$ given by (\ref{V0}), 
the generating function 
\beq\label{posgenfn} 
\Psi = \sum^{\infty}_{n=1}\psi_{n}\lambda^{n-1}
\eeq 
is given by 
    %For the $n>0$ case, 
\begin{equation} \label{PositivePsiGen}
    \Psi = 3^{\frac{5}{12}}\lambda^{-\frac{1}{6}}e^{\frac{1}{4\lambda}}\sqrt{2\pi\zeta}\,\Ai\left(\left(\frac{9}{\lambda^2}\right)^{\frac{1}{3}}\left(\frac{1}{4}+\lambda\zeta^2 \right)\right),
\end{equation}
while the generating function 
$$
\tilde{\Psi} = \sum^{\infty}_{n=1}\psi_{-n}\lambda^{n-1}
$$ 
for the generalized eigenfunctions 
$\psi_i$ with $i<0$
is given by %For $n<0$,
\begin{equation} \label{NegativePsiGen}
    \tilde{\Psi} = 3^{\frac{5}{12}}\lambda^{-\frac{1}{6}}e^{-\frac{1}{4\lambda}}
    \sqrt{\frac{\pi\zeta}{2}}
    \, \Bi\left(\left(\frac{9}{\lambda^2}\right)^{\frac{1}{3}}\left(\frac{1}{4}+\lambda\zeta^2 \right)\right).
\end{equation}
\end{proposition}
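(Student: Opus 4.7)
The plan is to verify that each side of \eqref{PositivePsiGen} and \eqref{NegativePsiGen} solves the same $z$-part of the Lax pair \eqref{PIIILax} at the seed potential $V_0$, and then to pin down the coefficients by matching the leading behaviour as $\la \to 0^+$; the identifications must be interpreted as asymptotic expansions, since the formal generating series $\sum_{n \geq 1}\psi_n \la^{n-1}$ is generically divergent.

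First I would perform the formal summation sketched in the remark preceding the proposition. Multiplying the Jordan chain \eqref{JordanChainpsi} by $\la^{n-1}$, summing from $n = 1$, and using $\psi_0 = 0$ to reindex, I obtain $\Psi'' + V_0\Psi = -a\la\Psi$; fixing the normalization $a = -1$ gives $\Psi'' + V_0\Psi = \la\Psi$, which is exactly the Schr\"odinger equation whose general solution was identified in the preceding proposition as $\la^{1/4}\ze^{1/2}(A\Ai(w) + B\Bi(w))$ with $w = (9/\la^2)^{1/3}(\tfrac14 + \la\ze^2)$. The analogous manipulation places $\tilde\Psi$ in the same two-dimensional solution space.

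To identify the correct linear combinations, I would expand $\tfrac{2}{3} w^{3/2}$ in powers of $\la$ using $(1 + 4\la\ze^2)^{3/2} = 1 + 6\la\ze^2 + O(\la^2)$, producing $\tfrac{2}{3} w^{3/2} = \tfrac{1}{4\la} + \tfrac{3\ze^2}{2} + O(\la)$. Feeding this into the classical asymptotics $\Ai(w) \sim (2\sqrt{\pi})^{-1} w^{-1/4} e^{-2w^{3/2}/3}$ and $\Bi(w) \sim \pi^{-1/2} w^{-1/4} e^{2w^{3/2}/3}$, together with $w^{-1/4} = (\sqrt{2}/3^{1/6})\la^{1/6}(1 + O(\la))$, shows that the essential singularities $e^{\mp 1/(4\la)}$ cancel exactly against the proposed prefactors $e^{\pm 1/(4\la)}$. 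The leading order collapses to $3^{1/4}\ze^{1/2}e^{\mp 3\ze^2/2}$, which are precisely $\psi_1$ and $\psi_{-1}$; the decaying factor $e^{-3\ze^2/2}$ dictates the choice of $\Ai$ for the positive-index series and the growing factor $e^{+3\ze^2/2}$ selects $\Bi$ for the negative-index series, while comparing numerical coefficients pins down the overall prefactors $3^{5/12}\la^{-1/6}\sqrt{2\pi\ze}$ and $3^{5/12}\la^{-1/6}\sqrt{\pi\ze/2}$.

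The main obstacle is the rigorous identification of the full asymptotic expansion of the transcendental Airy expression with the formal series $\sum\psi_n\la^{n-1}$. Once the leading behaviour is matched, substituting an ansatz $\Phi = \sum_{n \geq 1} c_n \la^{n-1}$ into $\Phi'' + V_0\Phi = \la\Phi$ and matching coefficients of $\la^{n-1}$ recovers the Jordan chain $c_n'' + V_0 c_n = c_{n-1}$; combined with the polynomial truncation selected in Lemma \ref{ACoefficientSymmetry} and the initial condition $c_1 = \psi_1$, an induction in $n$ forces $c_n = \psi_n$, and the analogous argument produces the identification $\tilde\Psi \leftrightarrow \{\psi_{-n}\}$ for the negative-index generating function.
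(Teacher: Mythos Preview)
Your proposal is correct and follows essentially the same route as the paper: both verify that $\Psi$ satisfies the Schr\"odinger equation \eqref{gensch}, invoke the large-argument asymptotics of $\Ai$ and $\Bi$ with $\tfrac{2}{3}w^{3/2}=\tfrac{1}{4\la}+\tfrac{3}{2}\ze^2+O(\la)$ to cancel the essential singularity against the prefactor $e^{\pm 1/(4\la)}$, match the leading term to $\psi_{\pm 1}$, and then observe that term-by-term the $\la$-expansion reproduces the Jordan chain.

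One small caveat: your final assertion that the polynomial truncation of Lemma~\ref{ACoefficientSymmetry} together with $c_1=\psi_1$ ``forces $c_n=\psi_n$'' by induction is slightly stronger than what actually holds. The paper notes immediately after the proposition that the coefficients extracted from the Airy expansion do \emph{not} coincide exactly with the $\psi_n$ of Proposition~\ref{psiproposition}: they differ by multiples of lower $\psi_k$ (e.g.\ the expansion gives a $\psi_{-2}$ with an extra constant term $\tfrac{5}{18}$). What the induction really yields is that each $c_n$ satisfies the Jordan chain and has the form $\ze^{1/2}e^{\mp 3\ze^2/2}\times(\text{polynomial in }\ze^2)$, hence is a legitimate generalized eigenfunction giving the same Wronskians $\theta_n$. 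This does not affect the validity of your argument, only the precision of its final sentence.
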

\begin{proof} 
Since (\ref{posgenfn}) is a solution of the 
Schr\"odinger equation (\ref{gensch}), we need to  
choose the coefficients $\tilde{A}(\la),\tilde{B}(\la)$ 
in %(\ref{Psigen}) 
\eqref{FirstLaxSolution} so that we have a solution with the correct asymptotic behaviour as $\la\to 0$, which means 
considering the asymptotics of the Airy 
functions as their argument goes to infinity. 
From the results in section $\S$9.7(ii) %chapter 9 
of \cite{dlmf}, 
the relevant  
asymptotic expansions are 
\beq\label{asyAi} 
\Ai\left( \Big(\frac{3\xi}{2}\Big)^{\frac{2}{3}}\right) 
\sim \frac{e^{-\xi}}
{ 2^{\frac{5}{6}} 3^{\frac{1}{6}}\sqrt{\pi}\,\xi^{\frac{1}{6}} }\left(1+\sum_{k=1}^\infty \frac{(-1)^ku_k}{\xi^k}\right),  \qquad 
\Bi\left( \Big(\frac{3\xi}{2}\Big)^{\frac{2}{3}}\right) 
\sim \frac{2^{\frac{1}{6}}e^{\xi}}
{  3^{\frac{1}{6}}\sqrt{\pi}\,\xi^{\frac{1}{6}} }\left(1+\sum_{k=1}^\infty \frac{u_k}{\xi^k}\right), 
\eeq 
each valid as $\xi\to\infty$ in suitable sectors of the 
$\xi$ plane (including the positive real axis), 
for a certain sequence of rational numbers $u_k$. 
In the case at hand, we have 
$$ 
\xi = \frac{1}{4\la}(1+4\la \ze^2)^\frac{3}{2} 
= \frac{1}{4\la}+\frac{3}{2}\ze^2 + O(\la) \quad 
\text{as} \,\, \la\to 0, 
$$ 
so to leading order we 
have 
$$ 
\Ai\left( \Big(\frac{3\xi}{2}\Big)^{\frac{2}{3}}\right) 
\sim c\,e^{-\frac{1}{4\la}-\frac{3}{2}\ze^2}\la^{\frac{1}{6}} \big(1+O(\la)\big), \quad 
\Bi\left( \Big(\frac{3\xi}{2}\Big)^{\frac{2}{3}}\right) 
\sim \tilde{c}\, e^{\frac{1}{4\la}+\frac{3}{2}\ze^2}\la^{\frac{1}{6}} \big(1+O(\la)\big), 
$$
for certain constants $c,\tilde{c}$.
Hence, if we choose $\tilde{A}(\la)$ to cancel out the leading order factors  
$e^{-\frac{1}{4\la}}\la^{\frac{1}{6}} $ in $\Ai$, as well as fixing the 
appropriate normalization, and set $\tilde{B}(\la)=0$ in (\ref{FirstLaxSolution}), then we find a solution $\Psi$ of 
the Schr\"odinger equation 
 given by (\ref{PositivePsiGen}), with leading order 
behaviour 
$$ 
\Psi = 3^{\frac{1}{4}}\zeta^{\frac{1}{2}}
e^{-\frac{3}{2}\ze^2} +o(1) \sim \psi_1 \quad \text{as}\,\,\la\to 0. 
$$
Moreover, because this $\Psi$ 
satisfies (\ref{gensch}), the coefficients of its expansion (\ref{posgenfn}) in powers of $\la$ generate 
precisely the positive Jordan chain (\ref{JordanChainpsi}) with $\delta =1$ (up to the choice of normalization 
constant $a$, which can be adjusted by rescaling $\la$). 
Furthermore, from the dependence of $\Psi$ on $\ze$ and the 
given asymptotic expansion of $\Ai$ at infinity, it is clear that the coefficient $\psi_n$ of  each power of $\la$ is a polynomial in $\ze^2$ multiplied by the 
prefactor $\zeta^{\frac{1}{2}}
e^{-\frac{3}{2}\ze^2}$, as required.
Similarly, by choosing $\tilde{A}(\la)=0$ and fixing 
$\tilde{B}(\la)$ appropriately, we obtain 
the solution (\ref{NegativePsiGen}) with 
$$ 
\tilde{\Psi} = 3^{\frac{1}{4}}\zeta^{\frac{1}{2}}
e^{\frac{3}{2}\ze^2} +o(1) \sim \psi_{-1} \quad \text{as}\,\,\la\to 0,  
$$
which generates the generalized eigenfunctions in the   negative Jordan chain (\ref{JordanChainpsi}) with $\delta =-1$. 
\end{proof}

Observe that, compared with (\ref{Psigen}), the factors 
$e^{\pm\frac{1}{4\lambda}}$ 
appearing in the generating functions $\Psi,\tilde{\Psi}$ above mean that these cannot also satisfy the second ($\la$ flow) part of the Lax pair for Painlev\'e III ($D_7$).  
Also, it is worth mentioning that 
the coefficients $\psi_i$ 
obtained by expanding these generating functions do not all correspond precisely to the generalized eigenfunctions determined in Proposition \ref{psiproposition}, due 
to the freedom to add on multiplies of lower terms in the Jordan chain at each step (which makes no difference to 
the sequence of their Wronskians). It is instructive to 
see how this works in the case of the negative part of  Jordan chain  (\ref{JordanChainpsi}) with $\delta =-1$. 
%Firstly, as $\lambda\rightarrow 0$ the argument of $\Bi$ goes to infinity. We can thus use 
The 
expansion of $\Bi$ in (\ref{asyAi}) can be rewritten in a more precise form with gamma functions, taking the argument $t\to\infty$ with $|\arg(t)|<\frac{\pi}{3}$, 
as follows:
$$ %\begin{equation}
    \Bi(t) \sim \frac{e^{\frac{2}{3}t^{\frac{3}{2}}}}{\sqrt{\pi}t^{\frac{1}{4}}} \sum^{\infty}_{n=0} \left[\frac{\Gamma(n+\frac{5}{6})\Gamma(n+\frac{1}{6})}{2\pi n!t^{\frac{3}{2}n}}\left( \frac{3}{4}\right)^n \right].
$$ %\end{equation} 
Using this expression allows the first few terms in 
the expansion of $\tilde{\Psi}$ as $\la\to 0$ to be found explicitly as 
\begin{equation}
    \tilde{\Psi} \sim 3^{\frac{1}{4}}\zeta^{\frac{1}{2}}e^{\frac{3}{2}\zeta^2}\left[1 + \left(\frac{3}{2}\zeta^4-\zeta^2+\frac{5}{18} \right)\lambda + \left(\frac{9}{8}\zeta^8-\frac{5}{2}\zeta^6+\frac{35}{12}\zeta^4-\frac{35}{18}\zeta^2 + \frac{385}{648} \right)\lambda^2 + O(\lambda^3) \right].
\end{equation}
The leading order term (coefficient of $\la^0$)  has been fixed as the normalized eigenfunction 
$\psi_{-1}=3^{\frac{1}{4}}\zeta^{\frac{1}{2}}
e^{\frac{3}{2}\ze^2}$, but reading off the next term, the   coefficient of 
$\la$ gives the generalized eigenfunction 
$$ 
\psi_{-2} =3^{\frac{1}{4}}\zeta^{\frac{1}{2}}
e^{\frac{3}{2}\ze^2} 
\left(\frac{3}{2}\zeta^4-\zeta^2+\frac{5}{18} \right), 
$$
which differs from the formula for $\psi_{-2}$ in (\ref{psinsol}) 
in two ways: first, by virtue of the fact that there the lowest coefficient   $A_{1,0}=0$, whereas above the final non-zero coefficient $\frac{5}{18}$ is included, 
corresponding to the freedom to add a multiple of
$\psi_{-1}$ to $\psi_{-2}$; and second, by an overall factor of $-\sqrt{3}$, since Theorem \ref{OhyamaWronskian} has been stated with the convention that $a=\sqrt{3}$ in \eqref{JordanChainpsi}, 
whereas the form of the equation \eqref{gensch} for $\Psi$ 
corresponds to the choice $a=-1$.  
Similar considerations 
apply to the form of $\psi_{-3}$, given by the 
coefficient of $\la^2$ in the above expansion 
of $\tilde{\Psi}$, which can be simplified by subtracting  linear combinations of $\psi_{-1}$ and $\psi_{-2}$. 

\begin{remark}
It is interesting to note  that the Airy function also appears in two different ways in the theory of Painlev\'e II, as observed in \cite{joshi2004generating}. 
Firstly, (\ref{pii}) has special classical solutions given in terms of Airy functions, when the parameter $\ell$ is an integer (so $\al$ is a half-integer). Secondly, the  rational solutions have a different 
determinantal representation from the one in 
Theorem \ref{piiratthm}, in terms of Hankel determinants, 
and in that context the Airy function $\Ai$ arises in the generating function of the entries of the Hankel matrix.  
We should also like to point out that yet another 
alternative representation for the tau functions of these rational solutions was found recently, in terms of Gram determinants \cite{ling}. Very recently, a broad class of Airy function solutions of KP was constructed, using Grammians \cite{ohta}.  
\end{remark}

\section{Conclusions} 

We have showed how the application of 
confluent Darboux transformations leads to Wronskian representations for algebraic solutions of both the Painlev\'e II equation and the Painlev\'e III ($D_7$) equation. It should be apparent from our presentation 
that there is nothing inherently special about the algebraic solutions, in the sense that every solution of  these equations is connected with  a Jordan chain of generalized eigenfunctions for a sequence of Schr\"odinger operators, linked to one another by the repeated action of a confluent Darboux  transformation. Hence every sequence of solutions of these Painlev\'e equations, related to one another by iterated BTs, admits a formal Wronskian representation. The reason we use the word ``formal'' here is that the generalized eigenfunctions, which appear as entries in the Wronksians, are typically higher transcendental functions, constructed from a Schr\"odinger operator whose potential is built out of Painlev\'e transcendents. So this is what distinguishes the algebraic solutions from the general case: for algebraic solutions,  the entries of the Wronskians can be expressed in closed from, and/or as polynomials, 
and similar considerations apply to classical solutions of Painlev\'e equations, such as the Airy %functions 
solutions of Painlev\'e II, which are expressed in terms of 
Wronskians of Airy functions and their derivatives \cite{okamoto}; but for the general 
transcendental solution, the seed eigenfunctions appear to be at least as complicated as the solution itself.

In recent work, which  will appear shortly, we present two more (different) determinantal representations for the algebraic solutions of the Painlev\'e III ($D_7$) equation. One of these new  representations 
is a Hankel determinant formula, 
generalizing known results on Hankel determinants for solutions of Painlev\'e II \cite{Kajiwara_1999}, 
that arises due to the connection of 
Painlev\'e III ($D_7$) with the Toda lattice, 
which we are able to exploit based on 
the bilinear equation  (\ref{tauToda}), 
using standard forms for  Toda solutions 
(see e.g. \cite{KajiwaraToda, kajiwara2007remark}).  
The other new determinantal representation for the 
Ohyama polynomials is an analogue of a determinant 
of Jacobi-Trudi type in terms of generalized Laguerre polynomials, 
which was presented to us in the form of a longstanding  conjecture 
by Kajiwara \cite{private}. A particular advantage of the latter 
representation is that the properties of the Ohyama polynomials in terms of the variable $s$ are made manifest. 
%(private communication). 

%\appendix

\section*{Appendix: Intertwining relations and Wronskians} 
%Form of $\theta_{n}$} 
%
\label{IntertwiningSection}

\renewcommand{\theequation}{A.\arabic{equation}}

%\begin{align}
 % &\chi_n''+(\lambda+V_n)\chi_n=0, \\
%\text{where}\,\,\,\,\,\, &V_n = V_0 + %2(\log(Wr(\psi_1,\dots,\psi_{n}))''. \label{ConfluentPotential}
%\end{align}

The purpose of this appendix is to collect together some algebraic facts about Jordan chains for confluent Darboux 
transformations. In particular, we 
focus on the specific role played by the 
Burchnall-Chaundy relation, and how it relates to the definition of the Jordan chain and the choice of  normalization for the generalized eigenfunctions. 
This is especially pertinent to Adler and Moser's construction of the rational solutions of KdV, as in section 3, and to our derivation of 
the Wronskians for the algebraic solutions of 
Painlev\'e III ($D_7$). 
While most of the facts presented here  
can probably be found somewhere in the extensive literature on Darboux transformations, 
it appears that the 
precise connection between the normalization 
of the Jordan chain and the Burchnall-Chaundy relation 
was not addressed in the recent works \cite{contreras2017recursive, schulze2013wronskian} on the confluent case. 

Suppose that we have a sequence of Schr\"odinger operators, labelled by $n\in\Z_{\geq 0}$, related to one 
another by the repeated application of confluent Darboux transformations to some initial operator. In order  
to eliminate some inconvenient minus signs, we use $H_n$ to denote the %a sequence of 
(formally self-adjoint) operator 
$$ 
H_n = -\left(\frac{\dd^2}{\dd z^2}+V_n\right).  
$$
So we 
%which we 
can regard each $H_n$ as the 
Hamiltonian operator for some 1D quantum mechanical system (with $-V_n$ being the potential energy, in that context). 

In order to construct the sequence of operators via the action of confluent Darboux transformations, we consider an eigenfunction $\phi_0$ for the initial operator $H_0$, that is  
$$ 
H_0\,\phi_0 = 0, 
$$ 
with eigenvalue zero. (Here 
%where here 
we consider only the case of repeated eigenvalue $\mu=0$, because that is the only case of interest in
the rest of the paper, but the general case requires only minor modifications; cf.\ section 2.) 
Then we have the factorization 
$$ 
H_0 = L_0^\dagger L_0, \qquad 
L_0=\frac{\dd }{\dd z} -\frac{\phi_0'}{\phi_0}, 
\quad  L_0^\dagger=-\frac{\dd }{\dd z} -\frac{\phi_0'}{\phi_0},
$$
so that $\phi_0$ spans the kernel of the first order operator $L_0$, i.e. 
$$L_0\, \phi_0=0, $$
and the next operator is produced by reversing the order of factorization: 
$$
H_1 = L_0 L_0^\dagger. 
$$ 
So far we just have a standard (single) Darboux transformation, and by construction the reciprocal of 
the original eigenfunction is an eigenfunction of the new operator, that is 
$$ 
H_1 \, (\phi_0)^{-1} =0. 
$$ 
Then the main question is how to iterate the construction and obtain a sequence of eigenfunctions $\phi_n$ for each operator, with the same eigenvalue (in this case, zero), 
when the standard formula \eqref{basetransform} for 
generating a new eigenfunction breaks down, and the usual Crum formula is no longer valid.

The answer is to build $\phi_n$ from a sequence of generalized eigenfunctions for $H_0$, denoted $\psi_n$, which arise in the  following way. Starting with the 
original eigenfunction of $H_0$, that is  
$$\phi_0 = \psi_1,$$ 
we define the new eigenfunction for $H_1$ to be 
$$ \phi_1=L_0 \,\psi_2,  
$$
so we find 
$$ H_1\, \phi_1 = L_0L_0^\dagger\,\phi_1 = L_0 L_0^\dagger L_0\, \psi_2=L_0 H_0\,\psi_2  \implies H_0\,\psi_2 \in \mathrm{ker} L_0.  
$$
Hence $H_0\psi_2$ is a multiple of $\phi_0=\psi_1$, which gives the generalized eigenfunction equation 
$$ 
\left(\frac{\dd^2}{\dd z^2}+V_0\right)^2\psi_2 =
H_0^2 \,\psi_2 =0. 
$$

However, we can say more: for a non-trivial result at the next stage of iteration, we require that $\phi_1$ should be independent of $\phi_0^{-1}$, the other eigenfunction of $H_1$ that we 
already know; 
thus  their Wronskian is non-zero: %condition
\beq\label{wr1}
Wr\big( \phi_1, (\phi_0)^{-1}\big) 
= C_1\neq 0.
\eeq 
However, we can rewrite the latter condition in terms of 
operators, as 
$$ 
(\phi_0)^{-1}\, L_0^\dagger \, \phi_1 = C_1 
\implies  L_0^\dagger \, \phi_1 = C_1 \, \phi_0, $$
and replacing $\phi_1$ and $\phi_0$ in terms of $\psi_2$ and $\psi_1$, respectively, this becomes 
$$  L_0^\dagger L_0 \, \psi_2 =C_1\, \psi_1 
\iff H_0 \, \psi_2  =C_1\, \psi_1 , 
$$
which is precisely the Jordan chain condition 
\eqref{Jchain}, but with an arbitrary choice of non-zero 
normalizing constant $C_1$, 
coming from the Wronskian \eqref{wr1}; the formulae in sections 2 and 3 %are based on 
involved the particular 
choice $C_1=-1$, %taken  was  
whereas in section 4, in the context of the Ohyama polynomials, the choice $C_1=\sqrt{3}$ was made.

For subsequent eigenfunctions, we proceed by induction. For each $n$, we have a Schr\"odinger operator factorized in two different ways, that is 
\beq \label{Hfac}
H_n  = L_{n-1}L_{n-1}^\dagger= L_n^\dagger L_n, 
\eeq 
which gives the standard intertwining relation 
\beq\label{twine} 
L_{n-1}H_{n-1} = H_n L_{n-1}, 
\eeq 
and at each iteration of the confluent 
Darboux transformation, the new eigenfunction is defined by 
\beq\label{phindefn} 
\phi_n = M_n \, \psi_{n+1}, \qquad \text{where} \quad M_n =L_{n-1}L_{n-2}\cdots L_0. 
\eeq 
By construction, the reciprocal of the 
eigenfunction from the previous step satisfies 
$$ 
L_{n-1}^\dagger\, (\phi_{n-1})^{-1}=0 \implies 
H_n\, (\phi_{n-1})^{-1} = 0 
$$
by (\ref{Hfac}), and we require that the new eigenfunction should be independent, with the Wronskian   
\beq\label{wrn}
Wr\big( \phi_n, (\phi_{n-1})^{-1}\big) = C_n, 
\eeq 
for arbitrary $C_n\neq 0$. Similarly to (\ref{wr1}), 
the latter condition can be rewritten as the operator equation 
\beq\label{nextldag} 
L_{n-1}^\dagger \, \phi_n = C_n\, \phi_{n-1},  
\eeq 
and, after  applying $L_{n-1}$ to both sides, this implies 
$$
L_{n-1}L_{n-1}^\dagger \, \phi_n =  L_{n-1} (  C_n\,\phi_{n-1}) \implies 
 H_n\, \phi_n = 0 . 
$$
Furthermore, by \eqref{phindefn} we can rewrite \eqref{nextldag} as   
$$ 
L_{n-1}^\dagger M_n\, \psi_{n+1} = C_n \, M_{n-1}\, \psi_n, 
$$
and then note that, by repeated application 
of the intertwining relation (\ref{twine}), we have
$$
\begin{array}{rcl}
   L_{n-1}^\dagger M_n & = & L_{n-1}^\dagger L_{n-1} L_{n-2}\cdots L_{0} \\ 
   & =& H_{n-1}M_{n-1}\\ 
   & =& L_{n-2}H_{n-2}M_{n-2}\\ 
   & =& L_{n-2}L_{n-3}H_{n-3}M_{n-3}\\ 
   & = &\ldots \\
   & = & M_{n-1}H_0, 
\end{array}
$$
which means that another consequence of \eqref{nextldag} 
is the equation 
$$
M_{n-1}H_0\, \psi_{n+1} = C_n\, M_{n-1}\, \psi_n, 
$$
or equivalently 
\beq\label{kerm}
H_0\, \psi_{n+1} - C_n\, \psi_n \in \ker M_{n-1}. 
\eeq 
Also note that, at the next step of the Darboux transformation, we have 
\beq\label{Mker} 
L_n\, \phi_n =0 \implies L_n M_{n-1}\, \psi_{n+1} 
= M_{n}\, \psi_{n+1} =0. 
\eeq

The statement (\ref{kerm}), for each $n$, is the most general formulation 
of the Jordan chain condition, which for $i=n+1$ includes \eqref{Jchain} as a special case. To understand the general condition, observe that 
$$M_n=\left(\frac{\dd}{\dd z} -\frac{\phi_{n-1}'}{\phi_{n-1}}\right) \cdots 
\left(\frac{\dd}{\dd z} -\frac{\phi_{0}'}{\phi_{0}}\right) 
=\frac{\dd^n}{\dd z^n}+\cdots 
$$ 
is a differential 
operator of order $n$, and we claim that its action on 
any function $\chi$ is given as a ratio of Wronskians, by 
the formula 
\beq\label{mwro}
M_{n}\, \chi =\frac{Wr(\psi_1,\ldots,\psi_n,\chi)}
{Wr(\psi_1,\ldots,\psi_n)}. 
\eeq 
To see why the above formula is correct, observe that from
$M_{n} = L_{n-1}M_{n-1}$ and \eqref{Mker} it is clear by induction that $\mathrm{ker}M_{n}$ is spanned by $\psi_1,\ldots,\psi_n$, and the Wronskian in the numerator vanishes whenever $\chi$ is a linear combination 
of these functions; moreover, the linear operator 
defined by (\ref{mwro}) also has
the same 
leading term as $M_n$, so they must be the same.  It follows from this that the most general 
way to satisfy the condition (\ref{kerm}) is to take, at each step,  
\beq\label{jocha} 
H_0\, \psi_{n+1} = C_n \, \psi_n + \sum_{j=1}^{n-1} 
c_{n,j}\, \psi_j, 
\eeq 
with additional arbitrary constants $c_{n,j}$.  
However, while the freedom to add these extra terms is always present, it can be absorbed into a redefinition 
of $\psi_n$ on the right-hand side of (\ref{jocha}), 
which does not affect the values of the Wronskians.

Finally, by combining (\ref{phindefn}) with (\ref{mwro}),  
it follows that the eigenfunctions of each 
Schr\"odinger operator $H_n$ are given by the ratio of Wronskians \eqref{Jordanchi}, that is 
$$
\phi_n=\frac{\theta_{n+1}}{\theta_n}, 
\qquad \mathrm{with} 
\quad \theta_n = Wr(\psi_1,\ldots,\psi_n), 
$$
and the Wronskian condition \eqref{wrn} for the pair of 
eigenfunctions obtained at each step is 
equivalent to the Burchnall-Chaundy relation in the form 
$$ 
\theta_{n-1}'\theta_{n+1}-\theta_{n-1}\theta_{n+1}' 
=C_n\, \theta_{n}^2,  
$$
where the normalization constant $C_n\neq 0$ can be freely chosen for each $n$.

%\bibliographystyle{unsrt}
%\bibliography{References}

\end{document}